\newcommand{\note}[1]%
{\noindent\centerline{\fbox{\parbox{.9\textwidth}{\textbf{#1}}}}}
\newcommand{\snote}[1]%
{\fbox{\textbf{#1}}}
\def\proof{\par{\noindent\it Proof}. \ignorespaces}
\def\endproof{\vbox{\hrule height0.6pt\hbox{%
   \vrule height1.3ex width0.6pt\hskip0.8ex
   \vrule width0.6pt}\hrule height0.6pt}}
\begin{document}

\title{Adaptive Fourier-Galerkin Methods}

\author{Claudio Canuto$^a$, Ricardo H. Nochetto$^b$ and Marco Verani$^c$}

\date{January 26, 2012}

\maketitle 

%%%%%%%%%%%%%% ADDRESSES %%%%%%%%%%%
\begin{center}
{\small
$^a$ Dipartimento di Scienze Matematiche, Politecnico di Torino\\
        Corso Duca degli Abruzzi 24, 10129 Torino, Italy\\
	E-mail: {\tt claudio.canuto@polito.it}\\
\vskip 0.1cm
%%%%%%%%%%%%%%%%%%%%%%%%%%%%%%%%
Department of Mathematics and Institute for Physical Science
  and Technology,\\ University of Maryland, College Park, MD 20742, USAy\\
E-mail: {\tt rhn@math.umd.edu} 
\vskip 0.1cm
%%%%%%%%%%%%%%%%%%%%%%%%%%%%%%%%%
$^c$ MOX, Dipartimento di Matematica, Politecnico di Milano\\
Piazza Leonardo da Vinci 32, I-20133 Milano, Italy\\
E-mail: {\tt marco.verani@polimi.it}\\
}
\end{center}

\begin{abstract}
\noindent We study the performance of adaptive Fourier-Galerkin methods in a
periodic box in $\mathbb{R}^d$ with dimension $d\ge1$. These methods offer
unlimited approximation power only restricted by solution and data 
regularity. They are of intrinsic interest but are also a first step
towards understanding adaptivity for the $hp$-FEM.
We examine two nonlinear approximation classes, one
classical corresponding to algebraic decay of Fourier coefficients
and another associated with
exponential decay. We study the sparsity classes of the residual and
show that they are the same as the solution for the algebraic class but
not for the exponential one. This possible sparsity degradation for
the exponential class can be
compensated with coarsening, which we discuss in detail. We present
several adaptive Fourier algorithms, and prove their contraction 
and optimal cardinality properties.\\
\\
\textbf{Keywords:} Spectral methods, adaptivity, convergence, optimal cardinality.
\end{abstract}

%\tableofcontents

%%%%%%%%%%%%%%%%%%%%%%%%%%%%%%%%%%%%%%%%%%%%%%%%%%%%%%%%%%%%%%%%%%%%%%%%%%%%%%%%%%%%%%
\section{Introduction}\label{S:intro}
%%%%%%%%%%%%%%%%%%%%%%%%%%%%%%%%%%%%%%%%%%%%%%%%%%%%%%%%%%%%%%%%%%%%%%%%%%%%%%%%%%%%%%
%
Adaptivity is now a fundamental tool in scientific and engineering
computation. In contrast to the practice, which goes
back to the 70's, the mathematical theory for multidimensional problems is
rather recent. It started in 1996 with the convergence
results by D\"orfler \cite{dorfler:96} and Morin, Nochetto, and Siebert \cite{MNS:00}.
The first convergence rates were derived by Cohen, Dahmen, and DeVore
\cite{CDDV:1998} for wavelets in any dimensions $d$, and for finite element
methods (AFEM)
by Binev, Dahmen, and DeVore \cite{BDD:04} for
$d=2$ and Stevenson \cite{Stevenson:2007} for any $d$. 
The most comprehensive results
for AFEM are those of Casc\'on, Kreuzer, Nochetto,
and Siebert \cite{Nochetto-et-al:2008} for any $d$ and $L^2$ data,
and Cohen, DeVore, and Nochetto \cite{CDN:11} for $d=2$ and $H^{-1}$
data; we refer to the survey \cite{NSV:09} by Nochetto, Siebert and Veeser.
This theory is quite satisfactory in that it shows that 
AFEM delivers a convergence rate compatible with that 
of the approximation classes where the solution and data 
belong. The recent results in \cite{CDN:11} reveal that it is the
approximation class of the solution that really matters. In all cases
though the convergence rates are limited by the approximation power of
the method (both wavelets and FEM), which is finite and related to the
polynomial degree of the basis functions, and the regularity of the
solution and data. The latter is always measured in an {\it algebraic} 
approximation class.

In contrast very little is known for methods with infinite
approximation power, such as those based on Fourier analysis. We
mention here the results of DeVore and Temlyakov \cite{DeVore-Temlyakov:1995} for
trigonometric sums and those of Binev et al \cite{BCDDPW:10} for the reduced
basis method. A close relative to Fourier methods is the so-called
$p$-version of the FEM (see e.g.  \cite{Schwab:1998} and
\cite{CHQZ2:2006}), 
which uses Legendre polynomials instead of
exponentials as basis functions. The purpose of this paper is to
present {\it adaptive Fourier-Galerkin methods (ADFOUR)}, and discuss 
their convergence and optimality properties. We do so in the context
of both {\it algebraic} and {\it exponential} approximation classes,
and take advantage of the orthogonality inherent to complex
exponentials. We believe that this approach can be extended to
the $p$-FEM. We view this theory as a first step towards understanding 
adaptivity for the
$hp$-FEM, which combines mesh refinement ($h$-FEM) with polynomial
enrichment ($p$-FEM) and is much harder to analyze.

Our investigation reveals some striking differences between ADFOUR and
AFEM and wavelet methods. 
The basic assumption, underlying the success of adaptivity, is that
the information read in the residual is quasi-optimal for either mesh design
or choosing wavelet coefficients for the actual solution.
This entails that the sparsity classes of the residual and the
solution coincide. We briefly illustrate below, and fully discuss
later in Sect. \ref{sec:spars-res},
that this basic premise is false for exponential classes even though
it is true for algebraic classes.
Confronted with this unexpected fact, we have no alternative but to 
implement and study ADFOUR with {\it coarsening} for the exponential
case; see Sect. \ref{S:coarsening} and Sect. \ref{sec:adfour-coarse}. 
This was the original idea of Cohen et al \cite{CDDV:1998} 
and Binev et al \cite{BDD:04} for
the algebraic case, but it was subsequently removed by Stevenson
\cite{Stevenson:2007}. 

We give now a brief description of the essential issues we are
confronted with in designing and studying ADFOUR. To this end, we 
assume that we know the Fourier representation 
$\bv=\{v_k\}_{k\in\mathbb{Z}}$ of a periodic function $v$, 
and its non-increasing rearrangement $\bv^*=\{v_n^*\}_{n=1}^\infty$, namely,
$|v_{n+1}^*|\le |v_n^*|$ for all $n\ge 1$. 

%--------------------------------------------------------------------------------
\bigskip\noindent
{\bf D\"orfler marking and best $N$-term approximation.}
We recall the marking introduced by D\"orfler \cite{dorfler:96}, which is the 
only one for which there exist provable convergence rates. Given a 
parameter $\theta\in (0,1)$, and a current set of Fourier frequencies
or indices 
$\Lambda$, say the first $N$ ones according to the labeling of $\bv$, 
we choose the next set
$\partial\Lambda$ as the {\it minimal} set for which
\begin{equation}\label{dorfler}
\|P_{\partial\Lambda} \br\| \ge \theta \|\br\|,
\end{equation}
where $\br := \bv - P_\Lambda \bv$ is the {\it residual}
and $P_\Lambda$ is the orthogonal projection in the $\ell^2$-norm
$\|\cdot\|$ onto
$\Lambda$. Note that, if $\br_*:= \br - P_{\partial\Lambda}\br$ and
$\Lambda_* := \Lambda\cup\partial\Lambda$, then
\eqref{dorfler} can be equivalently written as
\begin{equation}\label{dorfler-2}
\|\br_*\| = \|\br - P_{\partial\Lambda}\br\| \le \sqrt{1-\theta^2} \|\br\|,
\end{equation}
and that $\br = \bv|_{\Lambda^c}$ 
where $\Lambda^c := \mathbb N \backslash \Lambda$ is the complement of 
$\Lambda$ and likewise for $\br_*$. This is the
simplest possible scenario because the information built in $\br$ is
exactly that of $\bv$. Moreover, $\bv-\br=\{v_n^*\}_{n=1}^N$ is the best
$N$-term approximation of $\bv$ in the $\ell^2$-norm and the corresponding
error $E_N(v)$ is given by
\begin{equation}\label{N-term}
E_N(v) = \Big( \sum_{n>N} |v_n^*|^2 \Big)^{-\frac12} = \|\br\|.
\end{equation}

%-----------------------------------------------------------------------------------
\noindent
{\bf Algebraic vs exponential decay.}
Suppose now that $\bv$ has the precise {\it algebraic} decay\footnote{Throughout the paper, 
$A \lsim B$ means $A \le c \, B$ for some constant $c>0$ 
independent of the relevant parameters in the inequality; $A \simeq B$ means 
$B \lsim A \lsim B$.} 
\begin{equation}\label{algebraic}
|v_n^*| \simeq n^{-\frac{1}{\tau}} \quad\forall\, n\ge1.
\end{equation}
with 
\begin{equation}\label{tau}
\frac{1}{\tau} = \frac{s}{d} + \frac{1}{2}
\end{equation}
and $s>0$. We denote by $\|\bv\|_{\ell^s_B}$ the smallest constant in 
the upper bound in \eqref{algebraic}.
We thus have
\[
E_N(v)^2 \simeq \|\bv\|_{\ell^\tau_w}^2 \sum_{n>N} n^{-\frac{2}{\tau}}
=  \|\bv\|_{\ell^s_B}^2 \sum_{n>N} n^{-\frac{2s}{d}-1}
\simeq  \|\bv\|_{\ell^s_B}^2 N^{-\frac{2s}{d}}.
\]
This decay is related to certain {\it Besov} regularity of $v$ \cite{DeVore-Temlyakov:1995}.
Note that the effect of D\"orfler marking \eqref{dorfler-2} is to
reduce the residual from $\br$ to $\br_*$
by a factor $\alpha = \sqrt{1-\theta^2}$, or equivalently
\[
E_{N_*}(v) \le  \alpha E_N(v),
\]
with $N_* = |\Lambda_*|$. Since the set $\Lambda_*$ is minimal, we
deduce that $E_{N_*-1}(v) > \alpha E_N(v)$, whence
\begin{equation}\label{bulk-1}
\frac{N_*}{N} \simeq \alpha^{-\frac{d}{s}} 
\quad\Rightarrow\quad
N_* - N \simeq \alpha^{-\frac{d}{s}} N
\end{equation}
for $\alpha$ small enough. This means that the number of degrees of freedom to be added is
proportional to the current number. This simplifies considerably the complexity
analysis since every step adds as many degrees of freedom as we
have already accumulated.

The exponential case is quite different. Suppose that $\bv$
has a {\it genuinely exponential} decay
\begin{equation}\label{gen-exp}
|v_n^*| \simeq  e^{-\eta n}
\quad\forall\, n\ge 1,
\end{equation}
corresponding to analytic functions
\cite{Foias-Temam:1989}, and let $\|\bv\|_{\ell^{\eta}_G}$ be the
smallest constant appearing in the upper bound in
  \eqref{gen-exp}. These definitions are slight simplifications of the 
actual ones in Sect. \ref{sec:nlg} but enough to give insight on the main
issues at stake.
 We thus have
\begin{equation*}
E_N(v)^2 \simeq  \|\bv\|_{\ell^{\eta}_G}^2 \sum_{n>N} 
e^{-2\eta n} \simeq \|\bv\|_{\ell^{\eta}_G}^2 e^{-2\eta N};
\end{equation*}
this and similar decays are related to {\it Gevrey} classes of 
$C^\infty$ functions \cite{Foias-Temam:1989}. In contrast to
\eqref{bulk-1}, D\"orfler marking now yields\footnote{Throughout 
the paper, $A \sim B$ means $A=B+c$ for some quantity $c \simeq 1$.}
\begin{equation}\label{bulk-2}
N_* - N \sim \frac{1}{\eta} \log \frac{1}{\alpha}.
\end{equation}
This shows that the number of additional degrees of freedom per step
is fixed and independent of $N$, which makes their counting 
as well as their implementation a very delicate operation.

%------------------------------------------------------------------------------------
\bigskip\noindent
{\bf Plateaux.} We now consider a situation opposite to the ideal
decay examined above.
Suppose that the first $K>1$ Fourier coefficients of $v$ are constant and either
\begin{equation}\label{plateaux}
|v_n^*| = \|\bv\|_{\ell^s_B} n^{-\frac{1}{\tau}}\quad\text{\rm or}\quad
|v_n^*| = \|\bv\|_{\ell^{\eta}_G} e^{-\eta n}
\quad\forall\, n\ge K,
\end{equation}
for each approximation class. A simple calculation reveals
that either
\begin{equation}
\|\bv\| \simeq  \|\bv\|_{\ell^s_B} K^{-s/d}
\quad\text{\rm or}\quad
\|\bv\| \simeq \|\bv\|_{\ell^{\eta}_G} e^{-\eta K}.
\end{equation}
Repeating the argument leading to \eqref{bulk-1} and \eqref{bulk-2}
with $N=1$, we infer that either
\begin{equation}\label{bulk-3}
N_* \simeq K \alpha^{-\frac{d}{s}}
\quad\text{\rm or }\quad
N_* \sim K + \frac{1}{\eta}\log\frac{1}{\alpha}.
\end{equation}
For $K\gg1$ this is a much larger number than the optimal values 
\eqref{bulk-1} and \eqref{bulk-2}, and illustrates the fact that the
D\"orfler condition \eqref{dorfler} adds many more frequencies in the
presence of plateaux. We note that $K$ is a multiplicative constant
in the left of \eqref{bulk-3} and additive in the right of \eqref{bulk-3}.

%------------------------------------------------------------------------------------
\bigskip\noindent
{\bf Sparsity of the residual.}
In practice we do not have access to the Fourier decomposition of $v$ 
but rather of the residual $r(v)=f-Lv$, where $f$ is the forcing
function and $L$ the differential operator. Only an operator $L$ with constant
coefficients leads to a spectral representation with diagonal matrix $\bA$,
in which case the components of the residual $\br = \mathbf{f} - \bA\bv$ are
directly those of $\mathbf{f}$ and $\bv$. In general $\bA$ decays
away from the main diagonal with a law that depends on the regularity
of the coefficients of $L$; we will examine in Sect. \ref{S:properties-A}
either algebraic or exponential decay. In this much more intricate and
interesting endeavor, studied in this paper,
the components of $\bv$ interact with entries of
$\bA$ to give rise to $\br$.
The question whether $Lv$ belongs to the same approximation class of
$v$ thus becomes relevant because adaptivity decisions are made
  with $r(v)$, and thereby on the range of $L$ rather than its domain.

We now provide insight on the key issues at stake via a couple of
heuristic examples; we discuss this fully in
Sect. \ref{S:algebraic-case} and Sect. \ref{subsec:spars-res-exp}. We
start with the exponential case: let $\bv:=\{v_k\}_{k\in\mathbb{Z}}$ be defined
by
\[
v_k = e^{-\eta n} \quad\textrm{if}\quad k = 2p(n-1),
\qquad
v_k = 0 \quad\textrm{otherwise},
\]
for $p\ge2$ a given integer and $n\ge1$. This sequence exhibits gaps
of size $2p$ between consecutive nonzero entries for $k\ge0$. Its non-decreasing
rearrangement $\bv^*=\{v_n^*\}_{n=1}^\infty$ is thus given by
\[
v_n^* = e^{-\eta n} \quad n\ge 1,
\]
whence $\bv\in\ell^\eta_G$ with $\|\bv\|_{\ell^\eta_G}=1$. Let
$\bA:=(a_{ij})_{i,j=1}^\infty$ be the Toeplitz bi-infinite matrix given by
\[
a_{ij} = 1\quad\textrm{if } |i-j| \le q,
\qquad
a_{ij} = 0\quad\textrm{otherwise},
\]
with $1\le q<p$. This matrix $\bA$ has $2q+1$ main nontrivial diagonals and is
both of exponential and algebraic class according to the Definition
\ref{def:class.matrix} below. The product $\bA\bv$ is much less sparse
than $\bv$ but, because $q<p$, consecutive frequencies of $\bv$ do not
interact with each other: the $i$-th component reads
\[
(\bA\bv)_i = e^{-\eta n}\quad\textrm{if}\quad \big| i-2p(n-1)\big| \le q
\quad\textrm{for some}\quad n\ge 1,
\]
or $(\bA\bv)_i=0$ otherwise. The non-decreasing rearrangement
$(\bA\bv)^*$ of $\bA\bv$ becomes
\[
(\bA\bv)^*_m = e^{-\eta n} \quad\textrm{if}\quad
(2q+1)(n-1) + 1 \le m \le (2q+1) n.
\]
Consequently, writing $(\bA\bv)^*_m = e^{-\eta \frac{n}{m} m}$ and
observing that
\[
\frac{n}{m} \ge \frac{n}{(2q+1)n} = \frac{1}{2q+1}
\]
and the equality is attained for $m=(2q+1)n$, we deduce 
\[
\bA\bv \in \ell^{\bar\eta}_G \quad\textrm{with}\quad
\|\bA\bv\|_{\ell^{\bar\eta}_G} =1
\quad \bar\eta = \frac{\eta}{2q+1}.
\]
We thus conclude that the action of $\bA$ may
shift the exponential class, from the one characterized by the parameter $\eta$ for
$\bv$ to the one characterized by $\bar\eta<\eta$ for $\bA\bv$.
This uncovers the crucial feature that the image $\bA\bv$ of $\bv$ may be
substantially less sparse than $\bv$ itself.
In Sect. \ref{subsec:spars-res-exp} we present a rigorous construction
with $a_{ij}$ decreasing exponentially from the main diagonal and another,
rather sophisticated, construction that illustrates the fact that the exponent
$\tau = 1$  in the bound $|v_n^*| \lsim  e^{-\eta n} = e^{-\eta n^\tau}$ for $\bv$
may deteriorate to some $\bar\tau < 1$ in the corresponding bound for $\bA\bv$.

It is remarkable that a similar construction for the algebraic decay 
would not lead to a change of algebraic class. In fact, let
$\bv=\{v_k\}_{k\in\mathbb{Z}}$ be given by
\[
v_k = \frac{1}{n} \quad\textrm{if}\quad
k = 2p(n-1) \quad\textrm{for some}\quad
n\ge1,
\]
and $v_k=0$ otherwise. The non-decreasing rearrangement
$\bv^*=\{v_n^*\}_{n=1}^\infty$ of $\bv$ satisfies $v^*_n=\frac{1}{n}$
whence
\[
\bv\in \ell^s_B \quad\textrm{with}\quad
s = \frac{d}{2} \quad \|\bv\|_{\ell^s_B}=1.
\]
On the other hand, the $i$-th component of $\bA\bv$ reads
\[
(\bA\bv)_i = \frac{1}{n}\quad\textrm{if}\quad
\big|i-2p(n-1)\big| \le q 
\quad\textrm{for some}\quad
n\ge 1,
\]
or $(\bA\bv)_i=0$ otherwise. The non-decreasing rearrangement of
$(\bA\bv)^*$ in turn satisfies
\[
(\bA\bv)^*_m = \frac{1}{n} \quad\textrm{if}\quad
(2q+1)(n-1) + 1 \le m \le (2q+1) n,
\]
whence writing $(\bA\bv)^*_m = \frac{m}{n} \frac{1}{m}$ and arguing as
before we infer that
\[
\bA\bv\in \ell^s_B \quad\textrm{with}\quad
\|\bA\bv\|_{\ell^s_B} = 2q+1.
\]
Since  $\|\bA\bv\|_{\ell^s_B} > \|\bv\|_{\ell^s_B}$ we realize that 
$\bA\bv$ is less sparse than $\bv$ but, in contrast to the exponential
case, they belong to the same algebraic class $\ell^s_B$.
Moreover, we will
prove later in Sect. \ref{S:algebraic-case} that $\bA$ preserves the
class $\ell^s_B$ provided entries of $\bA$ possess a suitable algebraic decay
away from the main diagonal.

Since D\"orfler marking is applied to the residual $\br$, it is its
sparsity class that determines the degrees of freedom
$|\partial\Lambda|$ to be added. The same
argument leading to either \eqref{bulk-1} or \eqref{bulk-2} gives
\[
|\partial\Lambda| \le
\Big(\frac{\|\br\|_{\ell^s_B}}{\alpha\|\br\|}\Big)^{\frac{d}{s}} + 1
\qquad\text{\rm or }\qquad
|\partial\Lambda| \le \frac{1}{\eta}
\log 
\frac{\|\br\|_{\ell^{\eta}_G}}{\alpha\|\br\|} +1,
\]
for each class.
We thus see that the ratios $\|\br\|_{\ell^s_B}/\|\br\|$ and
$\|\br\|_{\ell^{\eta}_G}/\|\br\|$
control the behavior of the adaptive procedure. This has already been
observed and exploited by Cohen et al \cite{CDDV:1998} in the context
of wavelet methods for the class $\ell^s_B$. 
Our estimates, discussed in Sect. \ref{sec:spars-res},
are valid for both classes and use specific decay properties of
the entries of $\bA$. 

%------------------------------------------------------------------------------------
\bigskip\noindent
{\bf Coarsening.} Ever since its inception by Cohen et al
\cite{CDDV:1998} and Binev et al \cite{BDD:04}, 
this has been a controvertial issue for elliptic
PDE. It was originally due to the lack of control on the ratio
$\|\br\|_{\ell^s_B}/\|\br\|$ for large $s$ \cite{CDDV:1998}. It was removed by
Stevenson et al \cite{Gantumur-Stevenson:2007,Stevenson:2007}
for the algebraic class $\ell^s_B$
via a clever argument that exploits the minimality of D\"orfler
marking. This implicitly implies that the approximation classes for
both $v$ and $Lv$ coincide, which we prove explicitly in 
Sect. \ref{S:algebraic-case} for the algebraic case. This is not true though
for the exponential case and is discussed in Sect. \ref{subsec:spars-res-exp}.
For the latter, we need to resort to {\it coarsening} to keep the
cardinality of ADFOUR quasi-optimal. To this end, we 
{construct an insightful example 
in Sect. \ref{S:coarsening} and prove} a rather simple but
sharp coarsening estimate which improves upon \cite{CDDV:1998}.

%------------------------------------------------------------------------------------
\bigskip\noindent
{\bf Contraction constant.} It is well known that the contraction
constant $\rho(\theta) = \sqrt{1 - \frac{\alpha_*}{\alpha^*}\theta^2}$
cannot be arbitrarily close to $1$ for estimators whose upper and
lower constants, $\alpha^*\ge\alpha_*$, do not coincide. This is,
however, at odds with the philosophy of spectral methods which are
expected to converge superlinearly (typically exponentially). Assuming
that the decay properties of $\bA$ are known, we can enrich D\"orfler
marking in such a way that the contraction factor becomes
\[
\bar\rho(\theta) = \Big(\frac{\alpha^*}{\alpha_*}\Big)^{\frac12}
\sqrt{1-\theta^2}.
\]
This leads to $\bar\rho(\theta)$
as close to $1$ as desired and to {\it aggressive} versions
of ADFOUR discussed in Sect. \ref{sec:plain-adapt-alg}.

\medskip
This paper can be viewed as a first step towards understanding
adaptivity for the $hp$-FEM. However, the results we present are of 
intrinsic interest and of value for periodic problems with high degree
of regularity and rather complex structure. One such problem is
turbulence in a periodic box. Our techniques exploit periodicity and
orthogonality of the complex exponentials, but many of our assertions
and conclusions extend to the non-periodic case for which the natural
basis functions are Legendre
polynomials; this is the case of the $p$-FEM. In any event, the study of
adaptive Fourier-Galerkin methods seems to be a new paradigm in
adaptivity, with many intriguing questions and surprises, some
discussed in this paper. In contrast to the $h$-FEM,
they exhibit unlimited approximation power which is
only restricted by solution and data regularity.

We organize the paper as follows. In Sect. \ref{sec:gen} we introduce the
Fourier-Galerkin method, present a posteriori error estimators,
and discuss properties of the underlying matrix $\bA$ for both
algebraic and exponential
approximation classes. In Sect. \ref{sec:plain-adapt-alg} we deal with four algorithms,
two for each class, and prove their contraction properties. We devote
Sect. \ref{sec:nl} to nonlinear approximation theory with an emphasis on 
the exponential class. In Sect. \ref{sec:spars-res} we turn to the study
of the sparsity classes for the residual $\br$ along the lines
outlined above. We examine the role of coarsening and prove a sharp
coarsening estimate in Sect. \ref{S:coarsening}. We conclude with
optimality properties of ADFOUR for the algebraic class in Sect.
\ref{sec:complexity} and for the exponential class in Sect. \ref{sec:adfour-coarse}. 

%%%%%%%%%%%%%%%%%%%%%%%%%%%%%%%%%%%%%%%%%%%%%%%%%%%%%%%%%%%%%%%%%%%%%%%%%%%%%%%%%%%%%%
\section{Fourier-Galerkin approximation}\label{sec:gen}
%%%%%%%%%%%%%%%%%%%%%%%%%%%%%%%%%%%%%%%%%%%%%%%
%------------------------------------------------------------------------------------------------------
\subsection{Fourier basis and norm representation}
%------------------------------------------------------------------------------------------------------

For $d \geq 1$, we consider $\Omega=(0,2\pi)^d$, and the trigonometric basis 
$$
\phi_k(x)=\frac1{(2\pi)^{d/2}} \, {\rm e}^{i k \cdot x} \;, 
\qquad k \in \mathbb{Z}^d \;, \quad x \in \mathbb{R}^d \;,
$$
which is orthonormal in $L^2(\Omega)$; let 
$$
v = \sum_k \hat{v}_k \phi_k \;, \quad \hat{v}_k=(v,\phi_k) \;, 
\qquad \text{with } \ \Vert v \Vert_{L^2(\Omega)}^2= \sum_k |\hat{v}_k|^2 \;,
$$
be the expansion of any $v \in L^2(\Omega)$ and the representation of its norm via the Parseval 
identity. Let $H^1_p(\Omega)=\{v \in H^1(\Omega) \, : \, v(x+2\pi e_j)=v(x) \ 1 \leq j \leq d\}$, 
and let $H^{-1}_p(\Omega)$ be its dual. Since the trigonometric basis is orthogonal in $H^1_p(\Omega)$
as well, one has for any $v \in H^1_p(\Omega)$
\begin{equation}\label{eq:four01}
\Vert v \Vert_{H^1_p(\Omega)}^2 = \sum_k (1+|k|^2)|\hat{v}_k|^2 = \sum_k |\hat{V}_k|^2 \;, 
\qquad (\text{setting }\hat{V}_k := \sqrt{(1+|k|^2)}\hat{v}_k)\;;
\end{equation}
here and in the sequel, $|k|$ denotes the Euclidean norm of the multi-index $k$. 
On the other hand, if $f \in H^{-1}_p(\Omega)$, we set 
$$
\hat{f}_k=\langle f, \phi_k \rangle \;, \qquad \text{so that } \ 
\langle f, v \rangle = \sum_k \hat{f}_k \hat{v}_k \quad \forall v \in H^1_p(\Omega) \;;
$$
the norm representation is
\begin{equation}\label{eq:four02}
\Vert f \Vert_{H^{-1}_p(\Omega)}^2 = \sum_k \frac1{(1+|k|^2)}|\hat{f}_k|^2 = \sum_k |\hat{F}_k|^2 \;, 
\qquad (\text{setting }\hat{F}_k := \frac1{\sqrt{(1+|k|^2)}}\hat{f}_k)\;.
\end{equation}
Throughout the paper, we will use the notation $\Vert \ . \ \Vert$ to
indicate both the $H^1_p(\Omega)$-norm 
of a function $v$, or the $H^{-1}_p(\Omega)$-norm of a linear
form $f$; the specific meaning will be clear from the context.

Given any finite index set $\Lambda \subset {\mathbb{Z}}^d$, we define
the subspace of $V:=H^1_p(\Omega)$
$$
V_{\Lambda} := {\rm span}\,\{\phi_k\, | \, k \in \Lambda \}\;;
$$
we set $|\Lambda|= \rm{card}\, \Lambda$, so that $\rm{dim}\, V_{\Lambda}=|\Lambda|$. If $g$ admits an
expansion $g = \sum_k \hat{g}_k \phi_k $ (converging in an appropriate norm), then we define its 
projection $P_\Lambda g$ upon $V_\Lambda$ by setting
$$
P_\Lambda g = \sum_{k \in \Lambda} \hat{g}_k \phi_k \;.
$$
%------------------------------------------------------------------------------------
\subsection{Galerkin discretization and residual}
%------------------------------------------------------------------------------------
We now consider the elliptic problem
\begin{equation}\label{eq:four03}
\begin{cases} 
Lu=-\nabla \cdot (\nu \nabla u)+ \sigma u = f & \text{in } \Omega \;, \\
u \ \ 2\pi\text{-periodic in each direction} \;, 
\end{cases} 
\end{equation}
where $\nu$ and $\sigma$ are sufficiently smooth real coefficients satisfying 
$0 < \nu_* \leq \nu(x) \leq \nu^* < \infty$ and $0 < \sigma_* \leq \sigma(x) \leq \sigma^* < \infty$
in $\Omega$; let us set
$$
\alpha_* = \min(\nu_*, \sigma_*) \qquad \text{and} \qquad \alpha^* = \max(\nu^*, \sigma^*) \;.
$$
We formulate this problem variationally as
\begin{equation}\label{eq:four.1}
u \in H^1_p(\Omega) \ \ : \quad a(u,v)= \langle f,v \rangle \qquad \forall v \in  H^1_p(\Omega) \;,
\end{equation}
where $a(u,v)=\int_\Omega \nu \nabla u \cdot \nabla \bar{v} + \int_\Omega \sigma u \bar{v}$ (bar 
indicating as usual complex conjugate). We denote by 
$\tvert v \tvert = \sqrt{a(v,v)}$
the energy norm of any $v \in H^1_p(\Omega)$, which satisfies 
\begin{equation}\label{eq:four.1bis}
\sqrt{\alpha_*}  \Vert v \Vert  \leq \tvert v \tvert \leq 
\sqrt{\alpha^*}  \Vert v \Vert \;.
\end{equation}

Given any finite set $\Lambda \subset \mathbb{Z}^d$, 
the Galerkin approximation is defined as
\begin{equation}\label{eq:four.2}
u_\Lambda \in V_\Lambda \ \ : \quad a(u_\Lambda,v_\Lambda)= 
\langle f,v_\Lambda \rangle \qquad \forall v_\Lambda \in V_\Lambda \;.
\end{equation}
For any $w \in V_\Lambda$, we define the residual
$$
r(w)=f-Lw = \sum_k \hat{r}_k(w) \phi_k \;, \qquad \text{where} \qquad 
\hat{r}_k(w) = \langle f - Lw, \phi_k \rangle = \langle f,\phi_k \rangle -a(w,\phi_k) \;.
$$
Then, the previous definition of $u_\Lambda$ is equivalent to the condition
\begin{equation}\label{eq:four.2.1ter}
P_\Lambda r(u_\Lambda) = 0 \;, \qquad \text{i.e., } 
\quad \hat{r}_k(u_\Lambda)=0 \qquad \forall k \in \Lambda \;.
\end{equation}
On the other hand, by the continuity and coercivity of the bilinear
form $a$, one has 
\begin{equation}\label{eq:four.2.1}
\frac1{\alpha^*} \Vert r(u_\Lambda) \Vert \leq
\Vert u - u_\Lambda \Vert \leq 
\frac1{\alpha_*} \Vert r(u_\Lambda) \Vert \;,
\end{equation}
or, equivalently,
\begin{equation}\label{eq:four.2.1bis}
\frac1{\sqrt{\alpha^*}} \Vert r(u_\Lambda) \Vert \leq
\tvert u - u_\Lambda \tvert \leq 
\frac1{\sqrt{\alpha_*}} \Vert r(u_\Lambda) \Vert \;.
\end{equation}

%-----------------------------------------------------------------------------------
\subsection{Algebraic representations}\label{sec:algebraic_repres}
%-----------------------------------------------------------------------------------
Let us identify the solution $u = \sum_k \hat{u}_k \phi_k$ of Problem (\ref{eq:four.1})
with the vector $\mathbf{u}=(\hat{U}_k)=(c_k \hat{u}_k) \in \mathbb{C}^{\mathbb{Z}^d}$ 
of its $H^1_p$-normalized Fourier coefficients, where we set for convenience 
$c_k=\sqrt{1+|k|^2}$. Similarly, let us identify
the right-hand side $f$ with the vector $\mathbf{f}=(\hat{F}_\ell)=(c_\ell^{-1}\hat{f}_\ell)
\in \mathbb{C}^{\mathbb{Z}^d}$ of its $H^{-1}_p$-normalized Fourier coefficients.
Finally, let us introduce the bi-infinite, Hermitian and positive-definite matrix 
\begin{equation}\label{eq:four100}
\mathbf{A}=(a_{\ell,k}) \qquad \text{with} \qquad 
a_{\ell,k}=\frac1{c_\ell c_k} a(\phi_k,\phi_\ell) \;.
\end{equation}
Then, Problem (\ref{eq:four.1}) can be equivalently written as
\begin{equation}\label{eq:four110}
\mathbf{A} \mathbf{u} = \mathbf{f} \;. 
\end{equation}
We observe that the orthogonality properties of the trigonometric basis implies that
the matrix $\mathbf{A}$ is diagonal if and only if the coefficients $\nu$ and $\sigma$
are constant in $\Omega$.

Next, consider the Galerkin problem (\ref{eq:four.2}) and let 
$\mathbf{u}_\Lambda \in \mathbb{C}^{|\Lambda|}$ be the vector collecting the coefficients
of $u_\Lambda$ indexed in $\Lambda$; 
let $\mathbf{f}_\Lambda \in \mathbb{C}^{|\Lambda|}$ be the analogous restriction for
the vector of the coefficients of $f$. Finally, denote by $\mathbf{R}_\Lambda$ the
matrix that restricts a bi-infinite vector to the portion indexed in $\Lambda$, so that
$\mathbf{E}_\Lambda=\mathbf{R}_\Lambda^H$ is the corresponding extension matrix.
Then, setting
\begin{equation}\label{eq:four120}
\mathbf{A}_\Lambda = \mathbf{R}_\Lambda \mathbf{A} \mathbf{R}_\Lambda^H \;,
\end{equation}
 Problem (\ref{eq:four.2}) can be equivalently written as
\begin{equation}\label{eq:four130}
\mathbf{A}_\Lambda \mathbf{u}_\Lambda = \mathbf{f}_\Lambda \;. 
\end{equation}

%----------------------------------------------------------------------------------
\subsection{Properties of the stiffness matrix}\label{S:properties-A}
%----------------------------------------------------------------------------------
It is  useful to express the elements of $\mathbf{A}$ in terms
of the Fourier coefficients of the operator coefficients $\nu$ and $\sigma$. Precisely,
writing 
$\nu=\sum_k \hat{\nu}_k \phi_k$ and  $\sigma=\sum_k \hat{\sigma}_k \phi_k$
and using the orthogonality of the Fourier basis, one easily gets
\begin{equation}\label{eq:four140}
a_{\ell,k}= \frac1{(2\pi)^{d/2}}\left( \frac{\ell \cdot k}{c_\ell c_k} \hat{\nu}_{\ell-k}
+ \frac1{c_\ell c_k} \hat{\sigma}_{\ell-k} \right) \;.
\end{equation} 
Note that the diagonal elements are uniformly bounded from below,
\begin{equation}\label{eq:four150}
a_{\ell,\ell} \geq \frac1{(2\pi)^{d/2}} \min({\hat{\nu}_0}, \hat{\sigma}_0) > 0 \;, 
\qquad \ell \in \mathbb{Z}^d \;,
\end{equation}
whereas all elements are bounded in modulus by the elements of a 
{{\it Toeplitz}} matrix,
\begin{equation}\label{eq:four160}
|a_{\ell,k}| \leq \frac1{(2\pi)^{d/2}} 
{\left( |\hat{\nu}_{\ell-k}|+ |\hat{\sigma}_{\ell-k}| \right)} \;,
\qquad \ell, k \in \mathbb{Z}^d  \;,
\end{equation}
which decay as $|\ell - k| \to \infty$ at a rate dictated by the smoothness of the operator coefficients.
Indeed, if $\nu$ and $\sigma$ are sufficiently smooth, their 
Fourier coefficients decay at a suitable rate and this property is inherited 
by the off-diagonal elements of the matrix ${\bf A}$, via 
\eqref{eq:four160}. To be precise, if the coefficients $\nu$ and $\sigma$ have 
a finite order of regularity, then the rate of decay of their Fourier coefficients 
is algebraic, i.e. 
\begin{equation}\label{eq:ass-coeff-alg}
|\hat{\nu}_k|,  |\hat{\sigma}_k| \lesssim  (1+\vert k \vert )^{-\eta} 
 \qquad \forall k \in \mathbb{Z}^d \;,
\end{equation}
for some $\eta>0$. On the other hand, if the operator coefficients are real analytic 
in a neighborhood of $\Omega$, then the rate of decay of their Fourier coefficients is exponential, i.e. 
\begin{equation}\label{eq:ass-coeff-exp}
|\hat{\nu}_k|,  |\hat{\sigma}_k| \lesssim  e^{-\eta\vert k \vert}
 \qquad \forall k \in \mathbb{Z}^d \;.
\end{equation}
Correspondingly, the matrix ${\bf A}$ belongs to one of the following
classes.

\begin{definition}[{regularity classes for $\bA$}]\label{def:class.matrix}
{A matrix ${\bf A}$ is said to belong to}
\begin{enumerate}[$\bullet$]
\item the {algebraic} class 
${\mathcal D}_a(\eta_L)$ if there exists a constant $c_L>0$ 
such that its elements satisfy
\begin{equation}
| a_{\ell,k} | \leq  c_L (1+ \vert \ell - k \vert )^{-\eta_L}\;
\qquad \ell, k \in \mathbb{Z}^d  \; ;
\end{equation}
\item the {exponential} class 
${\mathcal D}_e(\eta_L)$ if there exists a constant $c_L>0$ 
such that its elements satisfy
\begin{equation}\label{eq:four170}
| a_{\ell,k} | \leq  c_L e^{-\eta_L\vert \ell - k \vert }\;  \qquad \ell, k \in \mathbb{Z}^d  \;.
\end{equation}
\end{enumerate}
\end{definition}
The following properties hold.
\begin{property}[{continuity of $\bA$}]\label{prop:bounded}
If {either} ${\bf A}\in{\mathcal D}_a(\eta_L)$, with $\eta_L>d$, or 
${\bf A}\in{\mathcal D}_e(\eta_L)$, then ${\bf A}$ defines a bounded operator
on {$\ell^2(\mathbb{Z}^d)$}.
\end{property}
\begin{proof}
See e.g. \cite{Jaffard:1990, Dahlke-Fornasier-Groechenig:2010}. 
\end{proof}

\begin{property}[{inverse of $\bA$: algebraic case}] \label{prop:inverse.alg}
If ${\bf A}\in{\mathcal D}_a(\eta_L)$, with $\eta_L>d$ and ${\bf A}$ is invertible in 
$\ell^2(\mathbb{Z}^d)$, then ${\bf A}^{-1}\in{\mathcal D}_a(\eta_L)$. 
\end{property}
\begin{proof}
See e.g. \cite{Jaffard:1990}.
 \end{proof}

\begin{property}[{inverse of $\bA$: exponential case}] \label{prop:inverse.matrix-estimate}
If $\mathbf{A} \in {\mathcal D}_e(\eta_L)$ and there exists a constant $c_L$
satisfying \eqref{eq:four170} such that 
\begin{equation}\label{restriction-cL}
c_L < \frac12({\rm e}^{\eta_L} -1) \min_\ell a_{\ell,\ell}\;,
\end{equation}
then ${\bf A}$ is invertible in  $\ell^2(\mathbb{Z}^d)$ and 
${\bf A}^{-1}\in{\mathcal D}_e(\bar{\eta}_L)$ where 
$\bar{\eta}_L \in (0,\eta_L]$ 
is such that $\bar{z}={\rm e}^{-\bar{\eta}_L}$ is the unique zero in the 
interval $(0,1)$ of the polynomial
$$
z^2- \frac{{\rm e}^{2\eta_L}+2c_L+1}{{\rm e}^{\eta_L}(c_L+1)}z+1 \;.
$$
\end{property}
\begin{proof}
{We follow the suggestion by Bini \cite{Bini:xx},
 and thus exploit the one-to-one correspondence between Toeplitz
 matrices and formal Laurent series (see e.g. \cite{Toeplitz:book}):}
$$ f(z) = \sum_{k=-\infty}^\infty a_k z^k \longleftrightarrow \bT_f=(t_{i,j}),\quad t_{i,j}=a_{i-j}.$$ 
We refer to the function $f(z)$ as to the symbol associated to the
Toeplitz matrix $\bT_f$. {We recall now a few relations between
$f(z)$ and $\bT_f$. If}
$f(z)$ is analytic on 
${\mathcal A}_\alpha=\{z \in \mathbb{C}: \e^{-\alpha}<\vert z \vert <  \e^\alpha\}$ with $\alpha>0$, then there holds $f(z)=\sum_{k=-\infty}^{+\infty} a_k z^k$, where the coefficients $a_k$ have 
exponential decay with rate $\e^{-\alpha}$ {in the sense that}
for every $0<\rho<\e^{-\alpha}$ there exists 
a constant $\gamma >0$ such that $\vert a_k \vert \leq \gamma
\rho^{\vert k \vert}$. As a consequence, the symbol $f(z)$ of the
Toeplitz matrix $\bT_f$ is analytic on $\mathcal{A}_{\alpha}$ for some
$\alpha>0$  if and only if the elements of $\bT_f$ decay exponentially
with rate $\e^{-\alpha}$. {Moreover, it is known} 
that if $f(z)$ is analytic on $\mathcal{A}_\alpha$ and it is
non-zero  on $\mathcal{A}_\beta\subset\mathcal{A}_\alpha$, then the function $g(z)=1/f(z)$
is well defined and analytic on $\mathcal{A}_\beta$, the matrix $\bT_g$ is the inverse of 
$\bT_f$ and the elements of $\bT_g$ decay exponentially with rate
$\e^{-\beta}$. 

{We next} introduce the analytic functions in
$\mathcal{A}_\alpha$
\[
 h(z)=\sum_{k=1}^{\infty} \e^{-\alpha k}(z^k+z^{-k})
{ ~= \frac{z}{\e^\alpha-z}+\frac{z^{-1}}{\e^{\alpha}-z^{-1}}}, \qquad
f_c(z)=1-ch(z),
\]
with $c>0$. {For $|z|=1$ we deduce  
$|h(z)| \le 2\sum_{k=1}^\infty e^{-\alpha k} = 2/(e^\alpha-1)$, whence
$c|h(z)| <1$ provided that $c<\frac{1}{2}(\e^\alpha-1)$; moreover
$\|\bT_h\|\le\|\bT_h\|_\infty=2/(e^\alpha-1)$,
which is indeed a particular instance of Schur Lemma for symmetric matrices.
For this range of $c$'s,}
$f_c(z)\not= 0$ for $\vert z \vert =1$ and for continuity there exists 
$\mathcal{A}_\beta\subset \mathcal{A}_\alpha$ on which $f_c(z)$ in
non-zero. This implies that $g_c(z):=1/f_c(z)$ is analytic on
$\mathcal{A}_\beta$ and the elements of the associated Toeplitz matrix
$\bT_{g_c}$ decay exponentially with rate $\e^{-\beta}$. 
{The singularities of $g_c$ correspond to zeros of $f_c$, which
are in turn the roots $\zeta_1,\zeta_2$ of the polynomial
\[
z^2-\frac{\e^{2\alpha}+2c+1}{\e^\alpha(c+1)}z+1.
\]
These roots are real provided $c<\frac{1}{2}(\e^\alpha-1)$, in which
case $e^{-\beta}=\zeta_1=\zeta_2^{-1}<1$.}

Let $\bA\in\mathcal{D}_e(\alpha)$, i.e. there exists a constant $c$ such that 
$|a_{\ell,k} | \leq  c e^{-\alpha \vert \ell - k \vert }$  for $\ell, k \in \mathbb{Z}^d$.
{By rescaling of the rows of $\bA$, 
it is not restrictive to assume that the diagonal elements} $\bA$ are
equal to $1$. Then, it is possible to write $\bA=\bI-\bS$ with $\vert
\bS\vert \leq c\bT_h$, the inequality {being meant 
element by element,  and $\|\bS\|<1$. Since
$g_c(z)=1/(1-ch(z))=\sum_{k=0}^{\infty} c^k h(z)^k$ 
is well defined and analytic on $\mathcal{A}_\beta\subset\mathcal{A}_\alpha$,
it follows that}
\[
\left\vert \sum_{k=0}^{\infty} \bS^k\right\vert \leq \sum_{k=0}^{\infty}\vert \bS\vert^k 
\leq \sum_{k=0}^{\infty} c^k \bT_h^k = \bT_{g_c}.
\]
Hence, the elements of the matrix $\bT_{g_c}$ decay exponentially with
rate $\e^{-\beta}$. {Property $\|\bS\|<1$ yields
$\bA^{-1}=(\bI-\bS)^{-1}=\sum_{k=0}^{\infty} \bS^k$ and 
$ \vert\bA^{-1}\vert \leq \bT_{g_c}$, whence
the coefficients of $\bA^{-1}$ being bounded by those of $\bT_{g_c}$
decay exponentially with rate
$\e^{-\beta}$, 
i.e. $\bA^{-1}\in\mathcal{D}_e(\beta)$ for some $\beta<\alpha$. This
gives \eqref{restriction-cL} once the row scaling of $\bA$ is taken into account.}
\end{proof}

{
\begin{example}[sharpness of \eqref{restriction-cL}]\label{sharp-cL}
\rm 
The following example illustrates that \eqref{restriction-cL} is
sharp. Let $\bA$ be
\[
a_{ij} = - 2^{-1-|i-j|} \quad i\ne j,
\qquad
a_{ii} = 1,
\]
which is singular because the sum of the coefficients in every row
vanishes. This $\bA$ corresponds to $e^{\eta_L}=2$, $c_L=\frac12$ and $\frac12
(e^{\eta_L}-1)=\frac12$, which violates \eqref{restriction-cL}.
\end{example}
}

For any integer $J \geq 0$, let $\mathbf{A}_J$ denote {the following
symmetric truncation of the matrix $\mathbf{A}$}
\begin{equation}\label{eq:trunc-matr}
(\mathbf{A}_J)_{\ell,k}=
\begin{cases}
a_{\ell,k} & \text{if } |\ell-k| \leq J \;, \\
0 & \text{elsewhere.}
\end{cases}
\end{equation}
Then, we have the following well-known results, whose proof is reported for completeness.
\begin{property}[truncation]\label{prop:matrix-estimate}
{The truncated matrix $\bA_J$ has a number of non-vanishing entries
bounded by $\omega_d J^d$, where $\omega_d$ is the measure of the 
Euclidean unit ball in $\mathbb{R}^d$. Moreover,
under the assumption of Property \ref{prop:bounded},} there exists a constant 
$C_{\mathbf{A}} $ such that 
\[
\Vert \mathbf{A}-{\mathbf{A}}_J \Vert \leq
\psi_{\mathbf{A}}(J,\eta):=C_{\mathbf{A}} 
\begin{cases}
{(J+1)}^{-(\eta_L-d)} & \text{if }  \mathbf{A} \in {\mathcal D}_a(\eta_L)~\text{(algebraic case)} \;, \\
{(J+1)^{d-1}}{\rm e}^{-\eta_L J}  & \text{if}~\mathbf{A} \in {\mathcal D}_e(\eta_L)~\text{(exponential case)}\ ,
\end{cases}
\]
{for all $J\ge0$.}
Consequently, under the assumptions of Property \ref{prop:inverse.alg} or \ref{prop:inverse.matrix-estimate},
one has
\begin{equation}\label{eq:trunc-invmatr-err}
\Vert \mathbf{A}^{-1}-(\mathbf{A}^{-1})_J \Vert \leq 
\psi_{\mathbf{A}^{-1}} (J,\bar{\eta}_L)
\end{equation}
where we {let $\bar{\eta}_L=\eta_L$ in the algebraic case and
$\bar\eta_L$ be defined in Property \ref{prop:inverse.matrix-estimate}
for the exponential case.}
\end{property}

\begin{proof} 
We use the {Schur Lemma for symmetric matrices},
$\Vert \mathbf{B} \Vert \leq \Vert \mathbf{B} \Vert_\infty
=\sup_\ell \sum_k |b_{\ell,k}|$ for
$\mathbf{B}=\mathbf{A}-\mathbf{A}_J$.
Thus, in the algebraic case
\begin{eqnarray*}
\sup_\ell \sum_{k:|\ell-k|>J} |a_{\ell,k}| &\leq& C_L \sup_\ell \sum_{k:|\ell-k|>J}\frac1{(1+|\ell-k|)^{\eta_L}} \\
&\lsim & \sup_\ell \sum_{q=J+1}^\infty \sum_{\ {k:|\ell-k|=q}}\frac1{(1+q)^{\eta_L}} \lsim
\sup_\ell \sum_{q=J+1}^\infty \frac{q^{d-1}}{(1+q)^{\eta_L}} \lsim {(J+1)}^{d-\eta_L} \;.
\end{eqnarray*}
A similar argument yields the result in the exponential case.
\end{proof}

%-----------------------------------------------------------------------------
\subsection{An equivalent formulation of the Galerkin problem}
%-----------------------------------------------------------------------------
For future reference, herafter we rewrite the Galerkin problem \eqref{eq:four130} in an equivalent (infinite-dimensional) way.
Let  $$\mathbf{P}_\Lambda: \ell^2(\mathbb{Z}^d) \to \ell^2(\mathbb{Z}^d)$$ be the projector operator defined as 
\[
(\mathbf{P}_\Lambda \mathbf{v})_\lambda=
\begin{cases}
v_\lambda & \text{\rm if } \lambda\in\Lambda \;, \\
0 & \text{\rm if } \lambda\notin\Lambda \;.
\end{cases}
\]
Note that $\mathbf{P}_\Lambda$ can be represented  as a diagonal bi-infinite matrix whose diagonal elements 
are $1$ for indexes belonging to $\Lambda$,  zero otherwise.  
Let us set $\mathbf{Q}_\Lambda=\mathbf{I}-\mathbf{P}_\Lambda$ and 
we introduce the bi-infinite matrix $\widehat{\mathbf{A}}_\Lambda:=
\mathbf{P}_\Lambda \mathbf{A} \mathbf{P}_\Lambda + \mathbf{Q}_\Lambda$ which 
is equal to $\mathbf{A}_\Lambda$ for indexes in $\Lambda$ and to the identity matrix, otherwise. 
The definitions of the projectors $\mathbf{P}_\Lambda$ and $\mathbf{Q}_\Lambda$ yield the following result. 
\begin{property}[{invertibility of $\widehat\bA$}]\label{prop:inf-matrix}
If $\mathbf{A}$ is invertible with {either} $\mathbf{A}\in\mathcal{D}_a(\eta_L)$ or 
 $\mathbf{A}\in\mathcal{D}_e(\eta_L)$, then the same holds for $\widehat{\mathbf{A}}_\Lambda$. 
\end{property}

\noindent Now, let us consider the following extended Galerkin problem: 
find $\hat{\mathbf{u}}\in\ell^2(\mathbb{Z}^d)$ such that  
\begin{equation}\label{eq:inf-pb-galerkin}
\widehat{\mathbf{A}}_\Lambda \hat{\mathbf{u}}
= \mathbf{P}_\Lambda \mathbf{f}\ .
\end{equation}
Let  ${\mathbf{E}}_\Lambda: \mathbb{C}^{\vert \Lambda\vert} \to \ell^2(\mathbb{Z}^d)$ be the extension operator 
defined in Sect. \ref{sec:algebraic_repres} 
and let $\mathbf{u}_\Lambda\in \mathbb{C}^{\vert \Lambda\vert}$ be the Galerkin solution to  \eqref{eq:four130};
then, it is easy to check that $\hat{\mathbf{u}}={\mathbf{E}}_\Lambda \mathbf{u}_\Lambda$. 

In the following, with an abuse of notation, the solution of \eqref{eq:inf-pb-galerkin} will be denoted by 
$\mathbf{u}_\Lambda$. We will refer to it as to the (extended) Galerkin solution, meaning the infinite-dimensional 
representant of the finite-dimensional Galerkin solution. In case of possible confusion, we will make clear 
which version (infinite-dimensional or finite-dimensional) has to be considered.

%%%%%%%%%%%%%%%%%%%%%%%%%%%%%%%%%%%%%%%%%%%%%%%%%%%%%%%%%%%%%%%%%%%%%%%%%%%%%%%%%%%%%%
\section{Adaptive algorithms with contraction properties}\label{sec:plain-adapt-alg}
%%%%%%%%%%%%%%%%%%%%%%%%%%%%%%%%%%%%%%%%%%%%%%%%%%%%%%%%%%%%%%%%%%%%%%%%%%%%%%%%%%%%%%

Our first algorithm will be an {\sl ideal one}; it will serve as a reference to illustrate in the
simplest situation the contraction property which guarantees the convergence of the algorithm, 
and it will be subsequently modified to get more efficient versions. The ideal algorithm uses as
error estimator the ideal one, i.e., the norm of the residual in $H^{-1}_p(\Omega)$; 
we thus set, for any $v \in H^1_p(\Omega)$,
\begin{equation}\label{eq:four.2.2}
\eta^2(v)=\Vert r(v) \Vert^2 = \sum_{k \in \mathbb{Z}^d} |\hat{R}_k(v)|^2\;,
\end{equation}
so that (\ref{eq:four.2.1}) can be rephrased as
\begin{equation}\label{eq:four.2.3}
\frac1{\alpha^*} \eta(u_\Lambda) \leq
\Vert u - u_\Lambda \Vert \leq 
\frac1{\alpha_*} \eta(u_\Lambda) \; ;
\end{equation}
{recall that $\hat R_k(v) = (1 + |k|^2)^{-1/2}r_k(v)$ according
  to \eqref{eq:four02}.}
Obviously, this estimator is hardly computable in practice; in Sect. \ref{subsec:comput}
we will introduce a feasible version, but for the moment we go through the
ideal situation.
Given any subset $\Lambda \subseteq \mathbb{Z}^d$, we also define the quantity
$$
\eta^2(v;\Lambda) = \Vert P_\Lambda r(v) \Vert^2 
= \sum_{k \in \Lambda} |\hat{R}_k(v)|^2\;,
$$
so that $\eta(v)=\eta(v;\mathbb{Z}^d)$.
%------------------------------------------------------------------------------------------------------
\subsection{ADFOUR: an ideal algorithm} \label{sec:defADFOUR}
%------------------------------------------------------------------------------------------------------
We now introduce the following procedures, which will enter the definition of all our adaptive algorithms.

\begin{itemize}
\item $u_\Lambda := {\bf GAL}(\Lambda)$ \\
Given a finite subset $\Lambda \subset \mathbb{Z}^d$, the output
$u_\Lambda \in V_\Lambda$ is the solution of the Galerkin problem (\ref{eq:four.2}) relative to $\Lambda$.

\item $r := {\bf RES}(v_\Lambda)$ \\
Given a function $v_\Lambda \in V_\Lambda$ for some finite index set $\Lambda$, 
the output $r$ is the residual $r(v_\Lambda)=f-Lv_\Lambda$.

\item $\Lambda^* := \text{\bf D\"ORFLER}(r, \theta)$\\
Given $\theta \in (0,1)$ and an element $r \in H^{-1}_p(\Omega)$, 
the ouput $\Lambda^* \subset \mathbb{Z}^d$ is a finite set 
such that the inequality
\begin{equation}\label{eq:four.2.5.5}
\Vert P_{\Lambda^*} r \Vert \geq \theta  \Vert r \Vert 
\end{equation}
is satisfied.
\end{itemize}
Note that the latter inequality is equivalent to
\begin{equation}\label{eq:four.2.5.5bis}
\Vert r-P_{\Lambda^*} r \Vert \leq \sqrt{1-\theta^2}  \Vert r \Vert \;.
\end{equation}
If $r=r(u_\Lambda)$ is the residual of a Galerkin solution $u_\Lambda \in V_\Lambda$, 
then by (\ref{eq:four.2.1ter}) we can trivially assume 
that $\Lambda^*$ is contained in $\Lambda^c := \mathbb{Z}^d \setminus \Lambda$. 
For such a residual, inequality (\ref{eq:four.2.5.5}) can then be stated as
\begin{equation}\label{eq:four.2.5.5ter}
\eta(u_\Lambda;\Lambda^*) \geq \theta \eta(u_\Lambda) \;,
\end{equation}
a condition termed {\sl D\"orfler marking} in the finite element literature, or {\sl bulk chasing} in the
wavelet literature. {Writing} $\hat{R}_k = \hat{R}_k(u_{\Lambda})$,
the condition {\eqref{eq:four.2.5.5ter}} can be equivalently stated as
\begin{equation}\label{eq:four.2.4.bis}
\sum_{k \in \Lambda^*}  |\hat{R}_k|^2 
\geq \theta^2  \sum_{k \not \in \Lambda}  |\hat{R}_k|^2 \;. 
\end{equation}
Also note that a set $\Lambda^*$ of minimal cardinality can be immediately determined 
if the coefficients $\hat{R}_k$ are rearranged in non-increasing order of modulus; 
however, the subsequent convergence 
result does not require the property of minimal cardinality for the sets of active coefficients.

In the sequel, we will invariably make the following assumption:

\begin{assumption}[{D\"orfler marking}]\label{ass:minimality} 
The procedure {\bf D\"ORFLER} selects an index set $\Lambda^*$
of minimal cardinality among all those satisfying condition (\ref{eq:four.2.5.5}).
\end{assumption}

\medskip
Given two parameters $\theta \in (0,1)$ and $tol \in [0,1)$, 
we are ready to define  our ideal adaptive algorithm.

{\bf Algorithm ADFOUR}($\theta, \, tol$)
\begin{itemize}
\item[\ ] Set $r_0:=f$, $\Lambda_0:=\emptyset$, $n=-1$
\item[\ ] do
	\begin{itemize}
	\item[\ ] $n \leftarrow n+1$
	\item[\ ] $\partial\Lambda_{n}:= \text{\bf D\"ORFLER}(r_n, \theta)$
	\item[\ ] $\Lambda_{n+1}:=\Lambda_{n} \cup \partial\Lambda_{n}$
	\item[\ ] $u_{n+1}:= {\bf GAL}(\Lambda_{n+1})$
	\item[\ ] $r_{n+1}:= {\bf RES}(u_{n+1})$
	\end{itemize}
\item[\ ]  while $\Vert r_{n+1} \Vert > tol $
\end{itemize}

The following result states the
convergence of this algorithm, with a guaranteed error reduction rate.

\begin{theorem}[{convergence of {\bf ADFOUR}}]\label{teo:four1}
Let us set 
\begin{equation}\label{eq:def_rhotheta}
\rho=\rho(\theta)= \sqrt{1 - \frac{\alpha_*}{\alpha^*}\theta^2} \in (0,1) \;.
\end{equation}
Let $\{\Lambda_n, \, u_n \}_{n\geq 0}$ be the sequence generated by
the adaptive algorithm {\bf ADFOUR}.
Then, the following bound holds for any $n$:
$$
  \tvert u-u_{n+1} \tvert \leq \rho \tvert u-u_n \tvert \;.
$$ 
Thus, for any $tol>0$ the algorithm terminates in a finite number of iterations, whereas for $tol=0$
the sequence $u_n$ converges to $u$ in $H^1_p(\Omega)$ as $n \to \infty$.
\end{theorem}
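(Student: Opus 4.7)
My plan is to establish the contraction via the energy-norm Pythagorean identity (coming from Galerkin orthogonality on nested subspaces) together with a lower bound on the Galerkin increment produced by the D\"orfler condition, and finally the a posteriori upper bound from \eqref{eq:four.2.1bis}.

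First I would observe that $\Lambda_n\subset\Lambda_{n+1}$ implies $V_{\Lambda_n}\subset V_{\Lambda_{n+1}}$, so Galerkin orthogonality yields $a(u-u_{n+1},u_{n+1}-u_n)=0$. Expanding $\tvert u-u_n\tvert^2 = \tvert (u-u_{n+1})+(u_{n+1}-u_n)\tvert^2$ gives the energy-norm Pythagorean identity
\[
\tvert u-u_n\tvert^2 = \tvert u-u_{n+1}\tvert^2 + \tvert u_{n+1}-u_n\tvert^2.
\]
It therefore suffices to prove $\tvert u_{n+1}-u_n\tvert^2 \ge (\alpha_*/\alpha^*)\,\theta^2\,\tvert u-u_n\tvert^2$.

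Second, I would bound the increment $w:=u_{n+1}-u_n\in V_{\Lambda_{n+1}}$ from below via duality. From $a(u-u_n,v)=\langle r_n,v\rangle$ for all $v\in H^1_p(\Omega)$ and Galerkin orthogonality in $V_{\Lambda_{n+1}}$, $w$ satisfies $a(w,v)=\langle r_n,v\rangle$ for every $v\in V_{\Lambda_{n+1}}$. Using $\tvert v\tvert\le\sqrt{\alpha^*}\,\|v\|$ from \eqref{eq:four.1bis}, I get
\[
\tvert w\tvert = \sup_{v\in V_{\Lambda_{n+1}}\setminus\{0\}} \frac{\langle r_n,v\rangle}{\tvert v\tvert} \ \ge\ \frac{1}{\sqrt{\alpha^*}} \sup_{v\in V_{\Lambda_{n+1}}\setminus\{0\}} \frac{\langle r_n,v\rangle}{\|v\|} = \frac{1}{\sqrt{\alpha^*}}\,\|P_{\Lambda_{n+1}} r_n\|,
\]
where the last equality follows because the dual norm of a Fourier functional supported in $\Lambda_{n+1}$, viewed on $V_{\Lambda_{n+1}}$ with the $H^1_p$-norm, is precisely the $H^{-1}_p$-norm of its projection, i.e.\ $(\sum_{k\in\Lambda_{n+1}} |\hat R_k(u_n)|^2)^{1/2}$.

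Third, I would invoke D\"orfler marking: $\partial\Lambda_n\subset\Lambda_{n+1}$ and \eqref{eq:four.2.5.5ter} give $\|P_{\Lambda_{n+1}} r_n\|\ge \|P_{\partial\Lambda_n} r_n\|\ge \theta\|r_n\|$. Combining with the lower a posteriori bound $\|r_n\|^2\ge \alpha_*\,\tvert u-u_n\tvert^2$ from \eqref{eq:four.2.1bis} yields
\[
\tvert w\tvert^2 \ \ge\ \frac{\theta^2}{\alpha^*}\,\|r_n\|^2 \ \ge\ \frac{\alpha_*\,\theta^2}{\alpha^*}\,\tvert u-u_n\tvert^2.
\]
Substituting into the Pythagorean identity gives the claimed contraction with $\rho^2 = 1-(\alpha_*/\alpha^*)\theta^2<1$. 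Geometric decay then implies that for $tol>0$ the stopping criterion $\|r_{n+1}\|\le tol$ is met in finitely many iterations (since $\|r_{n+1}\|\le\alpha^*\|u-u_{n+1}\|\le\sqrt{\alpha^*}\tvert u-u_{n+1}\tvert\le\sqrt{\alpha^*}\rho^{n+1}\tvert u-u_0\tvert$), and for $tol=0$ that $u_n\to u$ in $H^1_p(\Omega)$.

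The only delicate step is the duality identification in the second paragraph, where one must recognize that the supremum of $\langle r_n,v\rangle/\|v\|$ over $v\in V_{\Lambda_{n+1}}$ equals $\|P_{\Lambda_{n+1}} r_n\|$; this is a short Fourier computation using $\hat v_k=\hat r_k/(1+|k|^2)$ on $\Lambda_{n+1}$ as the maximizer. Everything else is a direct chain of the already-stated inequalities \eqref{eq:four.1bis} and \eqref{eq:four.2.1bis}, Galerkin orthogonality on nested spaces, and the D\"orfler condition; minimality of the marking is not needed for contraction.
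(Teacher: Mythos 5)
Your argument is correct and essentially the paper's own proof: both rest on the energy-norm Pythagoras identity for nested Galerkin spaces, the intermediate bound $\tvert u_{n+1}-u_n \tvert \geq \tfrac{1}{\sqrt{\alpha^*}}\Vert P_{\Lambda_{n+1}} r_n \Vert$, the D\"orfler condition, and the lower bound $\Vert r_n \Vert \geq \sqrt{\alpha_*}\,\tvert u-u_n \tvert$ from \eqref{eq:four.2.1bis}; the paper derives the intermediate bound via $r_{n+1}-r_n=-L(u_{n+1}-u_n)$, $\Vert Lw\Vert\le\sqrt{\alpha^*}\tvert w\tvert$ and $P_{\Lambda_{n+1}}r_{n+1}=0$, while you use the equivalent discrete duality identity on $V_{\Lambda_{n+1}}$. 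The only blemish is the harmless slip in your termination chain, where $\alpha^*\Vert u-u_{n+1}\Vert\le\sqrt{\alpha^*}\,\tvert u-u_{n+1}\tvert$ is not valid as written (it would require $\alpha_*$), but the needed estimate $\Vert r_{n+1}\Vert\le\sqrt{\alpha^*}\,\tvert u-u_{n+1}\tvert$ is precisely the left inequality of \eqref{eq:four.2.1bis}.
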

\begin{proof}
For convenience, we use the notation $e_n:=\tvert u-u_n \tvert$ and $d_n:=\tvert u_{n+1} - u_n \tvert$.
As $V_{\Lambda_{n}} \subset V_{\Lambda_{n+1}}$, the following orthogonality property
holds
\begin{equation}\label{pytagora}
  e_{n+1}^2=e_n^2-d_n^2.
\end{equation}
On the other hand, for any $w \in H^{1}_p(\Omega)$, one has
{in light of \eqref{eq:four.1bis}}
\begin{eqnarray*}
\Vert Lw \Vert = \sup_{v \in H^{1}_p(\Omega)}
\frac{\langle Lw, v \rangle}{\Vert v \Vert} =
\sup_{v \in H^{1}_p(\Omega)}
\frac{a(w,v)}{\Vert v \Vert} 
\leq  \tvert w \tvert \sup_{v \in H^{1}_p(\Omega)}
\frac{\tvert v \tvert }{\Vert v \Vert} \leq \sqrt{\alpha^*} 
\tvert w \tvert \;.
\end{eqnarray*}
Thus, {using \eqref{eq:four.2.5.5},}
\begin{eqnarray*}
d_n^2 &\geq& \frac1{\alpha^*} \Vert L(u_{n+1}-u_n) \Vert^2 
=\frac1{\alpha^*} \Vert r_{n+1}-r_n \Vert^2 \\
&\geq& \frac1{\alpha^*} \Vert P_{\Lambda_{n+1}} (r_{n+1}-r_n) \Vert^2 =
\frac1{\alpha^*} \Vert P_{\Lambda_{n+1}} r_n \Vert^2
\geq  \frac{\theta^2}{\alpha^*} \Vert  r_n \Vert^2 \;.
\end{eqnarray*}
On the other hand, the {rightmost} inequality in (\ref{eq:four.2.1bis}) states that
$\Vert r_n \Vert^2 \geq \alpha_* e_n^2$,
whence the result. 
\end{proof}

%-----------------------------------------------------------------------------------
\subsection{{\bf F-ADFOUR:} A feasible version of ADFOUR}\label{subsec:comput}
%-----------------------------------------------------------------------------------
The error estimator $\eta(u_\Lambda)$ based on (\ref{eq:four.2.2}) is not computable in 
practice, since the residual $r(u_\Lambda)$ contains infinitely many coefficients.
We {thus} introduce a new estimator, defined from an approximation of such 
residual with finite Fourier expansion (i.e., a trigonometric polynomial). To this end,
let $\tilde{\nu}$, $\tilde{\sigma}$ and $\tilde{f}$  be suitable trigonometric polynomials,
which approximate $\nu$, $\sigma$ and $f$, respectively, to a given accuracy. Then, the
quantity
\begin{equation}\label{eq:four.2.5}
\tilde{r}(u_\Lambda)= \tilde{f} - \tilde{L}u_\Lambda =  \tilde{f} + \nabla \cdot (\tilde{\nu}
\nabla u_\Lambda) - \tilde{\sigma}  u_\Lambda
\end{equation}
belongs to $V_{\tilde{\Lambda}}$ for some finite subset $\tilde{\Lambda} \subset \mathbb{Z}^d$,
i.e., it has the finite (thus, computable) expansion
$$
\tilde{r}(u_\Lambda)= \sum_{k \in \tilde{\Lambda}} \hat{\tilde{r}}_k(u_\Lambda) \phi_k\;.
$$ 
The choice of the approximate coefficients has to be done in order to fulfil the following
condition: for a fixed parameter $\gamma\in (0,\theta)$, we require that
\begin{equation}\label{eq:four.2.6}  
\Vert r(u_\Lambda) - \tilde{r}(u_\Lambda)  \Vert \leq \gamma
\Vert \tilde{r}(u_\Lambda)  \Vert \;.
\end{equation}
Satisfying such a condition is possible, provided we have full access
to the data. Indeed, 
on the one hand, the left-hand side tends to $0$ as the approximation
of the coefficients 
gets better and better, since (we keep here the full norm indication
for a better clarity)
\begin{eqnarray*}
\Vert r(u_\Lambda) - \tilde{r}(u_\Lambda)  \Vert_{H^{-1}_p(\Omega)} &\leq&
\Vert f - \tilde{f} \Vert_{H^{-1}_p(\Omega)} + \Vert \nu - \tilde{\nu} \Vert_{L^\infty(\Omega)}
\Vert \nabla u_\Lambda \Vert_{L^2(\Omega)^d} + \Vert \sigma - \tilde{\sigma} \Vert_{L^\infty(\Omega)}
\Vert u_\Lambda \Vert_{L^2(\Omega)} \\
&\leq& 
\Vert f - \tilde{f} \Vert_{H^{-1}_p(\Omega)} +( \Vert \nu - \tilde{\nu} \Vert_{L^\infty(\Omega)}
+\Vert \sigma - \tilde{\sigma} \Vert_{L^\infty(\Omega)})
\frac1{\alpha_*}\Vert f  \Vert_{H^{-1}_p(\Omega)} \;,
\end{eqnarray*}
where we have used the bound on the solution of the Galerkin problem (\ref{eq:four.2})
 in terms of the data. On the other hand,
if $u_\Lambda \not = u$, then $ r(u_\Lambda) \not = 0$, {whence}
the right-hand side of 
(\ref{eq:four.2.6}) converges to a non-zero value {as
  $\tilde\Lambda$ increases}.

With this remark in mind, we define a new error estimator by setting
\begin{equation}\label{eq:four.2.7}
\tilde{\eta}^2(u_\Lambda)=\Vert \tilde{r}(u_\Lambda) \Vert^2 
= \sum_{k \in \tilde{\Lambda}} |\hat{\tilde{R}}_k(u_\Lambda)|^2\;, 
\end{equation}
which, {in view of \eqref{eq:four.2.6}, immediately yields}
\begin{equation}\label{eq:four.2.3bis}
\frac{1-\gamma}{\alpha^*} \tilde{\eta}(u_\Lambda) \leq
\Vert u - u_\Lambda \Vert \leq 
\frac{1+\gamma}{\alpha_*} \tilde{\eta}(u_\Lambda) \;.
\end{equation} 

\begin{lemma}[{feasible D\"orfler marking}]\label{lemma:four.1}
Let $\Lambda^*$ be any finite index set such that
$$
\tilde{\eta}(u_\Lambda;\Lambda^*) \geq \theta \tilde{\eta}(u_\Lambda) \;.
$$
Then, 
\begin{equation}\label{eq:four.2.300}
{\eta}(u_\Lambda;\Lambda^*) \geq \tilde{\theta} {\eta}(u_\Lambda)\;,
\qquad \text{with } \ \ \tilde{\theta} = \frac{\theta-\gamma}{1+\gamma} \in (0, \theta)\;.
\end{equation}
\end{lemma}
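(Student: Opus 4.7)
The plan is to pass from the computable estimator $\tilde\eta$ to the ideal estimator $\eta$ by two applications of the triangle inequality in the $H^{-1}_p$-norm, exploiting the bound \eqref{eq:four.2.6} that controls the discrepancy between $r(u_\Lambda)$ and $\tilde r(u_\Lambda)$. Throughout, write $r = r(u_\Lambda)$, $\tilde r = \tilde r(u_\Lambda)$, so that $\eta(u_\Lambda) = \|r\|$, $\tilde\eta(u_\Lambda) = \|\tilde r\|$, and analogously for the restrictions to $\Lambda^*$.

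First I would convert \eqref{eq:four.2.6} into a lower bound of $\|\tilde r\|$ by $\|r\|$. By the reverse triangle inequality,
\[
\|r\| \le \|\tilde r\| + \|r - \tilde r\| \le \|\tilde r\| + \gamma \|\tilde r\| = (1+\gamma)\|\tilde r\|,
\]
so $\|\tilde r\| \ge \|r\|/(1+\gamma)$. This is the only way $\gamma$ will enter the denominator of $\tilde\theta$.

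Next I would bound $\|P_{\Lambda^*} r\|$ from below. Since $P_{\Lambda^*}$ is an orthogonal projector (in particular, has operator norm $1$),
\[
\|P_{\Lambda^*} r\| \ge \|P_{\Lambda^*}\tilde r\| - \|P_{\Lambda^*}(r-\tilde r)\| \ge \theta\|\tilde r\| - \|r-\tilde r\| \ge (\theta-\gamma)\|\tilde r\|,
\]
where I used the hypothesis $\tilde\eta(u_\Lambda;\Lambda^*) \ge \theta\tilde\eta(u_\Lambda)$ on the first term and \eqref{eq:four.2.6} on the second. Combining with the previous step,
\[
\eta(u_\Lambda;\Lambda^*) = \|P_{\Lambda^*} r\| \ge \frac{\theta-\gamma}{1+\gamma}\,\|r\| = \tilde\theta\,\eta(u_\Lambda),
\]
which is the desired inequality. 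The assumption $\gamma \in (0,\theta)$ guarantees $\tilde\theta > 0$, and clearly $\tilde\theta < \theta$ since $\gamma > 0$, so $\tilde\theta \in (0,\theta)$.

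There is really no obstacle here; the only subtle point to flag is that both triangle-inequality steps require the $H^{-1}_p$-norm to be the one induced by the $\ell^2$ norm on the normalized Fourier coefficients $\hat R_k$, so that $P_{\Lambda^*}$ is a genuine orthogonal projector with unit operator norm. This is exactly the setting built in \eqref{eq:four02} and used implicitly in \eqref{eq:four.2.2}, so no further justification is needed.
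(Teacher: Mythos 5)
Your proof is correct and is essentially identical to the paper's: the same two triangle-inequality steps (using that $P_{\Lambda^*}$ has unit norm, the marking hypothesis, and \eqref{eq:four.2.6}), combined with the bound $\Vert \tilde r\Vert \geq \Vert r\Vert/(1+\gamma)$, merely presented in a slightly different order. No gaps.
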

\begin{proof} One has
\begin{eqnarray*}
\Vert P_{\Lambda^*} {r}(u_\Lambda) \Vert &\geq&
\Vert P_{\Lambda^*} \tilde{r}(u_\Lambda) \Vert -
\Vert P_{\Lambda^*} \left( {r}(u_\Lambda) - \tilde{r}(u_\Lambda) \right) \Vert \\
&\geq& \theta \Vert \tilde{r}(u_\Lambda) \Vert - 
\Vert {r}(u_\Lambda) - \tilde{r}(u_\Lambda)  \Vert \\
&\geq& (\theta-\gamma) \Vert \tilde{r}(u_\Lambda) \Vert
\geq  \frac{\theta-\gamma}{1+\gamma} \Vert {r}(u_\Lambda) \Vert \; ,
\end{eqnarray*}
{which is the desired \eqref{eq:four.2.300}.}
\end{proof}

The previous result suggests introducing the following feasible variant of the procedure {\bf RES}:
\begin{itemize}
\item $r := \text{\bf F-RES}(v_\Lambda, \gamma)$ \\
Given $\gamma \in (0,\theta)$ and a function $v_\Lambda \in V_\Lambda$ 
for some finite index set $\Lambda$, the output $r$ is an approximate residual 
$\tilde{r}(v_\Lambda)=\tilde{f}+\nabla \cdot(\tilde{\nu}\nabla v_\Lambda)-\tilde{\sigma}v_\Lambda$,
defined on a finite set $\tilde{\Lambda}$ and satisfying
$$
\Vert r(v_\Lambda) - \tilde{r}(v_\Lambda)  \Vert \leq \gamma 
\Vert \tilde{r}(v_\Lambda)  \Vert \;.
$$
\end{itemize}

\begin{theorem}[{contraction property of {\bf F-AFOUR}}]\label{teo:four2}
Consider the feasible variant {\bf F-ADFOUR} of the adaptive algorithm {\bf ADFOUR}, 
where the step 
$r_{n+1}:= {\bf RES}(u_{n+1})$ is replaced by the step $r_{n+1}:= \text{\bf F-RES}(u_{n+1},\gamma)$
for some $\gamma \in (0,\theta)$. 
Then, the same conclusions of Theorem \ref{teo:four1} hold true for this variant, with the contraction
factor $\rho$ replaced by $\rho=\rho(\tilde{\theta})$, where $\tilde{\theta}$ is defined 
in (\ref{eq:four.2.300}).  \endproof
\end{theorem}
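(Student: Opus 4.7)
The approach is to adapt the proof of Theorem \ref{teo:four1} almost verbatim, with Lemma \ref{lemma:four.1} serving as the bridge that converts the feasible D\"orfler condition on the approximate residual $\tilde{r}(u_n)$ into a genuine D\"orfler condition on the exact residual $r(u_n)$, albeit with the reduced parameter $\tilde{\theta}=(\theta-\gamma)/(1+\gamma)$ in place of $\theta$. Once that conversion is in hand, the remainder of the argument for the ideal algorithm transfers without further modification.

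Concretely, I would first set $e_n=\tvert u-u_n\tvert$ and $d_n=\tvert u_{n+1}-u_n\tvert$, and reuse the Galerkin orthogonality together with the nestedness $V_{\Lambda_n}\subset V_{\Lambda_{n+1}}$ to deduce the Pythagorean identity $e_{n+1}^2=e_n^2-d_n^2$; this step is unaffected by the feasible modification, since it depends only on the approximation spaces, not on the estimator. Next, I would reuse the continuity bound $\Vert Lw\Vert\leq \sqrt{\alpha^*}\,\tvert w\tvert$ combined with $P_{\Lambda_{n+1}}r_{n+1}=0$ (by the Galerkin identity \eqref{eq:four.2.1ter}) to obtain
\[
d_n^2 \;\geq\; \frac{1}{\alpha^*}\Vert L(u_{n+1}-u_n)\Vert^2 \;=\; \frac{1}{\alpha^*}\Vert r_{n+1}-r_n\Vert^2 \;\geq\; \frac{1}{\alpha^*}\Vert P_{\Lambda_{n+1}}r_n\Vert^2,
\]
exactly as in the ideal case.

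The one genuinely new ingredient is the lower bound on $\Vert P_{\Lambda_{n+1}}r_n\Vert$. By construction of {\bf F-ADFOUR}, the set $\partial\Lambda_n$ satisfies the feasible marking $\tilde{\eta}(u_n;\partial\Lambda_n)\geq \theta\,\tilde{\eta}(u_n)$, so Lemma \ref{lemma:four.1} yields $\eta(u_n;\partial\Lambda_n)\geq \tilde{\theta}\,\eta(u_n)$, that is, $\Vert P_{\partial\Lambda_n}r_n\Vert\geq \tilde{\theta}\,\Vert r_n\Vert$. Since $\partial\Lambda_n\subseteq\Lambda_{n+1}$, monotonicity of the $\ell^2$-projection norm in the index set gives $\Vert P_{\Lambda_{n+1}}r_n\Vert\geq \tilde{\theta}\,\Vert r_n\Vert$. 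Combining this with the previous display and the lower bound $\Vert r_n\Vert^2\geq \alpha_* e_n^2$ from \eqref{eq:four.2.1bis}, I obtain $d_n^2\geq (\alpha_*/\alpha^*)\tilde{\theta}^2 e_n^2$, and consequently $e_{n+1}^2\leq \bigl(1-(\alpha_*/\alpha^*)\tilde{\theta}^2\bigr)e_n^2=\rho(\tilde{\theta})^2 e_n^2$.

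There is no serious obstacle here; the argument is essentially mechanical once Lemma \ref{lemma:four.1} is inserted at the right place. The only minor point deserving care is to check that $\rho(\tilde{\theta})<1$ is a bona fide contraction factor: the hypothesis $\gamma\in(0,\theta)$ guarantees $\tilde{\theta}\in(0,\theta)\subset(0,1)$, whence $\rho(\tilde{\theta})\in(0,1)$ by definition \eqref{eq:def_rhotheta}. Finite termination when $tol>0$ and convergence $u_n\to u$ in $H^1_p(\Omega)$ when $tol=0$ then follow from the geometric decay of $e_n$, precisely as in Theorem \ref{teo:four1}.
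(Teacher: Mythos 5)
Your proof is correct and follows exactly the route the paper intends: the paper leaves this theorem without a written proof precisely because Lemma \ref{lemma:four.1} converts the feasible marking on $\tilde{r}(u_n)$ into the exact D\"orfler condition with parameter $\tilde{\theta}$, after which the argument of Theorem \ref{teo:four1} applies verbatim, as you carry out. Your added check that $\tilde{\theta}\in(0,1)$ so that $\rho(\tilde{\theta})<1$ is a harmless and correct refinement.
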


In the rest of the paper, we will develop our analysis considering Algorithm {\bf ADFOUR} rather
than {\bf F-ADFOUR}; this is just for the sake of simplicity, since all the conclusions
extend in a straightforward manner to the latter version as well.

%-----------------------------------------------------------------------------------
\subsection{{\bf A-ADFOUR:} An aggressive version of ADFOUR}\label{subsec:aggress}
%-----------------------------------------------------------------------------------
Theorem \ref{teo:four1} indicates that even if one {chooses}
$\theta$ very close to $1$, the predicted
error reduction rate $\rho=\rho(\theta)$ is always bounded from below by the quantity
$\sqrt{1 - \frac{\alpha_*}{\alpha^*}}$. Such a result looks overly pessimistic, particularly in the
case of smooth (analytic) solutions, since a Fourier method allows for an exponential decay of
the error as the number of (properly selected) active degrees of freedom is increased.
{Fig \ref{fig:theta-var} displays the influence of D\"orfler
  parameter on the decay rate and number of solves:
  choosing $\theta$ closer to 1
  does not significantly affect the rate of decay of the error versus
  the number of activated degrees of
freedom, but it significantly reduces the number of iterations.
This in turn reduces the computational cost measured in terms of
Galerkin solves.}  
%%%%%%%%%%%%%%%%%%%%%%%%%%%%%%%%%%%%%%%%%%%%%%%%%%%%%%%%%%%%%%%%%%%%%
\begin{figure}[t!]\label{fig:theta-var}
\begin{center}
\includegraphics[width=.5\textwidth]{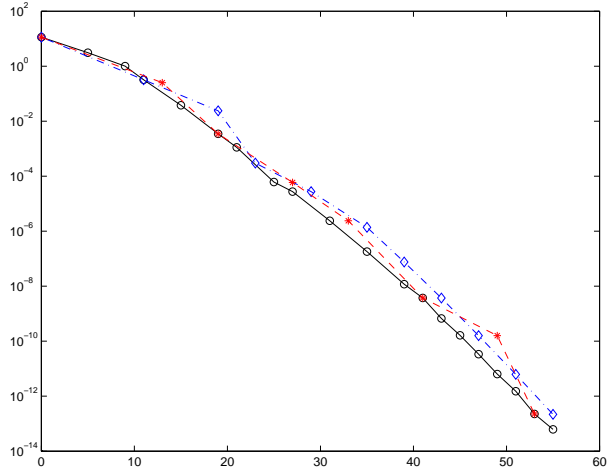}
\end{center}
\caption{Residual norm vs number of degrees of freedom activated by {\bf ADFOUR},
for different choices of D\"orfler parameter {$\theta$}; 
solid line: $\theta=1-10^{-1}$; dash-dotted line: 
$\theta=1-10^{-2}$; dashed line: $\theta=1-10^{-3}$. 
{The symbols (circles, diamonds, stars) identify the various
{\bf ADFOUR} iterations for the sample 1D problem (\ref{eq:four03}) with
analytic solution $u(x)={\rm exp}(\cos 2x +\sin x)$ and coefficients
with $\nu=1+\frac12 \sin 3x$ and $\sigma={\rm exp}(2\cos 3x)$.
}
}
\end{figure}
%%%%%%%%%%%%%%%%%%%%%%%%%%%%%%%%%%%%%%%%%%%%%%%%%%%%%%%%%%%%%%%%%%%%%%

Motivated by this observation, hereafter we consider a variant of Algorithm {\bf ADFOUR}, which
-- under the assumptions of Property \ref{prop:inverse.alg} or \ref{prop:inverse.matrix-estimate}
 -- guarantees an arbitrarily large error reduction per iteration, 
provided the set of the new degrees of freedom detected by {\bf D\"ORFLER} is suitably enriched.

At the $n$-th iteration, let us define the {set} 
$\Lambda_{n+1}:=\Lambda_{n} \cup \partial\Lambda_{n}$ 
by setting
\begin{equation}\label{eq:aggr1}
\begin{split}
\widetilde{\partial\Lambda}_{n}:=& \text{\bf D\"ORFLER}(r_n, \theta)\\
\partial\Lambda_{n}:=& \text{\bf ENRICH}(\widetilde{\partial\Lambda}_{n},J) \;, 
\end{split}
\end{equation}
where the latter procedure and the value of the integer $J$ will be defined later on.
We recall that the set $\widetilde{\partial\Lambda}_n$ is such that 
$g_n= P_{\widetilde{\partial\Lambda}_n} r_n$ satisfies 
$$
\Vert r_n- g_n \Vert \leq \sqrt{1-\theta^2}  \Vert r_n \Vert 
$$
(see (\ref{eq:four.2.5.5bis})). Let $w_n \in V$ be the solution of $L w_n = g_n$, which in general
will have infinitely many components, and let us split it as
$$
w_n= P_{\Lambda_{n+1}} w_n + P_{\Lambda_{n+1}^c} w_n =: y_n + z_n \in V_{\Lambda_{n+1}} \oplus 
 V_{\Lambda_{n+1}^c} \;.
$$
Then, by the minimality property of the Galerkin solution in the energy norm and by (\ref{eq:four.1bis})
and (\ref{eq:four.2.1bis}), one has
\begin{eqnarray*}
\tvert u - u_{n+1} \tvert &\leq&  \tvert u - (u_{n}+y_{n}) \tvert \leq  
 \tvert u- u_n - w_{n} + z_{n} \tvert \\
&\leq& \frac1{\sqrt{\alpha_*}} \Vert L(u- u_n - w_{n}) \Vert + \sqrt{\alpha^*}\Vert z_{n} \Vert
= \frac1{\sqrt{\alpha_*}} \Vert r_n - g_{n} \Vert  + \sqrt{\alpha^*} \Vert z_{n} \Vert \;.
\end{eqnarray*}
Thus,
$$
\tvert u - u_{n+1} \tvert \leq \frac{1}{\sqrt{\alpha_*}}\sqrt{(1-\theta^2)} \, \Vert r_n \Vert 
+ \sqrt{\alpha^*} \Vert z_{n}\Vert \;.
$$
Now we can write $z_n= \big( P_{\Lambda_{n+1}^c} L^{-1} P_{\widetilde{\partial\Lambda}_n} \big) r_n $; hence,
if $\Lambda_{n+1}$ is defined in such a way that
$$
 k \in \Lambda_{n+1}^c \quad \text{and} \quad \ell \in \widetilde{\partial\Lambda}_n\qquad \Rightarrow \qquad
|k - \ell | > J \;,
$$
then we have 
$$
\Vert P_{\Lambda_{n+1}^c} L^{-1} P_{\widetilde{\partial\Lambda}_n} \Vert \leq
\Vert \mathbf{A}^{-1}-(\mathbf{A}^{-1})_J \Vert \leq 
\psi_{\mathbf{A}^{-1}} (J,\bar{\eta}_L) \;,
$$ 
where we have used (\ref{eq:trunc-invmatr-err}). Now, $J>0$ can be chosen to satisfy
\begin{equation}\label{eq:aggr2}
\psi_{\mathbf{A}^{-1}} (J,\bar{\eta}_L) \leq \sqrt{\frac{1-\theta^2}{\alpha_* \alpha^*}} \;,
\end{equation}
in such a way that
\begin{equation}\label{eq:aggr_error_reduct}
\tvert u - u_{n+1} \tvert \leq \frac1{\sqrt{\alpha_*}} \sqrt{1-\theta^2} \, \Vert r_n \Vert 
\leq \left(\frac{\alpha^*}{\alpha_*}\right)^{1/2} \!\!\! \sqrt{1-\theta^2} \, \tvert u - u_{n} \tvert \;.
\end{equation}
Note that, as desired, the new error reduction rate
\begin{equation}\label{eq:aggr3}
\bar{\rho}=\left(\frac{\alpha^*}{\alpha_*}\right)^{1/2}\! \!  \sqrt{1-\theta^2}
\end{equation}
can be made arbitrarily small by choosing $\theta$ arbitrarily close to $1$.
The procedure {\bf ENRICH} is thus defined as follows: 
\begin{itemize}
\item $\Lambda^* := \text{\bf ENRICH}(\Lambda,J)$ \\
Given an integer $J \geq 0$ and a finite set $\Lambda \subset \mathbb{Z}^d$, the output is the set
$$
\Lambda^* := 
\{ k \in \mathbb{Z}^d\ : \ \text{ there exists } \ell \in \Lambda \text{  such that } |k - \ell| \leq J \} \;.
$$
\end{itemize}
Note that since the procedure adds a $d$-dimensional ball of radius $J$ around each point of $\Lambda$, the cardinality
of the new set $\Lambda^*$ can be estimated as
\begin{equation}\label{eq:estim-enrich}
|\Lambda^*| \leq |\overline{B_d(0,J)}\cap \mathbb{Z}^d|\, |\Lambda| \sim \omega_d J^d |\Lambda| \;,
\end{equation}
where $\omega_d$ {is the measure of the $d$-dimensional Euclidean}
unit ball $B_d(0,1)$ centered at the origin.

It is convenient for future reference to denote by ${\partial\Lambda}_{n}:= \text{\bf E-D\"ORFLER}(r_n, \theta,J)$
the procedure described in (\ref{eq:aggr1}).
We summarize our results in the following theorem.
\begin{theorem}[{contraction property of {\bf A-ADFOUR}}]\label{teo:four3}
Consider the aggressive variant {\bf A-ADFOUR} of the adaptive algorithm {\bf ADFOUR}, 
in which the step ${\partial\Lambda}_{n}:= \text{\bf D\"ORFLER}(r_n, \theta)$
 is replaced by %the step 
$$
{\partial\Lambda}_{n}:= \text{\bf E-D\"ORFLER}(r_n, \theta, J) \;,
$$ 
where $\theta$ is such that $\bar{\rho}$ defined in (\ref{eq:aggr3}) is smaller than $1$,
and $J$ is the smallest integer for which (\ref{eq:aggr2})
is fulfilled. Let the assumptions of Property \ref{prop:inverse.alg} 
or \ref{prop:inverse.matrix-estimate} be satisfied.
Then, the same conclusions of Theorem \ref{teo:four1} hold true for this variant, with the contraction
factor $\rho$ replaced by $\bar{\rho}$.  \endproof
\end{theorem}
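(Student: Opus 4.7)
My plan is to assemble the estimate essentially already laid out in the discussion preceding the theorem into a self-contained contraction bound. The three ingredients are: Galerkin quasi-optimality in the energy norm, the D\"orfler bound \eqref{eq:four.2.5.5bis} for the portion of the residual that is captured by $\widetilde{\partial\Lambda}_n$, and the off-diagonal decay of $\mathbf{A}^{-1}$ guaranteed by Property \ref{prop:inverse.alg} or \ref{prop:inverse.matrix-estimate} together with the truncation estimate of Property \ref{prop:matrix-estimate}.

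Concretely, I would introduce $g_n := P_{\widetilde{\partial\Lambda}_n} r_n$, let $w_n\in V$ solve $L w_n = g_n$, and split $w_n = y_n + z_n$ with $y_n = P_{\Lambda_{n+1}} w_n \in V_{\Lambda_{n+1}}$ and $z_n = P_{\Lambda_{n+1}^c} w_n$. The best-approximation property of the Galerkin solution in the energy norm gives $\tvert u-u_{n+1}\tvert \leq \tvert u - (u_n + y_n)\tvert \leq \tvert u - u_n - w_n\tvert + \tvert z_n\tvert$. Using \eqref{eq:four.1bis} on each piece and the identity $L(u-u_n - w_n) = r_n - g_n$ converts this into $\tvert u-u_{n+1}\tvert \le \alpha_*^{-1/2}\Vert r_n-g_n\Vert + \sqrt{\alpha^*}\,\Vert z_n\Vert$. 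The D\"orfler step then disposes of the first summand through $\Vert r_n-g_n\Vert \le \sqrt{1-\theta^2}\,\Vert r_n\Vert$.

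The heart of the argument is the bound on $\Vert z_n\Vert$. The map $r_n \mapsto z_n$ factors at the coefficient level as $\mathbf{P}_{\Lambda_{n+1}^c}\mathbf{A}^{-1}\mathbf{P}_{\widetilde{\partial\Lambda}_n}$, and by the very definition of \textbf{ENRICH}, every pair of indices $(k,\ell)$ with $k\in\Lambda_{n+1}^c$ and $\ell\in\widetilde{\partial\Lambda}_n$ satisfies $|k-\ell|>J$. Hence this composition only samples entries of $\mathbf{A}^{-1}$ outside the $J$-band around the diagonal, so its spectral norm is dominated by $\Vert \mathbf{A}^{-1}-(\mathbf{A}^{-1})_J\Vert \le \psi_{\mathbf{A}^{-1}}(J,\bar\eta_L)$ by \eqref{eq:trunc-invmatr-err}. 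The choice of $J$ from \eqref{eq:aggr2} then yields $\Vert z_n\Vert \le \sqrt{(1-\theta^2)/(\alpha_*\alpha^*)}\,\Vert r_n\Vert$.

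Combining the two bounds I obtain $\tvert u-u_{n+1}\tvert \le \alpha_*^{-1/2}\sqrt{1-\theta^2}\,\Vert r_n\Vert$, and the upper half of \eqref{eq:four.2.1bis} converts $\Vert r_n\Vert$ to $\sqrt{\alpha^*}\,\tvert u-u_n\tvert$, giving exactly \eqref{eq:aggr_error_reduct} and hence the contraction factor $\bar\rho$. The remaining statements about finite termination for $tol>0$ and convergence for $tol=0$ follow verbatim from the argument in Theorem \ref{teo:four1}. The main technical subtlety is the norm estimate on $\mathbf{P}_{\Lambda_{n+1}^c}\mathbf{A}^{-1}\mathbf{P}_{\widetilde{\partial\Lambda}_n}$: one must pass from the PDE inverse $L^{-1}$ to the coefficient-space operator $\mathbf{A}^{-1}$, invoke inheritance of decay from $\mathbf{A}$ to $\mathbf{A}^{-1}$ via Property \ref{prop:inverse.alg}/\ref{prop:inverse.matrix-estimate}, and apply the Schur-type truncation bound in Property \ref{prop:matrix-estimate}. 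Once this is established, the rest is bookkeeping with the norm equivalences \eqref{eq:four.1bis} and \eqref{eq:four.2.1bis}.
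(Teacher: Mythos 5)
Your proof is correct and follows essentially the same route as the paper's own argument, which is laid out in Sect.~\ref{subsec:aggress} immediately before the theorem statement: the splitting $w_n=y_n+z_n$ with $Lw_n=g_n$, the D\"orfler bound $\Vert r_n-g_n\Vert\le\sqrt{1-\theta^2}\,\Vert r_n\Vert$, and the estimate of $\Vert P_{\Lambda_{n+1}^c}L^{-1}P_{\widetilde{\partial\Lambda}_n}\Vert$ through \eqref{eq:trunc-invmatr-err} and the choice of $J$ in \eqref{eq:aggr2} are exactly the steps used there. (Like the paper, you tacitly absorb the factor $2$ that arises from adding the two summands, each bounded by $\alpha_*^{-1/2}\sqrt{1-\theta^2}\,\Vert r_n\Vert$; this only multiplies $\bar\rho$ in \eqref{eq:aggr3} by a harmless constant and does not affect the conclusion that the contraction factor can be made arbitrarily small by taking $\theta$ close to $1$.)
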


%------------------------------------------------------------------------------------
\subsection{{C-ADFOUR and PC-ADFOUR:} ADFOUR with coarsening}\label{subsec:coarse-adfour} 
%------------------------------------------------------------------------------------

The adaptive algorithm {\bf ADFOUR} and its variants introduced above are not guaranteed to be 
optimal in terms of complexity. 
Indeed, the discussion in the forthcoming Sect. \ref{sec:spars-res} 
{for the exponential case} will indicate 
that the residual $r(u_\Lambda)$ may be
significantly less sparse than the corresponding Galerkin solution $u_\Lambda$; in particular, we will see
 that many indices in $\Lambda$, activated in an early stage of the adaptive
process, could be lately discarded since the corresponding components of $u_\Lambda$ are zero.
For these reasons, we propose here a new variant of algorithm {\bf ADFOUR}, which
incorporates a recursive coarsening step.

The algorithm is constructed through the procedures {\bf GAL}, {\bf RES}, {\bf D\"ORFLER} already introduced 
in Sect. \ref{sec:defADFOUR}, together with the new procedure {\bf COARSE} defined as follows: 
\begin{itemize}
\item $\Lambda := {\bf COARSE}(w, \epsilon)$\\
Given a function $w \in V_{\Lambda^*}$ for some finite index set $\Lambda^*$, and an accuracy $\epsilon$
which is known to satisfy $\Vert u -  w \Vert \leq  \epsilon$,
 the output $\Lambda \subseteq \Lambda^*$ is a set of minimal cardinality such that
\begin{equation}\label{eq:def-coarse}
\Vert w - P_\Lambda w \Vert \leq 2 \epsilon \;.
\end{equation}
\end{itemize}

We will subsequently show (see Theorem \ref{T:coarsening}) that the cardinality $|\Lambda|$ 
is optimally related to the sparsity class of $u$. The following result will be used several times in the paper.
\begin{property}[{coarsening}]\label{prop:cons-coarse}
The procedure {\bf COARSE} guarantees the bounds 
\begin{equation}\label{eq:def-coarse-bis}
\Vert u - P_\Lambda w \Vert \leq 3 \epsilon 
\end{equation}
and, for the Galerkin solution $u_\Lambda \in V_\Lambda$,
\begin{equation}\label{eq:def-coarse-ter}
\tvert u - u_\Lambda  \tvert \leq 3 \sqrt{\alpha^*}\epsilon \;.
\end{equation}
\end{property}
\proof
The first bound is trivial, the second one follows from the minimality property of the Galerkin solution 
in the energy norm and from (\ref{eq:four.1bis}):
$$
\tvert u - u_\Lambda \tvert \leq \tvert u - P_{\Lambda}w \tvert \leq \sqrt{\alpha^*}
\Vert u-P_{\Lambda} w \Vert \leq 3 \sqrt{\alpha^*} \epsilon \;. \qquad \quad \endproof
$$

Given two parameters $\theta \in (0,1)$ and $tol \in [0,1)$, we define the following adaptive algorithm
with coarsening. 

%\newpage
{\bf Algorithm C-ADFOUR}($\theta, \ tol$)
\begin{itemize}
\item[\ ] Set $r_0:=f$, $\Lambda_0:=\emptyset$, $n=-1$
\item[\ ] do
	\begin{itemize}
	\item[\ ] $n \leftarrow n+1$
	\item[\ ] set $\Lambda_{n,0}=\Lambda_n$, $r_{n,0}=r_n$
	\item[\ ] $k=-1$
	\item[\ ] do
	\begin{itemize}
		\item[\ ] $k \leftarrow k+1$
		\item[\ ] $\partial\Lambda_{n,k}:= \text{\bf D\"ORFLER}(r_{n,k}, \theta)$
		\item[\ ] $\Lambda_{n,k+1}:=\Lambda_{n,k} \cup \partial\Lambda_{n,k}$
		\item[\ ] $u_{n,k+1}:= {\bf GAL}(\Lambda_{n,k+1})$
		\item[\ ] $r_{n,k+1}:= {\bf RES}(u_{n,k+1})$
	\end{itemize}
	\item[\ ] while $\Vert r_{n,k+1} \Vert > \sqrt{1-\theta^2} \Vert r_{n} \Vert$
	\item[\ ] $\Lambda_{n+1}:={\bf COARSE}\left(u_{n,k+1}, \tfrac1{\sqrt\alpha_*} \Vert r_{n,k+1} \Vert \right)$
	\item[\ ] $u_{n+1}:={\bf GAL}(\Lambda_{n+1})$
	\item[\ ] $r_{n+1}:={\bf RES}(u_{n+1})$
	\end{itemize} 
\item[\ ]  while $\Vert r_{n+1} \Vert > tol $
\end{itemize}

We observe that the {specific} choice of accuracy
$\epsilon=\epsilon_n {= \frac{1}{\sqrt{\alpha_*}} \|r_{n,k+1}\|}$ 
in each call of {\bf COARSE} in the algorithm above is
motivated by the wish of guaranteeing a fixed reduction of the 
residual {and error} at each outer iteration. This is
made precise in the following theorem.

\begin{theorem}[{contraction property of {\bf C-ADFOUR}}]\label{teo:adfour-coarse-convergence}
The algorithm {\bf C-ADFOUR} {satisfies} %has the following properties:
\begin{enumerate}[\rm (i)]
\item
The number of iterations of each inner loop is finite and bounded independently of $n$;
\item
The sequence of residuals $r_n$ and errors $u-u_n$ generated for $n \geq 0$ by 
the algorithm satisfies the inequalities
\begin{equation}\label{eq:adgev.1}
\Vert r_{n+1} \Vert \leq \rho  \Vert r_{n} \Vert 
\end{equation}
and 
\begin{equation}\label{eq:adgev.2}
\tvert u-u_{n+1} \tvert \leq \rho \tvert u-u_{n} \tvert 
\end{equation}
for
\begin{equation}\label{eq:adgev.2bis}
\rho=3 \frac{\alpha^*}{\alpha_*}\sqrt{1-\theta^2} \;.
\end{equation}
In particular, if $\theta$ is chosen in such a way that $\rho<1$, 
for any $tol>0$ the algorithm terminates in a finite number of iterations, whereas for $tol=0$
the sequence $u_n$ converges to $u$ in $H^1_p(\Omega)$ as $n \to \infty$.
\end{enumerate}
\end{theorem}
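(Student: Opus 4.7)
The plan is to decouple the analysis of the inner loop from that of the outer step. First, I would show that the inner $k$-loop is exactly one execution of {\bf ADFOUR} restarted from $\Lambda_{n,0}=\Lambda_n$ and $r_{n,0}=r_n$, so Theorem \ref{teo:four1} applies verbatim and yields the geometric decay $\tvert u-u_{n,k}\tvert \le \rho(\theta)^{k}\,\tvert u-u_{n,0}\tvert$ with $\rho(\theta)=\sqrt{1-\frac{\alpha_*}{\alpha^*}\theta^2}<1$. Combining this with the equivalence \eqref{eq:four.2.1bis} between energy norm and $H^{-1}_p$ norm of the residual gives
$$
\|r_{n,k+1}\| \le \sqrt{\alpha^*}\,\tvert u-u_{n,k+1}\tvert \le \sqrt{\tfrac{\alpha^*}{\alpha_*}}\,\rho(\theta)^{k+1}\,\|r_n\|.
$$
Since $\rho(\theta)^{k+1}\to 0$, the inner exit test $\|r_{n,k+1}\|\le\sqrt{1-\theta^2}\,\|r_n\|$ must be met for some $k+1\le K$, where $K$ depends only on $\theta$, $\alpha_*$, $\alpha^*$ (not on $n$), which proves (i).

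For part (ii), once the inner loop stops we have $\|r_{n,k+1}\|\le\sqrt{1-\theta^2}\,\|r_n\|$. The error-residual bound \eqref{eq:four.2.1} shows that $\|u-u_{n,k+1}\|\le\epsilon_n:=\tfrac{1}{\sqrt{\alpha_*}}\|r_{n,k+1}\|$ in the norm consistent with the input to {\bf COARSE}, so that $w=u_{n,k+1}$ and $\epsilon=\epsilon_n$ constitute a legitimate input pair. Applying Property \ref{prop:cons-coarse} to the output $\Lambda_{n+1}$ yields
$$
\tvert u-u_{n+1}\tvert \;\le\; 3\sqrt{\alpha^*}\,\epsilon_n \;=\; 3\sqrt{\tfrac{\alpha^*}{\alpha_*}}\,\|r_{n,k+1}\| \;\le\; 3\sqrt{\tfrac{\alpha^*}{\alpha_*}}\,\sqrt{1-\theta^2}\,\|r_n\|.
$$
Invoking $\|r_n\|\le\sqrt{\alpha^*}\,\tvert u-u_n\tvert$, coming from the operator bound $\|Lw\|\le\sqrt{\alpha^*}\tvert w\tvert$ derived in the proof of Theorem \ref{teo:four1}, delivers \eqref{eq:adgev.2} with the stated $\rho$. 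Estimate \eqref{eq:adgev.1} then follows by applying the reverse direction of \eqref{eq:four.2.1bis} to $u-u_{n+1}$, i.e.\ $\|r_{n+1}\|\le\sqrt{\alpha^*}\,\tvert u-u_{n+1}\tvert$, and then using $\tvert u-u_n\tvert\le \tfrac{1}{\sqrt{\alpha_*}}\|r_n\|$. The remaining statement about termination for $tol>0$ and $H^1_p$-convergence for $tol=0$ is the standard consequence of a contraction $\rho<1$.

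The main subtlety I anticipate is bookkeeping the norm equivalence constants consistently: one must be careful to distinguish the energy norm $\tvert\cdot\tvert$ from the $H^1_p$-norm $\|\cdot\|$ throughout, since the input/output specification of {\bf COARSE} mixes the two scales. Concretely, the prefactor $3\alpha^*/\alpha_*$ in \eqref{eq:adgev.2bis} emerges only after combining one application of Property \ref{prop:cons-coarse} (which contributes $3\sqrt{\alpha^*}$), one step of the inner-loop residual reduction (which contributes $\sqrt{1-\theta^2}$), and two passes through \eqref{eq:four.2.1}--\eqref{eq:four.2.1bis} (which contribute a total factor proportional to $\sqrt{\alpha^*/\alpha_*}\cdot(1/\sqrt{\alpha_*})$). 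No deep argument is needed beyond Theorem \ref{teo:four1} and Property \ref{prop:cons-coarse}; the content of the statement lies in the fact that the outer iteration is again a genuine contraction despite the interposed coarsening, provided $\theta$ is chosen close enough to one to make $\rho<1$.
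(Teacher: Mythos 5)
Your overall route is the paper's own: part (i) is argued exactly as in the paper (the inner loop is a restarted {\bf ADFOUR}, so Theorem \ref{teo:four1} together with \eqref{eq:four.2.1bis} gives $\Vert r_{n,k+1}\Vert\le\sqrt{\alpha^*/\alpha_*}\,\rho(\theta)^{k+1}\Vert r_n\Vert$ and the exit test is met after a number of steps depending only on $\theta,\alpha_*,\alpha^*$), and part (ii) uses the same three ingredients (inner exit test, Property \ref{prop:cons-coarse}, norm equivalences). However, the constant bookkeeping in part (ii) --- precisely the point you flag as the main subtlety --- does not go through as written. First, the legitimacy of the {\bf COARSE} call: its specification requires $\Vert u-w\Vert\le\epsilon$ in the $H^1_p$-norm, and \eqref{eq:four.2.1} gives $\Vert u-u_{n,k+1}\Vert\le\frac1{\alpha_*}\Vert r_{n,k+1}\Vert$, not $\frac1{\sqrt{\alpha_*}}\Vert r_{n,k+1}\Vert$; the factor $\frac1{\sqrt{\alpha_*}}$ is only available for the \emph{energy} norm via \eqref{eq:four.2.1bis}. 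The paper's proof in fact works with $\epsilon_n=\frac1{\alpha_*}\Vert r_{n,k+1}\Vert$ (silently correcting the threshold displayed in the algorithm), and only with this choice does the chain
\[
\tvert u-u_{n+1}\tvert\le 3\sqrt{\alpha^*}\,\epsilon_n\le 3\tfrac{\sqrt{\alpha^*}}{\alpha_*}\sqrt{1-\theta^2}\,\Vert r_n\Vert\le 3\tfrac{\alpha^*}{\alpha_*}\sqrt{1-\theta^2}\,\tvert u-u_n\tvert
\]
produce the stated $\rho$. With your $\epsilon_n=\frac1{\sqrt{\alpha_*}}\Vert r_{n,k+1}\Vert$ the same chain yields the factor $3\frac{\alpha^*}{\sqrt{\alpha_*}}\sqrt{1-\theta^2}$ rather than \eqref{eq:adgev.2bis} (and for $\alpha_*<1$ the admissibility of the input pair to {\bf COARSE} is not even guaranteed); note that your closing factor count $3\sqrt{\alpha^*}\cdot\sqrt{\alpha^*/\alpha_*}\cdot(1/\sqrt{\alpha_*})$ actually corresponds to the $\frac1{\alpha_*}$ choice, so your write-up is internally inconsistent on this point.

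Second, deducing \eqref{eq:adgev.1} from \eqref{eq:adgev.2} via $\Vert r_{n+1}\Vert\le\sqrt{\alpha^*}\tvert u-u_{n+1}\tvert$ followed by $\tvert u-u_n\tvert\le\frac1{\sqrt{\alpha_*}}\Vert r_n\Vert$ converts norms twice and only yields $\Vert r_{n+1}\Vert\le\sqrt{\alpha^*/\alpha_*}\,\rho\,\Vert r_n\Vert$, which is weaker than the statement. The fix --- and what the paper does --- is to derive both \eqref{eq:adgev.1} and \eqref{eq:adgev.2} directly from the mixed intermediate bound $\tvert u-u_{n+1}\tvert\le 3\frac{\sqrt{\alpha^*}}{\alpha_*}\sqrt{1-\theta^2}\,\Vert r_n\Vert$, applying the left inequality of \eqref{eq:four.2.1bis} once to $r_{n+1}$ (for \eqref{eq:adgev.1}) and once to $r_n$ (for \eqref{eq:adgev.2}), so that each inequality costs exactly one factor $\sqrt{\alpha^*}$ and the common contraction factor \eqref{eq:adgev.2bis} is obtained.
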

\begin{proof}
{\rm (i)} 
For any fixed $n$, each inner iteration behaves as the algorithm {\bf ADFOUR} considered
in {Sect. \ref{sec:defADFOUR}}. Hence, setting again 
$\rho=\sqrt{1-\frac{\alpha_*}{\alpha^*} \theta^2}$, we have as in Theorem \ref{teo:four1}
$$
  \tvert u-u_{n,k+1} \tvert \leq \rho^{k+1} \tvert u-u_n \tvert \;,
$$ 
which implies, by (\ref{eq:four.2.1bis}),
$$
\Vert r_{n,k+1} \Vert \leq \sqrt{\alpha^*}   \tvert u-u_{n,k+1} \tvert 
\leq \sqrt{\alpha^*} \rho^{k+1} \tvert u-u_n \tvert 
\leq \sqrt{\frac{\alpha^*}{\alpha_*}} \rho^{k+1} \Vert r_{n} \Vert \;.
$$
This shows that the termination criterion
\begin{equation}\label{eq:adgev.3}
\Vert r_{n,k+1} \Vert \leq   \sqrt{1-\theta^2} \, \Vert r_{n} \Vert
\end{equation}
is certainly satisfied if
$$
\sqrt{\frac{\alpha^*}{\alpha_*}} \rho^{k+1} \leq  \sqrt{1-\theta^2} \;,
$$
i.e., as soon as
$$
k+1 \geq \frac{\log \left( \tfrac{\alpha_*}{\alpha^*}(1-\theta^2) 
\right)}{2 \log \rho} {> k}\;.
$$
We conclude that the number $K_n { = k+1}$ of inner iterations is
bounded by {$1 + \frac{\log \left( \tfrac{\alpha_*}{\alpha^*}(1-\theta^2) 
\right)}{2 \log \rho}$, which is independent of $n$.}

{\rm (ii)} By (\ref{eq:four.2.1}), we have 
$$
\Vert u-u_{n,k+1} \Vert \leq \frac1{\alpha_*} \Vert r_{n,k+1} \Vert \;.
$$
At the exit of the inner loop, the quantity on the right-hand side 
is precisely the parameter $\epsilon_n$ {fed} to
the procedure {\bf COARSE}; then, Property \ref{prop:cons-coarse} yields
$$ 
\tvert u - u_{n+1} \tvert \leq 3 \sqrt{\alpha^*} \epsilon_n \;.
$$
On the other hand, the termination criterion (\ref{eq:adgev.3}) yields
$$
\epsilon_n \leq \frac1{\alpha_*}  \sqrt{1-\theta^2} \Vert r_{n} \Vert \;,
$$
so that
$$
\tvert u - u_{n+1} \tvert
\leq 3 \frac{\sqrt{\alpha^*}}{\alpha_*}  \sqrt{1-\theta^2} \Vert r_{n} \Vert \;.
$$
This bound together with the left-hand inequality in (\ref{eq:four.2.1bis}) applied to $r_{n+1}$ yields
 (\ref{eq:adgev.1}), whereas the same inequality applied to 
$r_{n}$ yields (\ref{eq:adgev.2}). 
\end{proof}

A coarsening step can also be inserted in the aggressive algorithm {\bf A-ADFOUR} considered
in Sect. \ref{subsec:aggress}; indeed, the enrichment step {\bf ENRICH} could activate a larger
number of degrees of freedom than really needed, endangering optimality. The algorithm we now propose can
be viewed as a variant of {\bf C-ADFOUR}, in which the use of {\bf E-D\"ORFLER} instead of {\bf D\"ORFLER}
allows one to take a single inner iteration; in this respect, one can consider the enrichment step  as a
``prediction'', and the coarsening step as a ``correction'', of the new set of active degrees of freedom.
For this reason, we call this variant the {\bf Predictor/Corrector-ADFOUR}, or simply {\bf PC-ADFOUR}. 

Given two parameters $\theta \in (0,1)$ and $tol \in [0,1)$, we choose $J\geq 1$ as the smallest integer
for which (\ref{eq:aggr2}) is fulfilled, and we define the following adaptive algorithm. 

\medskip
{\bf Algorithm PC-ADFOUR}($\theta, \ tol, \ J$)
\begin{itemize}
\item[\ ] Set $r_0:=f$, $\Lambda_0:=\emptyset$, $n=-1$
\item[\ ] do
	\begin{itemize}
	\item[\ ] $n \leftarrow n+1$
	\item[\ ] $\widehat{\partial\Lambda}_{n}:= \text{\bf E-D\"ORFLER}(r_{n}, \theta, J)$		
        \item[\ ] $\widehat\Lambda_{n+1}:=
			\Lambda_{n} \cup \widehat{\partial\Lambda}_{n}$
	\item[\ ] $\widehat{u}_{n+1}:= {\bf GAL}(\widehat\Lambda_{n})$
	%\item[\ ] $\widehat{r}_{n+1}:= {\bf RES}(\widehat{u}_{n+1})$
	\item[\ ] $\Lambda_{n+1}:=
		{\bf COARSE}\left(\widehat{u}_{n+1}, \frac{1}{\alpha_*}
		\sqrt{1-\theta^2}\|r_n\|\right)$
	\item[\ ] $u_{n+1}:={\bf GAL}(\Lambda_{n+1})$
	\item[\ ] $r_{n+1}:={\bf RES}(u_{n+1})$
	\end{itemize} 
\item[\ ]  while $\Vert r_{n+1} \Vert > tol $
\end{itemize}

\begin{theorem}[{contraction property of {\bf PC-ADFOUR}}]\label{teo:pc-adfour-convergence}
{If} the assumptions of Property \ref{prop:inverse.alg} 
or Property \ref{prop:inverse.matrix-estimate} be satisfied, {then} 
the statement (ii) of Theorem \ref{teo:adfour-coarse-convergence} applies to
Algorithm {\bf PC-ADFOUR} as well.
\end{theorem}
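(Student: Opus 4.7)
The plan is to mirror the proof of Theorem \ref{teo:adfour-coarse-convergence}(ii) for \textbf{C-ADFOUR}, replacing the inner loop that drives the residual down to the level $\sqrt{1-\theta^2}\|r_n\|$ by a single enrichment step, whose effect is quantified by Theorem \ref{teo:four3} (which is exactly where the hypothesis of Property \ref{prop:inverse.alg} or \ref{prop:inverse.matrix-estimate} enters). The idea is that in \textbf{PC-ADFOUR} the pair (\textbf{E-D\"ORFLER}, \textbf{GAL}) plays the role of the inner loop of \textbf{C-ADFOUR}, producing an intermediate iterate $\widehat{u}_{n+1}\in V_{\widehat\Lambda_{n+1}}$ that already sits within the appropriate tolerance for the subsequent \textbf{COARSE} call; the rest of the argument is then identical.

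More precisely, I would proceed in the following steps. First, I would invoke the analysis performed for \textbf{A-ADFOUR}, namely the bound \eqref{eq:aggr_error_reduct} applied to $\widehat{u}_{n+1}$, which yields
\[
\tvert u - \widehat{u}_{n+1}\tvert \leq \frac{1}{\sqrt{\alpha_*}}\sqrt{1-\theta^2}\,\|r_n\|,
\]
where the choice of $J$ in \eqref{eq:aggr2} (and hence the invertibility/decay properties of $\mathbf{A}^{-1}$) is what makes this inequality available. Second, I would use the left inequality in \eqref{eq:four.1bis} to pass to the $H^1_p$-norm:
\[
\Vert u-\widehat{u}_{n+1}\Vert \leq \frac{1}{\alpha_*}\sqrt{1-\theta^2}\,\|r_n\| =: \epsilon_n,
\]
so that $\widehat{u}_{n+1}$ meets precisely the accuracy hypothesis required to call \textbf{COARSE} with parameter $\epsilon_n$.

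Third, Property \ref{prop:cons-coarse} applied to the output $\Lambda_{n+1}=\textbf{COARSE}(\widehat{u}_{n+1},\epsilon_n)$ and to the subsequent Galerkin solution $u_{n+1}$ gives
\[
\tvert u - u_{n+1}\tvert \leq 3\sqrt{\alpha^*}\,\epsilon_n
= 3\frac{\sqrt{\alpha^*}}{\alpha_*}\sqrt{1-\theta^2}\,\|r_n\|,
\]
exactly matching the intermediate estimate that appears at the end of the proof of Theorem \ref{teo:adfour-coarse-convergence}(ii). Fourth, applying the left inequality of \eqref{eq:four.2.1bis} to $r_{n+1}$ converts this into \eqref{eq:adgev.1}, while applying the right inequality of \eqref{eq:four.2.1bis} to $r_n$ converts it into \eqref{eq:adgev.2}, with the contraction constant $\rho$ of \eqref{eq:adgev.2bis}. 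The convergence/termination statements then follow verbatim as in Theorem \ref{teo:adfour-coarse-convergence}.

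The only subtle point — and therefore the one I would be most careful about — is step one: the enrichment in \textbf{PC-ADFOUR} is built from the D\"orfler set $\widetilde{\partial\Lambda}_n$ via the $J$-neighborhood procedure \textbf{ENRICH}, and the bound $\tvert u-\widehat{u}_{n+1}\tvert \le \alpha_*^{-1/2}\sqrt{1-\theta^2}\|r_n\|$ is \emph{not} a statement about the Galerkin solution on the D\"orfler set alone but about the solution on the enriched set $\widehat\Lambda_{n+1}$. It relies on splitting $w_n=L^{-1}g_n=y_n+z_n$ as in Sect.\ \ref{subsec:aggress}, on the quasi-optimality of the Galerkin projection in the energy norm, and crucially on the off-diagonal decay estimate \eqref{eq:trunc-invmatr-err} for $\mathbf{A}^{-1}$, which is why the hypothesis of Property \ref{prop:inverse.alg} or \ref{prop:inverse.matrix-estimate} must be assumed. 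Once this piece is borrowed from the proof of Theorem \ref{teo:four3}, the remainder is a direct transcription of the \textbf{C-ADFOUR} argument.
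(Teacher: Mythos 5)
Your proposal is correct and follows essentially the same route as the paper: the paper's proof likewise combines the first inequalities of \eqref{eq:aggr_error_reduct} and \eqref{eq:four.1bis} to get $\Vert u-\widehat{u}_{n+1}\Vert\le\frac{1}{\alpha_*}\sqrt{1-\theta^2}\,\Vert r_n\Vert$, observes that this is precisely the tolerance fed to {\bf COARSE}, and then repeats the final steps of the proof of Theorem \ref{teo:adfour-coarse-convergence}, which you have simply written out explicitly (including the role of the {\bf E-D\"ORFLER} analysis from Sect.\ \ref{subsec:aggress}, where Property \ref{prop:inverse.alg} or \ref{prop:inverse.matrix-estimate} is used). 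One trivial slip: to convert the intermediate bound into \eqref{eq:adgev.2} you need $\Vert r_n\Vert\le\sqrt{\alpha^*}\,\tvert u-u_n\tvert$, i.e.\ the \emph{left} inequality of \eqref{eq:four.2.1bis} applied to $r_n$ (as for $r_{n+1}$), not the right one.
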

\begin{proof}
The first inequalities in both (\ref{eq:aggr_error_reduct}) and (\ref{eq:four.1bis}) yield
$$
\Vert u - \widehat{u}_{n+1} \Vert \leq \frac1{\alpha_*}
\sqrt{1-\theta^2} \, \Vert r_n \Vert \; .
$$
{Since} the right-hand side is precisely the parameter $\epsilon_n$ {fed} to
the procedure {\bf COARSE}, one proceeds as in the proof of Theorem \ref{teo:adfour-coarse-convergence}.
\end{proof}

%%%%%%%%%%%%%%%%%%%%%%%%%%%%%%%%%%%%%%%%%%%%%%%%%%%%%%%%%%%%%%%%%%%%%%%%%%%%%%%%%%%%%%
\section{Nonlinear approximation in Fourier spaces}\label{sec:nl}
%%%%%%%%%%%%%%%%%%%%%%%%%%%%%%%%%%%%%%%%%%%%%%%%%%%%%%%%%%%%%%%%%%%%%%%%%%%%%%%%%%%%%%

%------------------------------------------------------------------------------------------------
\subsection{Best $N$-term approximation and rearrangement}\label{sec:abstract-nl}
%------------------------------------------------------------------------------------------------
Given any nonempty finite index set $\Lambda \subset \mathbb{Z}^d$ and the corresponding subspace
$V_\Lambda \subset V=H^1_p(\Omega)$ of dimension $|\Lambda|=\text{card}\, \Lambda$, 
the best approximation of $v$ in $V_{\Lambda}$ is the orthogonal projection
of $v$ upon $V_{\Lambda}$, i.e. the function $P_{\Lambda} v = \sum_{k \in \Lambda} \hat v_k \phi_k$,
which satisfies
$$
\Vert v - P_{\Lambda} v \Vert= 
\left(\sum_{k \not \in \Lambda} |\hat{V}_k|^2\right)^{1/2} 
$$
(we set $P_\Lambda v = 0$ if $\Lambda=\emptyset$).
For any integer $N \geq 1$, we minimize this error over all possible choices of $\Lambda$
with cardinality $N$, {thereby} leading to the {\sl best $N$-term approximation error}
$$
E_N(v)= \inf_{\Lambda \subset\mathbb{Z}^d , 
\ |\Lambda|=N} \Vert v - P_{\Lambda} v \Vert \;.
$$
A way to construct a {\sl best $N$-term approximation} $v_N$ of $v$ consists of
rearranging the coefficients of $v$ in decreasing order of modulus
$$
\vert \hat{V}_{k_1}\vert \geq \ldots \geq \vert \hat{V}_{k_n} \vert 
\geq \vert \hat{V}_{k_{n+1}} \vert \geq \dots 
$$
and setting $v_N=P_{\Lambda_N}v$ with $\Lambda_N = \{ k_n \ : \ 1 \leq
n \leq N \}$. 
{As already mentioned in the Introduction, 
let us denote from now on $v_n^*=\hat{V}_{k_n}$
the rearranged and rescaled Fourier coefficients of $v$}. Then,
$$
E_N(v)= \left(\sum_{n>N} |v_n^*|^2 \right)^{1/2} \;.
$$

Next, given a strictly decreasing function $\phi : \mathbb{N} \to \mathbb{R}_+$ such that
$\phi(0)=\phi_0$ for some $\phi_0 >0$
and $\phi(N) \to 0$ when $N \to \infty$, we introduce the corresponding
 {\sl sparsity class} ${\mathcal A}_\phi$ by setting
\begin{equation}\label{eq:nl.gen.001}
{\mathcal A}_\phi = {\Big\{ v \in V \ : \ \Vert v \Vert_{{\mathcal A}_\phi}:= \sup_{N \geq 0} 
\, \frac{E_N(v)}{\phi(N)} < +\infty \Big\} \; .}
\end{equation}
We point out that in applications $\Vert v\Vert_{{\mathcal A}_\phi}$
need not be a (quasi-)norm {since}
${\mathcal A}_\phi$ need not be a linear space. Note however that $\Vert v \Vert_{{\mathcal A}_\phi}$ always controls
the $V$-norm of $v$, since $\Vert v \Vert = E_0(v) \leq {\phi_0} \Vert v \Vert_{{\mathcal A}_\phi}$.
Observe that $v \in {\mathcal A}_\phi$ iff there exists a constant $c>0$ such that
\begin{equation}\label{eq:nl.gen.1}
E_N(v) \leq c \phi(N)\;, \qquad \forall N \geq 0 \;.
\end{equation}
The quantity $\Vert v \Vert_{{\mathcal A}_\phi}$ {dictates}
the minimal number $N_\varepsilon$ of basis functions
needed to approximate $v$ with accuracy $\varepsilon$. {In fact}, from the relations
$$
E_{N_\varepsilon}(v) \leq \varepsilon  < E_{N_\varepsilon-1}(v) 
\leq \phi(N_\varepsilon-1) \Vert v \Vert_{{\mathcal A}_\phi} \;,
$$
{and the monotonicity of $\phi$, we obtain}
\begin{equation}\label{eq:nl.gen.2o}
N_\varepsilon \leq \phi^{-1}\left(\frac{\varepsilon}{\Vert v \Vert_{{\mathcal A}_\phi}}\right) +1 \;.
\end{equation}
The second addend on the right-hand side can be absorbed by a multiple of the first one, provided $\varepsilon$
is sufficiently small; in other words, it is not restrictive to assume that there exists a constant
$\kappa$ slightly larger than $1$ such that
\begin{equation}\label{eq:nl.gen.2}
N_\varepsilon \leq \kappa \, \phi^{-1}\left(\frac{\varepsilon}{\Vert v \Vert_{{\mathcal A}_\phi}}\right) \;.
\end{equation}

\begin{remark}[{sparsity class for $V'$}]\label{rem:nl1}{\rm
Replacing $V$ by $V'$ in (\ref{eq:nl.gen.001}) leads to the definition of a sparsity class, still denoted
by ${\mathcal A}_\phi$, in the space of linear continuous forms 
{$f$} on $H^1_p(\Omega)$. This observation applies to the
subsequent definitions as well (e.g., for the class ${\mathcal A}^{\eta,t}_G$).  In essence,
we will treat in a unified way the nonlinear approximation of a function $v \in H^1_p(\Omega)$ and of
a form ${f} \in H^{-1}_p(\Omega)$. \endproof
}
\end{remark}

Throughout the paper, we shall consider two main families of sparsity classes, identified by specific
choices of the function $\phi$ depending upon one or more parameters. The first family 
is related to the best approximation in {\em Besov} spaces of periodic functions, thus accounting for a
finite-order regularity in $\Omega$; the corresponding functions $\phi$ exhibit an algebraic decay as
$N \to \infty$, {which motivates our terminology of
{\em algebraic classes}}.
The second family is related to the best approximation in {\em Gevrey} spaces of periodic functions,
which are formed by infinitely-differentiable functions in $\Omega$; the associated $\phi$'s exhibit an
exponential decay, and for this reason such classes will be referred to as {\em exponential classes}.
Properties of both families are collected hereafter.

%------------------------------------------------------------------------------------------------
\subsection{Algebraic classes}\label{sec:nlb}
%------------------------------------------------------------------------------------------------
The following is the counterpart for Fourier approximations of by now well-known 
nonlinear approximation settings {\cite{DeVore:1998}}, e.g. for wavelets or nested finite elements. For this reason, we just state
definitions and properties without proofs.

For $s>0$, let us introduce the function 
\begin{equation}\label{eq:nlb.200}
\phi(N)=N^{-s/d} \qquad \qquad \text{for } N \geq 1\;,
\end{equation} 
and $\phi(0)=\phi_0>1$ arbitrary, with inverse 
\begin{equation}\label{eq:nlb.201}
\phi^{-1}(\lambda) = \lambda^{-d/s} \qquad \qquad \text{for } \lambda \leq 1\;,
\end{equation}
and let us consider the corresponding class ${\mathcal A}_\phi$ defined in (\ref{eq:nl.gen.001}).
\begin{definition}[{algebraic class of functions}]\label{def:ABes} 
We denote by ${\mathcal A}^{s}_B$ the subset of $V$ defined as
$$
{\mathcal A}^{s}_B {:= \Big\{ v \in V \ : \ \Vert v \Vert_{{\mathcal A}^{s}_B}:= 
\Vert v \Vert + \sup_{N \geq 1} \, E_N(v) \, N^{s/d}  < +\infty \Big\} \;.}
$$
\end{definition}
It is immediately seen that ${\mathcal A}^{s}_B$ contains the Sobolev space of periodic functions $H^{s+1}_p(\Omega)$.
On the other hand, it is proven in \cite{DeVore-Temlyakov:1995}, as a part of a more general result, that 
for $0 < \sigma, \tau \leq \infty$, the Besov space 
${B^{s+1}_{\tau,\sigma}(\Omega)}=B^{s+1}_\sigma(L^\tau(\Omega))$ 
is contained in ${\mathcal A}^{s^*}_B$ {provided $s^* := s-d(1/\tau-1/2)_+>0$.} 

Let us associate the quantity $\tau>0$ to the parameter $s$, via the relation
$$
\frac1\tau=\frac{s}{d} + \frac12 \;.
$$
The condition for a function $v$ to belong to some class ${\mathcal A}^{s}_B$ can be equivalently 
stated as a condition on the vector 
${\bm v}=(\hat{V}_k)_{k \in \mathbb{Z}^d}$ of its Fourier coefficients, precisely, on the rate of decay of
the non-increasing rearrangement ${\bm v}^* =(v^*_n)_{n \geq 1}$ of ${\bm v}$. 
\begin{definition}[{algebraic class of sequences}]\label{def:elpicBes}
{Let $\ell_B^{s}(\mathbb{Z}^d)$ be} the subset of sequences 
${\bm v} \in \ell^{2}(\mathbb{Z}^d)$ so that
$$
\Vert {\bm v} \Vert_{\ell_B^{s}(\mathbb{Z}^d)} := \sup_{n \geq 1} 
n^{1/\tau} |v_n^*| < +\infty \;.
$$
\end{definition}
\noindent Note that this space is often denoted by $\ell^\tau_w(\mathbb{Z}^d)$ in the literature, being an example
of Lorentz space.

The relationship between  ${\mathcal A}^{s}_B$ and $\ell_B^{s}(\mathbb{Z}^d)$ is stated
in the following Proposition.

\begin{proposition}[{equivalence of algebraic classes}]\label{prop:nlg1-alg}
Given a function $v \in V$ and the sequence ${\bm v}$ of its Fourier coefficients,
 one has  $v \in {\mathcal A}^{s}_B$ if and only if
${\bm v} \in \ell_B^{s}(\mathbb{Z}^d)$, with
$$
\Vert v \Vert_{{\mathcal A}^{s}_B} \lsim 
\Vert {\bm v} \Vert_{\ell_B^{s}(\mathbb{Z}^d)}
\lsim \Vert v \Vert_{{\mathcal A}^{s}_B}\,.  %\qquad\qquad \endproof
$$
\end{proposition}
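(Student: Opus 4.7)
The plan is to prove the two implications of the equivalence through direct manipulation of the non-increasing rearrangement $\bm{v}^* = (v_n^*)_{n\geq 1}$, relying only on the identity $E_N(v)^2 = \sum_{n>N} |v_n^*|^2$ and the definition $1/\tau = s/d + 1/2$.

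\textbf{Direction $\ell_B^s \Rightarrow \mathcal{A}^s_B$.} Suppose $\bm{v} \in \ell_B^s(\mathbb{Z}^d)$ with constant $C = \|\bm{v}\|_{\ell_B^s}$, so that $|v_n^*| \le C n^{-1/\tau}$ for all $n\ge 1$. I would then estimate directly
\[
E_N(v)^2 = \sum_{n>N} |v_n^*|^2 \le C^2 \sum_{n>N} n^{-2/\tau} = C^2 \sum_{n>N} n^{-(2s/d+1)} \lsim C^2 N^{-2s/d},
\]
using the integral comparison for the tail of a convergent $p$-series (valid since $2s/d>0$). Taking square roots gives $E_N(v) \lsim C N^{-s/d}$ for all $N\ge 1$, and $\|v\| = E_0(v) \le C$ is immediate from Parseval since $v_1^*\ge v_n^*$. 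Hence $v \in \mathcal{A}^s_B$ with $\|v\|_{\mathcal{A}^s_B} \lsim \|\bm{v}\|_{\ell_B^s}$.

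\textbf{Direction $\mathcal{A}^s_B \Rightarrow \ell_B^s$.} Suppose $v \in \mathcal{A}^s_B$, so $E_N(v) \le \|v\|_{\mathcal{A}^s_B} N^{-s/d}$ for $N\ge 1$. The key observation is that the monotonicity of $\bm{v}^*$ lets us bound a single coefficient by a tail sum on a dyadic block: for $n\ge 2$ and $M = \lfloor n/2 \rfloor$, we have
\[
(n - M)\,|v_n^*|^2 \le \sum_{k=M+1}^{n} |v_k^*|^2 \le \sum_{k > M} |v_k^*|^2 = E_M(v)^2 \lsim \|v\|_{\mathcal{A}^s_B}^2 \, M^{-2s/d}.
\]
Since $n - M \simeq n \simeq M$, this gives $|v_n^*|^2 \lsim \|v\|_{\mathcal{A}^s_B}^2 \, n^{-1-2s/d} = \|v\|_{\mathcal{A}^s_B}^2 \, n^{-2/\tau}$, whence $n^{1/\tau}|v_n^*| \lsim \|v\|_{\mathcal{A}^s_B}$. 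The case $n=1$ is handled by $|v_1^*| \le \|v\|$. Taking the supremum over $n$ yields $\bm{v} \in \ell_B^s$ with $\|\bm{v}\|_{\ell_B^s} \lsim \|v\|_{\mathcal{A}^s_B}$.

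There is no real obstacle: both inequalities are standard in nonlinear approximation theory (they are in fact the elementary basis for the equivalence between weak $\ell^\tau$ and approximation rates in $\ell^2$, going back to work on Besov spaces and wavelets). The only minor bookkeeping is the dyadic block trick in the second direction, which converts pointwise bounds on $v_n^*$ into tail-sum bounds on $E_N(v)$ and vice versa. All constants depend only on $s/d$.
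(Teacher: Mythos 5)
Your proof is correct. The paper in fact states this proposition without proof (it is declared a well-known fact of nonlinear approximation, with a pointer to the literature), so there is nothing to deviate from; your argument is the standard one and is essentially the same as the proof the paper \emph{does} give for the exponential counterpart (Proposition \ref{prop:nlg1}): a tail-sum estimate converts the pointwise decay of $v_n^*$ into a bound on $E_N(v)$, and the monotonicity-plus-block trick converts the decay of $E_N(v)$ back into a pointwise bound on $v_n^*$. The only cosmetic point is that $\Vert v \Vert = E_0(v)$ is bounded by a constant multiple of $\Vert \bm v \Vert_{\ell^s_B}$ through the convergent series $\sum_n n^{-2/\tau}$, i.e.\ $\lsim$ rather than $\le$, which is all the statement requires.
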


At last, we note that the quasi-Minkowski inequality
$$
\Vert {\bm u}+{\bm v} \Vert_{\ell_B^{s}(\mathbb{Z}^d)} \leq C_s \left(
\Vert {\bm u} \Vert_{\ell_B^{s}(\mathbb{Z}^d)} + \Vert {\bm v} \Vert_{\ell_B^{s}(\mathbb{Z}^d)} \right) 
$$
holds in $\ell_B^{s}(\mathbb{Z}^d)$, yet the constant $C_s$ blows up exponentially 
as $s \to \infty$.

%--------------------------------------------------------------------------------
\subsection{Exponential classes}\label{sec:nlg}
%--------------------------------------------------------------------------------
We first recall the definition of Gevrey spaces of periodic functions in $\Omega=(0,2\pi)^d$ 
(see \cite{Foias-Temam:1989}). Given reals $\eta > 0 $, {$0<t\le d$} 
and $s \geq 0$, we set
$$
G^{\eta,t, s}_p(\Omega) { ~:=\Big\{ v \in L^2(\Omega) \ : 
\ \Vert v \Vert_{G,\eta,t,s}^2 = 
\sum_{k\in\mathbb{Z}} \e^{2\eta |k|^t}(1+|k|^{2s}) |\hat{v}_k|^2 < +\infty \Big\} \;.}
$$ 
Note that $G^{\eta,t, s}_p(\Omega)$ is contained in all Sobolev spaces of periodic functions
$H^r_p(\Omega)$, $r \geq 0$. Furthermore, if $t \geq 1$, $G^{\eta,t, s}_p(\Omega)$ is made of 
analytic functions.

Gevrey spaces have been introduced to study the $C^\infty$ and analytical regularity of the
solutions of partial differential equations. For our elliptic problem 
(\ref{eq:four03}), {the following statement is an example of shift theorem in Gevrey spaces}.
\begin{theorem}[{shift theorem}]
{If the assumptions of Property
  \ref{prop:inverse.matrix-estimate} are satisfied,
then for any $\eta<\bar\eta_L$, ${0<t\leq 1}$ and $s \geq -1$, $L$ is an isomorphism between 
$G^{\eta,t, s+2}_p(\Omega)$ and $G^{\eta,t, s}_p(\Omega)$.}
\end{theorem}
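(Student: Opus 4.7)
\proof
The plan is to recast the shift theorem as a single isomorphism statement on a weighted $\ell^2$ space, then verify that statement by Schur's test applied to a suitable diagonal conjugate of $\mathbf A$.

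\medskip\noindent\emph{Reduction to a matrix estimate.}
Using the $H^1$- and $H^{-1}$-normalized coefficients $\hat U_k=c_k\hat u_k$, $\hat F_\ell=c_\ell^{-1}\hat f_\ell$ (so that $\mathbf A\mathbf U=\mathbf F$) and the elementary equivalence $1+|k|^{2s}\simeq(1+|k|^2)^s$, one checks that
$$
u\in G^{\eta,t,s+2}_p(\Omega) \iff \sum_{k}w_k^2|\hat U_k|^2<\infty,
\qquad
f\in G^{\eta,t,s}_p(\Omega)\iff \sum_{\ell}w_\ell^2|\hat F_\ell|^2<\infty,
$$
with the \emph{same} weight $w_k:=e^{\eta|k|^t}(1+|k|^2)^{(s+1)/2}$. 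The hypothesis $s\ge -1$ is used here to keep the polynomial factor of $w_k$ non-decreasing in $|k|$. Setting $W=\operatorname{diag}(w_k)$, the theorem thus reduces to showing that $\widetilde{\mathbf A}:=W\mathbf AW^{-1}$ is an isomorphism on $\ell^2(\mathbb Z^d)$, with inverse $W\mathbf A^{-1}W^{-1}$.

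\medskip\noindent\emph{Boundedness by Schur's test.}
I would estimate the entries $\widetilde a_{\ell,k}=(w_\ell/w_k)a_{\ell,k}$ using two ingredients. First, subadditivity of $x\mapsto x^t$ for $0<t\le 1$ gives $|\ell|^t\le|k|^t+|\ell-k|^t$, and since distinct integer points satisfy $|\ell-k|\ge 1$ we obtain $|\ell-k|^t\le|\ell-k|$, hence $e^{\eta(|\ell|^t-|k|^t)}\le e^{\eta|\ell-k|}$. Second, $1+|\ell|^2\le 2(1+|k|^2)(1+|\ell-k|^2)$ together with $s+1\ge 0$ yields $(1+|\ell|^2)^{(s+1)/2}/(1+|k|^2)^{(s+1)/2}\lesssim(1+|\ell-k|)^{s+1}$. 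Combining these with $|a_{\ell,k}|\le c_L e^{-\eta_L|\ell-k|}$, valid because $\mathbf A\in\mathcal D_e(\eta_L)$, produces
$$
|\widetilde a_{\ell,k}|\lesssim (1+|\ell-k|)^{s+1}\,e^{-(\eta_L-\eta)|\ell-k|},
$$
whose row and column sums over $\mathbb Z^d$ are uniformly bounded because $\eta<\bar\eta_L\le\eta_L$. Schur's test (as in Property \ref{prop:matrix-estimate}) then delivers the $\ell^2$-boundedness of $\widetilde{\mathbf A}$.

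\medskip\noindent\emph{Invertibility and main obstacle.}
For the inverse direction I would invoke Property \ref{prop:inverse.matrix-estimate}, which under the standing assumption yields $\mathbf A^{-1}\in\mathcal D_e(\bar\eta_L)$, and then rerun the identical Schur estimate with $\bar\eta_L$ in place of $\eta_L$. This is precisely where the sharp hypothesis $\eta<\bar\eta_L$ is consumed: the exponential growth of $w$ has to be strictly slower than the off-diagonal exponential decay of $\mathbf A^{-1}$. The main obstacle is the treatment of the exponential factor of the weight, and this is where the restriction $t\le 1$ is essential---for $t>1$ the subadditivity $|\ell|^t\le|k|^t+|\ell-k|^t$ fails and no such decoupling of the two indices would survive in the Schur bound; the polynomial factor, by contrast, is harmless because any fixed power of $|\ell-k|$ is absorbed into the positive gap $\eta_L-\eta$ (respectively $\bar\eta_L-\eta$) in the exponential decay rate.
\endproof
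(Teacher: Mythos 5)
Your proof is correct and takes essentially the same route as the paper's: conjugate $\mathbf{A}$ (resp.\ $\mathbf{A}^{-1}$) by a diagonal Gevrey weight, use subadditivity of $x\mapsto x^t$ for $0<t\le 1$ to decouple the indices in the exponential factor, and close with a Schur-type bound whose convergence consumes $\eta<\bar\eta_L$. The only differences are bookkeeping: you keep the $H^1_p/H^{-1}_p$ normalization and carry the Sobolev factor $(1+|k|^2)^{(s+1)/2}$ explicitly in the weight (absorbing it into the exponential gap via Peetre's inequality) and you verify both directions of the isomorphism, whereas the paper renormalizes the coefficient vectors in $H^{s+2}_p$ and $H^s_p$ so that the conjugating weight is purely exponential and only writes out the estimate for $\mathbf{W}\mathbf{A}^{-1}\mathbf{W}^{-1}$.
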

\begin{proof}
{Proceeding as in Sect. \ref{sec:algebraic_repres},
it is immediate to see} that the problem $Lu=f$ can be equivalently formulated as $\bA \bu = \bff$, where the vectors $\bff$ and $\bu$ contain the Fourier coefficients of functions $f$ and $u$ normalized in $H_p^s(\Omega)$ and $H_p^{s+2}(\Omega)$, respectively. 
{If $\bW=\text{diag}(\e^{\eta\vert k \vert^t})$ is a bi-infinite
  diagonal exponential matrix, then we can write 
$\bW\bu=\bW\bA^{-1}\bff=(\bW\bA^{-1} \bW^{-1})\bW\bff$. 
We observe that property
$\|\bW\bu\|_{\ell^2}\lesssim\|\bW\bff\|_{\ell^2}$,
which implies the thesis, is a consequence of
$\| \bW\bA^{-1}\bW^{-1} \|_{\ell^2}\lesssim 1 $.

To show the latter inequality, we let ${\bf x, \bf
  y}\in\ell^2(\mathbb{Z}^d)$ and notice that
\begin{equation*}
|{\bf y}^T \bW \bA^{-1}\bW^{-1} {\bf x}| \le c_L
\sum_{m\in\mathbb{Z}^d}  e^{-\bar\eta_L |m|} \sum_{k\in\mathbb{Z}^d} |y_{m+k}|
e^{\eta |m+k|^t} e^{-\eta |k|^t} |x_k|.
\end{equation*}
Since ${0<t\le 1}$, we deduce $|m+k|^t \le |m|^t + |k|^t$ and 
$e^{\eta (|m+k|^t- |k|^t)} \le e^{\eta |m|^t}$, whence
\[
\|\bW\bA^{-1}\bW^{-1}{\bf x}\| = \sup_{{\bf y}\in\mathbb{Z}^d}
\frac{|{\bf y}^T\bW\bA^{-1}\bW^{-1}{\bf x}|}{\|\bf y\|}
\le c_L
\sum_{m\in\mathbb{Z}^d} e^{(-\bar\eta_L + \eta) |m|^t} \|\bf x\|
\lsim \|\bf  x\|
\]
because $\bar\eta_L > \eta$ and the series converges. This implies
the desired estimate.
}
\end{proof}

From now on, we fix $s=1$ and we normalize again the Fourier coefficients of a function $v$
with respect to the $H^1_p(\Omega)$-norm. Thus, we set
\begin{equation}\label{eq:gevrey-class}
G^{\eta,t}_p(\Omega)= G^{\eta,t, 1}_p(\Omega)= 
\{ v \in V \ : \ \Vert v \Vert_{G,\eta,t}^2 = 
\sum_k \e^{2\eta |k|^t} |\hat{V}_k|^2 < +\infty \} \;.
\end{equation}
Functions in $G^{\eta,t}_p(\Omega)$ can be approximated by the linear orthogonal projection
$$
P_M v = \sum_{|k| \leq M} \hat{V}_k \phi_k \;,
$$
for which we have
\begin{eqnarray*}
\Vert v -P_M v \Vert^2 &=& \sum_{|k| > M} |\hat{V}_k|^2 
= \sum_{|k| > M} \e^{-2\eta |k|^t} \e^{2\eta |k|^t} |\hat{V}_k|^2  \\
&\leq & \e^{-2\eta M^t} \sum_{|k| > M} \e^{2\eta |k|^t} |\hat{V}_k|^2 \leq 
\e^{-2\eta M^t} \Vert v \Vert_{G,\eta,t}^2 \;.
\end{eqnarray*}
As already observed {in Property \ref{prop:matrix-estimate}}, 
setting $N={\rm card}\{k \, : \, |k|\leq M \}$, one has $N \sim \omega_d M^d$,
so that
\begin{equation}\label{eq:nlg.3}
E_N(v) \leq \Vert v -P_M v \Vert \lsim 
{\rm exp}\left(- \eta \omega_d^{-t/d} N^{t/d} \right) \Vert v \Vert_{G,\eta,t} \;.
\end{equation}
Hence, we are led to introduce the function 
\begin{equation}\label{eq:nlg.300}
\phi(N)={\rm exp}\left(- \eta \omega_d^{-t/d} N^{t/d} \right) \quad \qquad (N \geq 0) \;,
\end{equation} 
whose inverse is given by
\begin{equation}\label{eq:nlg.301}
\phi^{-1}(\lambda) = \frac{\omega_d}{\eta^{d/t}}\left( \log \frac1\lambda \right)^{d/t}
\quad \qquad (\lambda \leq 1) \;,
\end{equation}
and to consider the corresponding class ${\mathcal A}_\phi$ defined in (\ref{eq:nl.gen.001}), 
which therefore contains $G^{\eta,t}_p(\Omega)$.
\begin{definition}[{exponential class of functions}]\label{def:AGev} 
We denote by ${\mathcal A}^{\eta,t}_G$ the subset of $G^{\eta,t}_p(\Omega)$ defined as
$$
{\mathcal A}^{\eta,t}_G { := \Big\{ v \in V \ : 
\ \Vert v \Vert_{{\mathcal A}^{\eta,t}_G}:= 
\sup_{N \geq 0} \, E_N(v) \, {\rm exp}\left(\eta \omega_d^{-t/d}
  N^{t/d} \right) < +\infty \Big\} \;.}
$$
\end{definition}

At this point, we make the subsequent notation easier by introducing the $t$-dependent function
$$
\tau= \frac{t}{d} \le 1 \;.
$$
As in the algebraic case, the class ${\mathcal A}^{\eta,t}_G$ can be equivalently characterized in terms of 
behavior of rearranged sequences of Fourier coefficients. 
\begin{definition}[{exponential class of sequences}]\label{def:elpicGev}
{Let $\ell_G^{\eta,t}(\mathbb{Z}^d)$ be the} subset of sequences 
${\bv} \in \ell^{2}(\mathbb{Z}^d)$ so that \looseness=-1
$$
\Vert {\bv} \Vert_{\ell_G^{\eta,t}(\mathbb{Z}^d)} := \sup_{n \geq 1} 
n^{(1-\tau)/2} {\rm exp}\left(\eta \omega_d^{-\tau} n^{\tau} \right)
%\e^{\eta \omega_d^{t/d} n^{t/d}} 
|v_n^*| < +\infty \;,
$$
where {${\bv}^*=(v_n^*)_{n=1}^\infty$} is the non-increasing rearrangement of ${\bv}$.
\end{definition}

%%%%%%%%%%%

The relationship between  ${\mathcal A}^{\eta,t}_G$ and $\ell_G^{\eta,t}(\mathbb{Z}^d)$ is stated
in the following Proposition.
\begin{proposition}[{equivalence of exponential classes}]\label{prop:nlg1}
Given a function $v \in V$ and the sequence ${\bm v}=(\hat{V}_k)_{k \in \mathbb{Z}^d}$ of its Fourier coefficients,
 one has  $v \in {\mathcal A}^{\eta,t}_G$ if and only if
${\bv} \in \ell_G^{\eta,t}(\mathbb{Z}^d)$, with
$$
\Vert v \Vert_{{\mathcal A}^{\eta,t}_G} \lsim 
\Vert {\bv} \Vert_{\ell_G^{\eta,t}(\mathbb{Z}^d)}
\lsim \Vert v \Vert_{{\mathcal A}^{\eta,t}_G}\,.
$$
\end{proposition}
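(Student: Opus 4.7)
The plan is to prove the two inequalities separately, exploiting the fact that $E_N(v)^2 = \sum_{n>N} |v_n^*|^2$ once the Fourier coefficients are non-increasingly rearranged. The key asymmetry between the algebraic and exponential settings is the appearance of the polynomial prefactor $n^{(1-\tau)/2}$ in the definition of $\ell_G^{\eta,t}$; getting this factor with the correct exponent is the delicate point, so I will arrange both halves of the equivalence so that this polynomial factor arises naturally.

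\textbf{From $\mathcal{A}^{\eta,t}_G$ to $\ell_G^{\eta,t}$.} Let $v \in \mathcal{A}^{\eta,t}_G$ with $\|v\|_{\mathcal A^{\eta,t}_G} = C$, and let $(v_n^*)$ be the non-increasing rearrangement of its $H^1_p$-normalized Fourier coefficients. The monotonicity of $|v_n^*|$ gives, for any $0 \le M < n$,
\begin{equation*}
(n-M)|v_n^*|^2 \le \sum_{m=M+1}^{n} |v_m^*|^2 \le E_M(v)^2 \le C^2 \exp(-2\eta\omega_d^{-\tau} M^\tau).
\end{equation*}
The naive choice $M = \lfloor n/2\rfloor$ loses a factor $2^\tau$ in the exponent, so instead I will choose $M = n - \lceil c\, n^{1-\tau}\rceil$ for a suitable absolute constant $c>0$. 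A first-order expansion of $M^\tau = n^\tau(1 - cn^{-\tau}/1)^{\tau}$ shows that $n^\tau - M^\tau$ is bounded uniformly in $n$, hence $\exp(-2\eta\omega_d^{-\tau}M^\tau) \lesssim \exp(-2\eta\omega_d^{-\tau}n^\tau)$; meanwhile $n - M \simeq n^{1-\tau}$. Plugging in yields $|v_n^*|^2 \lesssim n^{\tau-1}\exp(-2\eta\omega_d^{-\tau}n^\tau)$, i.e.\ $n^{(1-\tau)/2}\exp(\eta\omega_d^{-\tau}n^\tau)|v_n^*| \lesssim C$, which is the desired bound $\|\bv\|_{\ell_G^{\eta,t}} \lesssim \|v\|_{\mathcal A^{\eta,t}_G}$.

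\textbf{From $\ell_G^{\eta,t}$ to $\mathcal{A}^{\eta,t}_G$.} Suppose now $\bv \in \ell_G^{\eta,t}(\mathbb{Z}^d)$ with norm $C'$. Then
\begin{equation*}
E_N(v)^2 = \sum_{n>N}|v_n^*|^2 \lesssim (C')^2 \sum_{n>N} n^{\tau-1}\exp(-2\eta\omega_d^{-\tau}n^\tau).
\end{equation*}
The summand is (eventually) monotone decreasing in $n$, so I can compare the tail sum with $\int_N^\infty x^{\tau-1}\exp(-2\eta\omega_d^{-\tau}x^\tau)\,dx$. The substitution $u = x^\tau$, $du = \tau x^{\tau-1}dx$, converts this into $\frac{1}{\tau}\int_{N^\tau}^\infty \exp(-2\eta\omega_d^{-\tau}u)\,du$, which is explicitly $\frac{\omega_d^\tau}{2\eta\tau}\exp(-2\eta\omega_d^{-\tau}N^\tau)$. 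Taking square roots gives $E_N(v) \lesssim \exp(-\eta\omega_d^{-\tau}N^\tau)$ for $N\ge1$; for $N=0$ the bound $E_0(v)=\|v\|$ follows from Parseval and $|v_1^*| \lesssim C'$, using that $\sum_n |v_n^*|^2$ is then controlled by the same integral estimate starting at $N=0$. This yields $\|v\|_{\mathcal A^{\eta,t}_G} \lesssim \|\bv\|_{\ell_G^{\eta,t}}$.

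\textbf{Anticipated difficulty.} The only nontrivial step is the forward direction, where the obvious choice $M = \lfloor n/2\rfloor$ gives the wrong exponential rate (degraded by a factor $2^\tau < 1$). Choosing $M$ so that $n-M \sim n^{1-\tau}$ is what simultaneously preserves the full rate $\eta\omega_d^{-\tau}n^\tau$ in the exponent \emph{and} produces exactly the polynomial prefactor $n^{(1-\tau)/2}$ that appears in the definition of $\ell_G^{\eta,t}$. The reverse direction is then an easy integral estimate, modulo verifying eventual monotonicity of the integrand $x^{\tau-1}\exp(-2\eta\omega_d^{-\tau}x^\tau)$, which holds for $x$ beyond a threshold depending only on $\eta,\tau,\omega_d$.
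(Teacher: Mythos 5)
Your proposal is correct and follows essentially the same route as the paper's proof: the forward direction uses the identical device of truncating at $M$ with $n-M\simeq n^{1-\tau}$ so that $n^\tau-M^\tau$ stays bounded (the paper takes $m$ the largest integer with $n-m\ge n^{1-\tau}$ and the same Taylor expansion), and the reverse direction is the same tail-sum-to-integral comparison with the substitution $z=x^\tau$. No substantive differences to report.
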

\begin{proof}
Assume first that ${\bv} \in \ell_G^{\eta,t}(\mathbb{Z}^d)$. Then, 
$$
E_N(v)^2 = \Vert v - P_N(v) \Vert^2 = \sum_{n>N} |v_n^*|^2
\lsim \sum_{n>N} n^{\tau-1} {\rm exp}\left(-2\eta \omega_d^{-\tau} n^{\tau} \right) 
\Vert{\bv}\Vert_{\ell_G^{\eta,t}(\mathbb{Z}^d)}^2 \;.
$$
Now, setting for simplicity $\alpha=2\eta \omega_d^{-\tau}$, one has
$$
S:=\sum_{n>N} n^{\tau-1} \e^{- \alpha n^{\tau}} \sim 
\int_N^\infty x^{\tau-1} \e^{- \alpha x^{\tau}} dx \;.
$$
The substitution $z=x^{\tau}$ yields
$$
S \sim \frac{d}{t} \int_{N^{\tau}}^\infty \e^{-\alpha z} dz =
\frac{d}{\alpha t} \e^{-\alpha N^{\tau}}
$$
{whence $\|v\|_{\mathcal{A}_G^{\eta,t}} \lsim 
\Vert {\bv} \Vert_{\ell_G^{\eta,t}(\mathbb{Z}^d)}$.}
Conversely, {let} $v \in {\mathcal A}^{\eta,t}_G$. We have to prove that for any $n \geq 1$, one 
has
$$
n^{1-\tau} |v_n^*|^2 \lsim \e^{-\alpha n^{\tau}} \Vert v \Vert_{{\mathcal A}^{\eta,t}_G} \;.
$$
Let $m<n$ be the largest integer such that $n-m \geq n^{1-\tau}$ (note that $0 \leq 1-\tau < 1$),
i.e., $m \sim n(1-n^{-\tau})$. Then,
$$
n^{1-\tau} |v_n^*|^2 \leq (n-m)|v_n^*|^2 \leq \sum_{j=m+1}^n |v_j^*|^2 \leq 
\Vert v - P_m(v) \Vert^2 \leq \e^{-\alpha m^{\tau}} \Vert v \Vert_{{\mathcal A}^{\eta,t}_G}^2 \;.
$$ 
Now, by Taylor expansion,
$$
m^{\tau} \sim n^{\tau}(1-n^{-\tau})^{\tau} = n^{\tau}\left(1- \tau n^{-\tau} + o(n^{-\tau})\right) =
n^{\tau} -\tau + o(1) \;,
$$ 
so that $ \e^{-\alpha m^{\tau}} \lsim  \e^{-\alpha n^{\tau}}$, and 
{$\Vert {\bv} \Vert_{\ell_G^{\eta,t}(\mathbb{Z}^d)} \lsim
\|v\|_{\mathcal{A}_G^{\eta,t}}$ is proven.}
\end{proof}

\medskip
Next, we briefly comment on the structure of the set $\ell_G^{\eta,t}(\mathbb{Z}^d)$. This is not
a vector space, since it may happen that ${\bu},\,{\bv}$ belong to this set, whereas
${\bu}+{\bv}$ does not. Assume for simplicity that $\tau=1$ and consider for instance 
the sequences in $\ell_G^{\eta,t}(\mathbb{Z}^d)$
\begin{eqnarray*}
{\bu} &=& \left(\e^{-\eta}, 0 , \e^{-2\eta}, 0 , \e^{-3 \eta}, 0 , \e^{-4 \eta}, 0 ,\dots \right) \;, \\
{\bv} &=& \left(0, \e^{-\eta}, 0 , \e^{-2\eta}, 0 , \e^{-3 \eta}, 0, \e^{-4 \eta}, \dots \right) \;,
\end{eqnarray*}
Then,
$$
{\bu}+{\bv} = ({\bu}+{\bv})^*= \left(\e^{-\eta},\e^{-\eta} , \e^{-2\eta},\e^{-2\eta} , \e^{-3 \eta},\e^{-3\eta} , \e^{-4 \eta}, \e^{-4\eta} ,\dots \right) \;;
$$
thus, $({\bu}+{\bv})^*_{2j}= \e^{-\eta j}$, so that $\e^{\eta 2j} ({\bu}+{\bv})^*_{2j}
\to \infty$ as $j \to +\infty$, i.e., 
{${\bu}+{\bv} \notin\ell_G^{\eta,t}(\mathbb{Z}^d)$}.
%ell_G^{\frac{\eta}{2},t}(\mathbb{Z}^d)\backslash.
On the other hand, we have the following property.

\begin{lemma}[{quasi-triangle inequality}]\label{L:nlg2}
If ${\bu}_i\in {\ell_G^{\eta_i,t}(\mathbb{Z}^d)}$ for $i=1,2$, then 
${\bu}_1+{\bu}_2 \in {\ell_G^{\eta,t}(\mathbb{Z}^d)}$ with
\[
{\|{\bu}_1+{\bu}_2\|_{\ell_{G}^{\eta,t}} \le
\|{\bu}_1\|_{\ell_{G}^{\eta_1,t}} + \|{\bu}_2\|_{\ell_{G}^{\eta_2,t}},}
\qquad
\eta^{-\frac{1}{\tau}} = \eta_1^{-\frac{1}{\tau}} +
\eta_2^{-\frac{1}{\tau}}.
\]
\end{lemma}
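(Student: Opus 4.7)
The strategy rests on two ingredients: the classical rearrangement inequality, and the optimal balancing of the splitting exponents dictated by the hypothesis $\eta^{-1/\tau} = \eta_1^{-1/\tau} + \eta_2^{-1/\tau}$.

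I would first record the elementary rearrangement inequality: for any sequences $\bw_1, \bw_2 \in \ell^2(\mathbb{Z}^d)$ and any integers $n_1, n_2 \ge 1$ with $n_1 + n_2 = n+1$,
\[
(\bw_1 + \bw_2)^*_n \le (\bw_1)^*_{n_1} + (\bw_2)^*_{n_2}.
\]
This follows from the fact that $\{k:|(w_1+w_2)_k| > (\bw_1)^*_{n_1}+(\bw_2)^*_{n_2}\}$ is contained in $\{k:|(w_1)_k|>(\bw_1)^*_{n_1}\} \cup \{k:|(w_2)_k|>(\bw_2)^*_{n_2}\}$, whose total cardinality is at most $(n_1-1)+(n_2-1) = n-1$.

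Next, introduce the scaling factors $\lambda_i := (\eta/\eta_i)^{1/\tau}$, which by hypothesis satisfy $\lambda_1 + \lambda_2 = 1$. For each $n\ge 1$, I would split as $n_1 = \lceil \lambda_1 n \rceil$, $n_2 = n+1-n_1$, so that $n_i \ge \lambda_i n$ and therefore $\eta_i n_i^\tau \ge \eta n^\tau$. Applying the norm definition gives $(\bu_i)^*_{n_i} \le \|\bu_i\|_{\ell_G^{\eta_i,t}}\, n_i^{-(1-\tau)/2} \exp(-\eta_i \omega_d^{-\tau} n_i^\tau)$; inserting this into the rearrangement inequality and multiplying by $n^{(1-\tau)/2} \exp(\eta\omega_d^{-\tau} n^\tau)$ yields
\[
n^{(1-\tau)/2} e^{\eta \omega_d^{-\tau} n^\tau}(\bu_1+\bu_2)^*_n \;\le\; \sum_{i=1}^{2} \Big(\tfrac{n}{n_i}\Big)^{(1-\tau)/2} \|\bu_i\|_{\ell_G^{\eta_i,t}},
\]
since the exponential factors combine into $\exp(\omega_d^{-\tau}(\eta n^\tau - \eta_i n_i^\tau)) \le 1$. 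Taking the supremum in $n$ gives the desired estimate.

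The delicate step is controlling the leftover polynomial prefactor $(n/n_i)^{(1-\tau)/2} \le \lambda_i^{-(1-\tau)/2}$. When $\tau = 1$ (the analytic case) this is identically $1$ and the inequality follows with the stated constant. For $\tau < 1$ the factor is a constant $> 1$, so to get the sharp statement one should perturb the splitting by an additive correction $n_i \mapsto n_i + O(n^{1-\tau})$; this produces an extra exponential gain of size $\exp(-C\omega_d^{-\tau}\tau n^{\tau-1}\cdot n^{1-\tau}) = \exp(-C')$ which absorbs the polynomial mismatch, while the perturbation of the sum $n_1+n_2$ remains compatible with the rearrangement inequality. This balancing is the main technical point of the argument; the rest is bookkeeping.
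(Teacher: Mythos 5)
Your route is genuinely different from the paper's, although the key balancing idea is the same. The paper works with the equivalent characterization by best $N$-term errors (Proposition \ref{prop:nlg1}): given $N$ it chooses $N_1+N_2=N$ with $\eta_1N_1^\tau=\eta_2N_2^\tau$, which by the hypothesis on $\eta$ equals $\eta N^\tau$, and then uses that $P_{N_1}u_1+P_{N_2}u_2$ is an admissible $N$-term approximant of $u_1+u_2$, so that $E_N(u_1+u_2)\le E_{N_1}(u_1)+E_{N_2}(u_2)\le(\|u_1\|_{{\mathcal A}_G^{\eta_1,t}}+\|u_2\|_{{\mathcal A}_G^{\eta_2,t}})\,e^{-\eta\omega_d^{-\tau}N^\tau}$. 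Working at the level of errors removes the algebraic prefactor $n^{(1-\tau)/2}$ from the picture entirely; your splitting $\lambda_i=(\eta/\eta_i)^{1/\tau}$, $\lambda_1+\lambda_2=1$, is exactly the same equioscillation, and your rearrangement inequality is the sequence-level counterpart of the support-counting step. Up to the ``delicate step'', your argument is correct and yields $\|\bu_1+\bu_2\|_{\ell_G^{\eta,t}}\le \max_i(\eta_i/\eta)^{(1-\tau)/(2\tau)}\,\big(\|\bu_1\|_{\ell_G^{\eta_1,t}}+\|\bu_2\|_{\ell_G^{\eta_2,t}}\big)$, which is all the paper ever uses (the lemma enters only through $\lsim$ bounds, e.g.\ in Proposition \ref{prop:unif-bound-res-exp}), and is in fact comparable to what the paper's own proof delivers for the $\ell_G$ quasi-norms, since that proof passes through the $\simeq$ equivalence of Proposition \ref{prop:nlg1}.

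The gap is in the final sharpening for $\tau<1$. The rearrangement inequality requires $n_1+n_2\le n+1$: if you enlarge the indices to $n_i+O(n^{1-\tau})$ you only control $(\bu_1+\bu_2)^*_m$ for $m=n_1+n_2-1>n$, which says nothing about the $n$-th rearranged entry. If instead you keep the sum fixed, any gain $\eta_1n_1^\tau-\eta n^\tau\approx \eta_1\tau(\lambda_1 n)^{\tau-1}a$ obtained by shifting $a$ units into $n_1$ is paid by a loss of the same order in $\eta_2n_2^\tau$, so you cannot make both exponential factors strictly contractive at once; the constant prefactors $\lambda_i^{-(1-\tau)/2}>1$ therefore cannot be absorbed by this perturbation, and the literal constant-one statement does not follow from your argument (for $\tau=1$ it does). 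So either present the result with the explicit constant $\max_i(\eta_i/\eta)^{(1-\tau)/(2\tau)}$ — harmless for the applications — or switch, as the paper does, to the best-$N$-term-error formulation where no polynomial prefactor ever appears.
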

\begin{proof}
We use the characterization given by Proposition \ref{prop:nlg1}, so that
$$
\Vert u_i - P_{N_i}(u_i) \Vert \le \|u_i\|_{\mathcal{A}_G^{\eta,t}} {\rm exp}\left(- \eta
  \omega_d^{-\tau} N_i^{\tau} \right)\quad i=1,2\;.
$$
Given $N\ge1$, we seek $N_1,N_2$ so that
\[
N = N_1+N_2, \qquad
\eta_1 N_1^\tau = \eta_2 N_2^\tau.
\]
This implies
\[
N = N_1 \eta_1^{\frac{1}{\tau}} {\Big(\eta_1^{-\frac{1}{\tau}}+
\eta_2^{-\frac{1}{\tau}} \Big)} = N_1 \eta_1^{\frac{1}{\tau}} \eta^{-\frac{1}{\tau}},
\] 
and
{
\begin{align*}
\|(u_1+u_2)-P_N(u_1+u_2)\| 
& \le \|u_1 - P_{N_1}(u_1)\| + \|u_2 - P_{N_2}(u_2)\| \\
& \le \|u_1\|_{{\mathcal A}_G^{\eta_1,t}} {\rm exp} (-\eta_1
\omega_d^{-\tau} N_1^\tau)
+ |u_2|_{{\mathcal A}_G^{\eta_2,t}} {\rm exp} (-\eta_2 \omega_d^{-\tau}N_2^\tau)
\big)
\\
&
\le \big(\|u_1\|_{{\mathcal A}_G^{\eta_1,t}} + 
\|u_2\|_{{\mathcal A}_G^{\eta_2,t}}\big) {\rm exp} (-\eta \omega_d^{-\tau} N^\tau).
\end{align*}
}
whence the {assertion}.
\end{proof}

{Note that when $\eta_1=\eta_2$ we obtain $\eta = 2^{-\tau}\eta_1\le
  2^{-1}\eta_1$
  thereby extending the previous counterexample.}

%%%%%%%%%%%%%%%%%%%%%%%%%%%%%%%%%%%%%%%%%%%%%%%%%%%%%%%%%%%%%%%%%%%%%%%%%%%%%%%%%%%%%%
\section{Sparsity classes of the residual}\label{sec:spars-res}
%%%%%%%%%%%%%%%%%%%%%%%%%%%%%%%%%%%%%%%%%%%%%%%%%%%%%%%%%%%%%%%%%%%%%%%%%%%%%%%%%%%%%%

For any finite index set $\Lambda$, let $r=r(u_\Lambda)$ be the residual produced by the 
Galerkin solution $u_\Lambda$. Under Assumption \ref{ass:minimality}, the step
$$
\partial\Lambda := \text{\bf D\"ORFLER}(r, \theta)
$$
selects a set {$\partial\Lambda$}
of minimal cardinality in $\Lambda^c$ for which $\Vert r-P_{\partial\Lambda}r \Vert
\leq \sqrt{1-\theta^2} \Vert r \Vert$. Thus, 
if $r$ belongs to a certain sparsity class ${\mathcal A}_{\bar{\phi}}$, identified by a function $\bar{\phi}$,
then (\ref{eq:nl.gen.2o}) yields
\begin{equation}\label{eq:boundDorfler} 
|\partial\Lambda| \leq {\bar{\phi}}^{-1}\left(\sqrt{1-\theta^2} \, 
\frac{\Vert r \Vert}{\ \Vert r \Vert_{{\mathcal A}_{\bar{\phi}}}}\right) +1 \;.
\end{equation}
 Explicitly, if $r \in {\mathcal A}_B^{\bar{s}}$ for some $\bar{s}>0$, we have by (\ref{eq:nlb.201})
$$
|\partial\Lambda| \leq (1-\theta^2)^{-d/{2\bar{s}}}
\left(\frac{\ \Vert r \Vert_{{\mathcal A}_B^{\bar{s}}}}{\Vert r \Vert}\right)^{d/\bar{s}} +1 \;,
$$
whereas if $r \in {\mathcal A}^{\bar{\eta},\bar{t}}_G$ for some $\bar{\eta}>0$ and $\bar{t}>0$, we have by (\ref{eq:nlg.301})
$$
|\partial\Lambda| \leq \frac{\omega_d}{\eta^{d/\bar{t}}} \left(
 \log \frac{\Vert r \Vert_{{\mathcal A}^{\bar{\eta},\bar{t}}_G}}{\Vert r \Vert} + |\log \sqrt{1-\theta^2}|
 \right)^{d/\bar{t}} + 1 \;.
$$
We stress the fact that the cardinality of $\partial\Lambda$ is related to the 
{\sl sparsity class of the residual}. {We will see in the rest
of this section that} such a class does coincide with the sparsity class of the solution in the algebraic case, whereas it is different
(indeed, worse)  in the exponential case. This is a crucial point to be kept in mind in the forthcoming optimality 
analysis of our algorithms. 

The cardinality of $\partial\Lambda$ depends indeed on how much the sparsity measure  
$\Vert r \Vert_{{\mathcal A}_{\bar{\phi}}}$ deviates from the {Hilbert} norm $\Vert r \Vert$. So, before embarking ourselves {on} the
study of the relationship between the sparsity classes of the residual and of the solution, we make some brief comments
on the ratio between these two quantities. For shortness, we only consider the exponential case, although similar
considerations apply to the algebraic case as well. The size of the ratio
$$
Q := \frac{\  \Vert r \Vert_{{\mathcal A}^{\bar{\eta},\bar{t}}_G}}{\Vert r \Vert} \; 
$$
depends on the relative behavior of the rearranged
coefficients $r_n^*$ of $r$, which by Definition \ref{def:elpicGev} and Proposition \ref{prop:nlg1}  satisfy
\begin{equation}\label{rearranged-res}
|r_n^*| \leq \lambda^* n^{(\bar{\tau}-1)/2} {\rm e}^{-\bar{\eta} \omega_d^{-\bar{\tau}} n^{\bar{\tau}}}
\Vert r \Vert_{{\mathcal A}^{\bar{\eta},\bar{t}}_G} 
\end{equation}
for some constant $\lambda^*>0$, with $\bar{\tau}=\bar{t}/d$. Let us consider two representative situations.

\begin{example}[{\em genuinely decaying functions}]\label{E:genuinely-decaying}
\rm
The most ``favorable'' situation is the one
in which the sequence of rearranged coefficients decays precisely at
the rate given by the right-hand side of {\eqref{rearranged-res}}; 
in other words, suppose that there exists a constant $\lambda_*>0$ such that for all $n \geq 1$
\begin{equation}\label{eq:spars20}
\lambda_* n^{(\bar{\tau}-1)/2} {\rm e}^{-\bar{\eta} \omega_d^{-\bar{\tau}} n^{\bar{\tau}}}
\Vert r \Vert_{{\mathcal A}^{\bar{\eta},\bar{t}}_G} \, \leq \,
|r_n^*| \, \leq \, \lambda^* n^{(\bar{\tau}-1)/2} {\rm e}^{-\bar{\eta} \omega_d^{-\bar{\tau}} n^{\bar{\tau}}}
\Vert r \Vert_{{\mathcal A}^{\bar{\eta},\bar{t}}_G} \;.
\end{equation}
Then,
$$
(\lambda_*)^2 \sum_{n \geq 1} n^{(\bar{\tau}-1)} {\rm e}^{-2 \bar{\eta} \omega_d^{-\bar{\tau}} n^{\bar{\tau}}}
\Vert r \Vert_{{\mathcal A}^{\bar{\eta},\bar{t}}_G}^2 \, \leq \, \Vert r \Vert^2 \, \leq \, 
(\lambda^*)^2 \sum_{n \geq 1} n^{(\bar{\tau}-1)} {\rm e}^{-2 \bar{\eta} \omega_d^{-\bar{\tau}} n^{\bar{\tau}}}
\Vert r \Vert_{{\mathcal A}^{\bar{\eta},\bar{t}}_G}^2 \;,
$$
and since
$$
\sum_{n \geq 1} n^{(\bar{\tau}-1)} {\rm e}^{-2 \bar{\eta} \omega_d^{-\bar{\tau}} n^{\bar{\tau}}}
\sim \int_{1}^{+\infty} x^{\bar{\tau}-1} {\rm e}^{-2 \bar{\eta} \omega_d^{-\bar{\tau}} x^{\bar{\tau}}} \, dx =
\int_{1}^{+\infty} {\rm e}^{-2 \bar{\eta} \omega_d^{-\bar{\tau}} y} \, dy =C \;,
$$
we obtain 
$$
\frac1{C \lambda^*} \leq Q \leq \frac1{C \lambda_*} \;.
$$
Thus, if (\ref{eq:spars20}) is a ``tight'' bound, the ratio $Q$ is ``small'', and the procedure {\bf D\"ORFLER} 
activates a moderate number of degrees of freedom at the current
iteration. \endproof
\end{example}

\begin{example}[{\em plateaux}]\label{E:plateaux}
\rm
The opposite situation, i.e., the worst scenario, occurs when the sequence of rearranged coefficients of $r$ exhibits
large ``plateaux'' consisting of equal (or nearly equal) elements in modulus. 
Fix an integer $K$ arbitrarily large, and suppose that the $K$ largest coefficients of $r$ satisfy
$$
|r^*_1|=|r^*_2|= \cdots =|r^*_{K-1}|=|r^*_{K}| 
= \lambda^* K^{(\bar{\tau}-1)/2} {\rm e}^{-\bar{\eta} \omega_d^{-\bar{\tau}} K^{\bar{\tau}}}
\Vert r \Vert_{{\mathcal A}^{\bar{\eta},\bar{t}}_G} \;.
$$
Since 
$$
\sum_{n > K} n^{(\bar{\tau}-1)} {\rm e}^{-2 \bar{\eta} \omega_d^{-\bar{\tau}} n^{\bar{\tau}}} \sim
\int_{(K+1)^{\bar{\tau}}}^{+\infty} {\rm e}^{-2 \bar{\eta} \omega_d^{-\bar{\tau}} y} \, dy 
= {\rm e}^{-2 \bar{\eta} \omega_d^{-\bar{\tau}}(K+1)^{\bar{\tau}} } < 
{\rm e}^{-2 \bar{\eta} \omega_d^{-\bar{\tau}} K^{\bar{\tau}} } \;,
$$
there exists $\delta \in (0,1)$ such that 
$$
\Vert r \Vert^2 = (\lambda^*)^2 (K+\delta)^{\bar{\tau}}{\rm e}^{-2 \bar{\eta} \omega_d^{-\bar{\tau}} K^{\bar{\tau}} }
\Vert r \Vert_{{\mathcal A}^{\bar{\eta},\bar{t}}_G}^2 \;.
$$
We conclude that the ratio
$$
Q=\frac{{\rm e}^{\bar{\eta} \omega_d^{-\bar{\tau}} K^{\bar{\tau}}}}{\lambda^* (K+\delta)^{\bar{\tau}/2}}
$$
turns out to be arbitrarily large, and indeed for such a residual it is easily seen that D\"orfler's condition 
$\Vert P_{\partial\Lambda}r \Vert \geq \theta \Vert r \Vert $ requires $|\partial\Lambda|$ 
to be of the order of $\theta K$. \endproof
\end{example}

Let us now investigate the sparsity classes of the residual, treating the algebraic and exponential cases separately.
Note that, in view of Propositions \ref{prop:nlg1-alg} or \ref{prop:nlg1}, for studying the sparsity classes of
certain functions $v$ and $Lv$ we are entitled to study, equivalently, 
the sparsity classes of the related 
vectors $\mathbf{v}$ and $\mathbf{A} \mathbf{v}$, where $\mathbf{A}$ is the stiffness matrix (\ref{eq:four100}).

%------------------------------------------------------------------------------------
\subsection{Algebraic case}\label{S:algebraic-case}
%------------------------------------------------------------------------------------
We first recall the notion of matrix compressibility 
(see \cite{CDDV:1998} where the concept has been used in the wavelet context).
\begin{definition}[{matrix compressibility}]\label{def:compress}
For $s^*>0$, a bounded matrix $\mathbf{A}:\ell^2(\mathbb{Z}^d)\to\ell^2(\mathbb{Z}^d)$ 
is called $s^*$-compressible if for any $j\in\mathbb{N}$ there exist constants 
$\alpha_j$ and $C_j$ and a matrix $\mathbf{A}_j$ having at most $\alpha_j 2^j$ non-zero entries per column, such that 
\[
\| \mathbf{A}-\mathbf{A}_j\| \leq C_j
\]
where $\{\alpha_j\}_{j\in\mathbb{N}}$ is summable, and for any $s<s^*$, 
$\{ C_j 2^{sj/d}\}$ is summable. 
\end{definition}
Concerning the compressibility of the matrices belonging to the 
class ${\mathcal D}_a(\eta_L)$ {of Definition \ref{def:class.matrix}}, 
the following result can be found in \cite[Lemma 3.6]{Dahlke-Fornasier-Groechenig:2010}.  We report here the proof for completeness.
\begin{lemma}[{compressibility}]\label{lm:compr}
{If
 $s^*:=\eta_L-d>0$, then any matrix
 $\mathbf{A}\in\mathcal{D}_a(\eta_L)$ 
is $s^*$-compressible.}
 \end{lemma}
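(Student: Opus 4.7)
The plan is to build $\mathbf{A}_j$ as the symmetric truncation $\mathbf{A}_{J_j}$ defined in \eqref{eq:trunc-matr}, and then calibrate the truncation radius $J_j$ so that both the entry count per column and the norm error $\|\mathbf{A}-\mathbf{A}_{J_j}\|$ meet the two summability requirements in Definition \ref{def:compress}.

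First I would record the two building blocks that are already available. On the one hand, the truncation $\mathbf{A}_J$ is zero outside the band $|\ell-k|\le J$, so the number of non-zero entries in any column is bounded by $|\overline{B_d(0,J)}\cap\mathbb{Z}^d|\lesssim \omega_d(J+1)^d$ (this is the same count used in Property \ref{prop:matrix-estimate} and in \eqref{eq:estim-enrich}). On the other hand, since $\mathbf{A}\in\mathcal{D}_a(\eta_L)$ with $\eta_L>d$, the algebraic part of Property \ref{prop:matrix-estimate} gives
\[
\|\mathbf{A}-\mathbf{A}_J\|\;\le\; C_{\mathbf{A}}\,(J+1)^{-(\eta_L-d)}\;=\;C_{\mathbf{A}}\,(J+1)^{-s^{*}}.
\]

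Second, I would choose $J_j$ so that $(J_j+1)^d\simeq 2^{j}/(j+1)^{2}$, i.e.\ $J_j=\bigl\lceil 2^{j/d}(j+1)^{-2/d}\bigr\rceil$ for $j$ large (and $J_0=0$, say). With this choice the number of non-zero entries per column of $\mathbf{A}_j:=\mathbf{A}_{J_j}$ is at most
\[
\omega_d(J_j+1)^d\;\lesssim\;\frac{2^{j}}{(j+1)^{2}}\;=:\;\alpha_j 2^{j},
\]
with $\alpha_j\lesssim(j+1)^{-2}$, and the norm bound above yields
\[
\|\mathbf{A}-\mathbf{A}_j\|\;\lesssim\;\bigl(2^{j/d}(j+1)^{-2/d}\bigr)^{-s^{*}}\;\simeq\;2^{-js^{*}/d}(j+1)^{2s^{*}/d}\;=:\;C_j.
\]

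Finally I would verify the two summability conditions. The sequence $\alpha_j\lesssim (j+1)^{-2}$ is obviously summable. For any fixed $s<s^{*}$,
\[
\sum_{j\ge 0} C_j\, 2^{sj/d}\;\lesssim\;\sum_{j\ge 0}(j+1)^{2s^{*}/d}\,2^{-j(s^{*}-s)/d}\;<\;\infty,
\]
because the geometric factor dominates the polynomial one. Hence $\mathbf{A}$ is $s^{*}$-compressible, as required.

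There is no real obstacle: the entire argument is a calibration of $J_j$, where the $(j+1)^{-2}$ factor is inserted solely to turn the natural bounded sequence of entry counts into a summable one, while spending only a harmless polynomial factor on the norm side that is absorbed by the geometric gain $2^{-j(s^{*}-s)/d}$. The only point that requires a touch of care is to make sure $J_j$ is a non-negative integer and that the bound from Property \ref{prop:matrix-estimate} is applied uniformly in $j$, which is automatic since that property is valid for all $J\ge 0$.
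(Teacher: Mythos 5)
Your proof is correct and follows essentially the same route as the paper's: a symmetric band truncation $\mathbf{A}_{J_j}$ with radius calibrated so the per-column entry count is $\simeq 2^j(j+1)^{-2}$, the norm error controlled via Property \ref{prop:matrix-estimate} (algebraic case), and the two summability conditions checked directly. The only difference is cosmetic — the paper takes $N_j\simeq 2^{j/d}(j+1)^{-2}$ (giving $\alpha_j\simeq(j+1)^{-2d}$ and $C_j\simeq 2^{-js^*/d}(j+1)^{2s^*}$) while you take $J_j\simeq 2^{j/d}(j+1)^{-2/d}$, and both calibrations satisfy Definition \ref{def:compress}.
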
 
\begin{proof}
Let us take {$N_j=\lceil\frac{2^{j/d}}{(j+1)^2}\rceil$, where
  $\lceil\cdot\rceil$ denotes the integer part plus $1$}. Then  
by Property \ref{prop:matrix-estimate} (algebraic case) there holds 
$\|\mathbf{A}-\mathbf{A}_{N_j}\|\lesssim 2^{-j(\eta_L-d)/d}\ (j+1)^{2(\eta_L-d)}=:C_j$ and 
$\mathbf{A}_{N_j}$ has $\alpha_j 2^j$ {non-vanishing}
entries per column with {$\alpha_j \approx 2^d (j+1)^{-2d}$}.  
It is immediate to verify that $\sum_j \alpha_j<\infty$. Moreover, for 
$s<s^*$ and setting $\delta=s^*-s$, we clearly have 
$\sum_j C_j 2^{js/d}=\sum_j 2^{-j\delta/d} (j+1)^{2s^*} < \infty$.
\end{proof}

We now consider the continuity properties of the operator $L$ between sparsity spaces.
The following result is well known (see
e.g. \cite{Dahlke-Fornasier-Raasch:2007}) and {its} proof is here
reported for completeness.
\begin{proposition}[{continuity  of $L$ in $\mathcal{A}_B^s$}]\label{prop:A-continuity-alg}
Let $\mathbf{A}\in\mathcal{D}_a(\eta_L)$, $\eta_L>d$ and $s^*={\eta_L-d}$. 
For any $s<s^*$, if $v \in {\mathcal A}_B^s$ then  $Lv \in {\mathcal A}_B^s$, with
$$
\Vert Lv \Vert_{{\mathcal A}_B^s} \lsim \Vert v \Vert_{{\mathcal A}_B^s} \;.
$$
The constants appearing in the bounds go to infinity as $s$ approaches $s^*$.
\end{proposition}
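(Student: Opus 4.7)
The plan is to exploit the $s^*$-compressibility of $\mathbf{A}$ (Lemma \ref{lm:compr}) together with a dyadic decomposition of the coefficient sequence of $v$. By Proposition \ref{prop:nlg1-alg}, it suffices to prove the statement at the level of sequences: given $\mathbf{v}\in\ell_B^s(\mathbb{Z}^d)$, show that $\mathbf{A}\mathbf{v}\in\ell_B^s(\mathbb{Z}^d)$ with $\|\mathbf{A}\mathbf{v}\|_{\ell_B^s}\lesssim\|\mathbf{v}\|_{\ell_B^s}$, and thereby deduce the estimate for $Lv$.

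First I would build a dyadic hierarchy of best $N$-term approximations. Rearranging the entries of $\mathbf{v}$ by decreasing modulus gives nested index sets $\Lambda_j\subset\mathbb{Z}^d$ of cardinality $2^j$; put $\mathbf{v}_j:=P_{\Lambda_j}\mathbf{v}$, $\mathbf{v}_{-1}:=0$, and $\mathbf{d}_j:=\mathbf{v}_j-\mathbf{v}_{j-1}$. Then $|\text{supp}\,\mathbf{d}_j|\le 2^j$, $\mathbf{v}=\sum_j\mathbf{d}_j$, and the assumption $\mathbf{v}\in\ell_B^s$ together with the definition of $E_N(v)$ yields
$$
\|\mathbf{d}_j\|\le\|\mathbf{v}-\mathbf{v}_{j-1}\|+\|\mathbf{v}-\mathbf{v}_j\|\lesssim 2^{-(j-1)s/d}\|\mathbf{v}\|_{\ell_B^s}.
$$
For a target level $J\ge 1$, let $\mathbf{A}_k$ be the compressed matrices from Definition \ref{def:compress}, with $\alpha_k 2^k$ non-zero entries per column and $\|\mathbf{A}-\mathbf{A}_k\|\le C_k$; define the sparse surrogate
$$
\widetilde{\mathbf{w}}_J:=\sum_{j=0}^{J}\mathbf{A}_{J-j}\mathbf{d}_j.
$$
Its support has cardinality at most $\sum_{j=0}^{J}2^j\cdot\alpha_{J-j}2^{J-j}=2^J\sum_{k=0}^{J}\alpha_k\le C_\alpha 2^J$, using summability of $\{\alpha_k\}$.

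Next I would split the error as
$$
\|\mathbf{A}\mathbf{v}-\widetilde{\mathbf{w}}_J\|\le\|\mathbf{A}(\mathbf{v}-\mathbf{v}_J)\|+\sum_{j=0}^{J}\|(\mathbf{A}-\mathbf{A}_{J-j})\mathbf{d}_j\|.
$$
Boundedness of $\mathbf{A}$ on $\ell^2$ (Property \ref{prop:bounded}) gives $\|\mathbf{A}(\mathbf{v}-\mathbf{v}_J)\|\lesssim 2^{-Js/d}\|\mathbf{v}\|_{\ell_B^s}$. Inserting the bound on $\|\mathbf{d}_j\|$ and reindexing $k=J-j$,
$$
\sum_{j=0}^{J}\|(\mathbf{A}-\mathbf{A}_{J-j})\mathbf{d}_j\|\lesssim\|\mathbf{v}\|_{\ell_B^s}\,2^{-Js/d}\sum_{k=0}^{J}C_k 2^{ks/d},
$$
and the last sum is finite because $s<s^*$ and $\{C_k 2^{ks/d}\}$ is summable. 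Hence $E_{C_\alpha 2^J}(\mathbf{A}\mathbf{v})\le\|\mathbf{A}\mathbf{v}-\widetilde{\mathbf{w}}_J\|\lesssim 2^{-Js/d}\|\mathbf{v}\|_{\ell_B^s}$. Choosing $J$ so that $2^J\simeq N$ yields $E_N(\mathbf{A}\mathbf{v})\lesssim N^{-s/d}\|\mathbf{v}\|_{\ell_B^s}$, which is the desired membership in $\ell_B^s$ with controlled norm.

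The main obstacle is tracking the dependence of the implicit constants on $s$ so as to justify the last sentence of the proposition. The blow-up as $s\to s^*$ is inherent in the argument: the constant multiplying $2^{-Js/d}$ equals (up to fixed factors) $\sum_{k\ge 0}C_k 2^{ks/d}$, which converges for every $s<s^*=\eta_L-d$ (cf.\ the proof of Lemma \ref{lm:compr}, where $C_k\sim 2^{-k(\eta_L-d)/d}(k+1)^{2(\eta_L-d)}$) but diverges at $s=s^*$, so its sum grows without bound as $s\uparrow s^*$. Finally, the result for $Lv$ follows from the algebraic equivalence of Proposition \ref{prop:nlg1-alg} applied simultaneously to $v$ and $Lv$, since $\mathbf{A}\mathbf{v}$ is precisely the normalized Fourier coefficient sequence of $Lv$.
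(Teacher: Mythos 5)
Your proof is correct and follows essentially the same route as the paper: a dyadic decomposition of $\mathbf{v}$ into best $2^j$-term increments, the sparse surrogate $\mathbf{w}_J=\sum_{j}\mathbf{A}_{J-j}(\mathbf{v}_j-\mathbf{v}_{j-1})$, the same error splitting, and the same source of blow-up as $s\uparrow s^*$. The only cosmetic difference is that you invoke the abstract $s^*$-compressibility data $(\alpha_k,C_k)$ of Lemma \ref{lm:compr}, while the paper works directly with the truncations $\mathbf{A}_{N_j}$, $N_j=\lceil 2^{j/d}/(j+1)^2\rceil$, which is exactly how that lemma is proved.
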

\begin{proof}
Let us choose {$N_j=\lceil\frac{2^{j/d}}{(j+1)^2}\rceil$} as in
the proof of Lemma \ref{lm:compr}. {If we} set $\mathbf{A}_j:=\mathbf{A}_{N_j}$,  then 
by Property \ref{prop:matrix-estimate} (algebraic case) we have
$$
\Vert \mathbf{A}-\mathbf{A}_{j} \Vert  \lesssim 2^{-j(\eta_L-d)/d}\ (j+1)^{2(\eta_L-d)}=2^{-j s^*/d}\ (j+1)^{2s^*} . 
$$
On the other hand, for any $j \geq 0$, let ${\bf v}_j=P_j({\bf v})$ be a best $2^j$-term approximation of ${\bf v}\in\ell_B^s$, 
which therefore satisfies $\Vert {\bf v}-{\bf v}_j \Vert \leq 2^{-js/d}
\Vert {\bf v } \Vert_{\ell_B^s}$.
Note that the difference ${\bf v}_j - {\bf v}_{j-1}$ 
satisfies as well
$$
\Vert {\bf v}_j - {\bf v}_{j-1} \Vert \lsim  2^{-js/d}
\Vert {\bf v } \Vert_{\ell_B^s} \;. 
$$
Let 
$$
{\bf w}_J = \sum_{j=0}^J \mathbf{A}_{J-j}({\bf v}_j-{\bf v}_{j-1}) \;,
$$
where we set ${\bf v}_{-1}={\bf 0}$.
Writing ${\bf v}={\bf v}-{\bf v}_J + \sum_{j=0}^J ({\bf v}_j-{\bf v}_{j-1})$, we obtain 
$$
\mathbf{A}{\bf v}-{\bf w}_J = \mathbf{A}({\bf v}-{\bf v}_J)
+ \sum_{j=0}^J (\mathbf{A}-\mathbf{A}_{J-j})({\bf v}_j-{\bf v}_{j-1})\;.
$$ 
The last equation yields 
\begin{eqnarray}
\|\mathbf{A}{\bf v}-{\bf w}_J\| &\leq& \|\mathbf{A}\| \|{\bf v}-{\bf v}_J\|
+ \sum_{j=0}^J {\|\mathbf{A}-\mathbf{A}_{J-j}\|} \|{\bf v}_j-{\bf v}_{j-1}\|\nonumber\\
&\lesssim& \left(2^{-Js/d} + \sum_{j=0}^J  2^{-(J-j) s^*/d}\ (J-j+1)^{2s^*}  2^{-js/d}\right)
\|\bv\|_{\ell^s_B}\nonumber\\
&\lesssim& 2^{-Js/d}\left( 1 + \sum_{j=0}^J  2^{-(J-j) (s^*-s)/d}\ (J-j+1)^{2s^*} \right)
\|\bv\|_{\ell^s_B} \nonumber\\
 &\lesssim& 2^{-Js/d}\|\bv\|_{\ell^s_B},\nonumber
\end{eqnarray}
where the series $ \sum_{k} 2^{-k(s^*-s)/d}\ (k+1)^{2s^*}$ is
convergent but degenerates as $s$ approaches $s^*$. 
Finally, by construction {$\bw_J$} belongs to a finite dimensional space $V_{\Lambda_J}$, where
\[
\vert {\Lambda_J} \vert {\lsim \omega_d} \sum_{j=0}^J N_{J-j}^d\lesssim 2^J\sum_{j=0}^J (J-j+1)^{-2d}\lesssim  2^J\ .
\]
This implies $\| {\mathbf{A}}{\bf{v}}\|_{\ell_B^s} \lesssim \|{\bf{v}}\|_{\ell_B^s} $ for any $s<s^*$.
\end{proof}

At last, we  discuss the sparsity class of the residual
$r=r(u_\Lambda)$ for some Galerkin solution $u_\Lambda$.
\begin{proposition}[{sparsity class of the residual}]\label{prop:unif-bound-res-alg}
Let the assumptions of Property \ref{prop:inverse.alg} be satisfied, 
and set {$s^*=\eta_L-d$}. 
For any $s<s^*$, if $u \in {\mathcal A}_B^s$ then  $r(u_\Lambda) \in {\mathcal A}_B^s$ for any index set $\Lambda$, with
$$
\Vert r(u_\Lambda) \Vert_{{\mathcal A}_B^s} \lsim \Vert u \Vert_{{\mathcal A}_B^s} \;.
$$
\end{proposition}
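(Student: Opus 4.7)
The plan is to write $\br = \bff - \bA \hat\bu_\Lambda$, where $\hat\bu_\Lambda$ denotes the extended Galerkin solution of \eqref{eq:inf-pb-galerkin}, i.e.\ $\widehat\bA_\Lambda \hat\bu_\Lambda = \bP_\Lambda \bff$, and to use the quasi-triangle inequality in $\mathcal{A}_B^s$ together with the continuity of $\bA$ on $\mathcal{A}_B^s$ (Proposition \ref{prop:A-continuity-alg}) to reduce the whole statement to proving
\[
\|\hat\bu_\Lambda\|_{\mathcal{A}_B^s} \lsim \|\bu\|_{\mathcal{A}_B^s}
\]
with a hidden constant independent of $\Lambda$. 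Since Proposition \ref{prop:nlg1-alg} identifies $\mathcal{A}_B^s$ with $\ell_B^s(\mathbb{Z}^d)$, I will work throughout at the level of sequences.

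The first key observation is that the bi-infinite matrix $\widehat\bA_\Lambda = \bP_\Lambda\bA\bP_\Lambda + \bQ_\Lambda$ belongs to $\mathcal{D}_a(\eta_L)$ with a constant depending only on $c_L$ (and not on $\Lambda$): indeed, in the block decomposition induced by $\Lambda$ and $\Lambda^c$, it is block-diagonal with blocks $\bA_\Lambda$ and the identity, so its off-diagonal decay is inherited from $\bA$ on the $\Lambda$-block, is trivially zero on the cross blocks, and on the $\Lambda^c$-block the only nonzero entries sit on the diagonal. Coercivity of the bilinear form $a$ implies $\|\bA_\Lambda^{-1}\|_{\ell^2\to\ell^2}\leq 1/\alpha_*$ uniformly in $\Lambda$, hence $\widehat\bA_\Lambda$ is invertible with $\|\widehat\bA_\Lambda^{-1}\|_{\ell^2\to\ell^2}\leq \max(1/\alpha_*,1)$, uniformly in $\Lambda$. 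Invoking Property \ref{prop:inverse.alg}, I then obtain $\widehat\bA_\Lambda^{-1}\in\mathcal{D}_a(\eta_L)$ with a decay constant determined by $c_L$, $\eta_L$, $\alpha_*$, hence uniform in $\Lambda$.

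A further application of Proposition \ref{prop:A-continuity-alg} to $\widehat\bA_\Lambda^{-1}$ shows that $\widehat\bA_\Lambda^{-1}:\mathcal{A}_B^s\to\mathcal{A}_B^s$ is continuous with operator norm independent of $\Lambda$. Combining with $\hat\bu_\Lambda = \widehat\bA_\Lambda^{-1}\bP_\Lambda\bff$ and the elementary bound $\|\bP_\Lambda\bff\|_{\ell_B^s}\leq \|\bff\|_{\ell_B^s}$ (zeroing out entries only lowers the non-increasing rearrangement), one gets $\|\hat\bu_\Lambda\|_{\mathcal{A}_B^s}\lsim\|\bff\|_{\mathcal{A}_B^s}$. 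Finally, $\|\bff\|_{\mathcal{A}_B^s}=\|\bA\bu\|_{\mathcal{A}_B^s}\lsim\|\bu\|_{\mathcal{A}_B^s}$ by Proposition \ref{prop:A-continuity-alg} applied to $\bA$ itself, and the claim follows by the triangle inequality.

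The main obstacle is not any single step but rather the careful tracking of uniformity in $\Lambda$: each invocation of Jaffard's theorem (Property \ref{prop:inverse.alg}) and of the $s^*$-compressibility argument underlying Proposition \ref{prop:A-continuity-alg} produces constants that a priori depend on the matrix at hand. The block-diagonal structure of $\widehat\bA_\Lambda$ together with the uniform coercivity estimate $\|\bA_\Lambda^{-1}\|\leq 1/\alpha_*$ is precisely what guarantees that all these constants may be absorbed into quantities depending only on $\bA$, $s$ and $\eta_L$, and not on $\Lambda$.
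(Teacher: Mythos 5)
Your proposal is correct and follows essentially the same route as the paper's proof: reduce to bounding $\Vert \bu_\Lambda \Vert_{\ell_B^s}$ via the extended formulation $\hat\bu_\Lambda = (\widehat\bA_\Lambda)^{-1}\bP_\Lambda\bff$, use Property \ref{prop:inf-matrix} together with Property \ref{prop:inverse.alg} to get $(\widehat\bA_\Lambda)^{-1}\in\mathcal{D}_a(\eta_L)$, apply Proposition \ref{prop:A-continuity-alg} to both $\bA$ and $(\widehat\bA_\Lambda)^{-1}$, and use $\Vert\bP_\Lambda\bff\Vert_{\ell_B^s}\le\Vert\bff\Vert_{\ell_B^s}$ and $\bff=\bA\bu$. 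Your explicit tracking of the $\Lambda$-uniformity of the constants (via the block structure of $\widehat\bA_\Lambda$ and the coercivity bound $\Vert\bA_\Lambda^{-1}\Vert\le 1/\alpha_*$) is a useful elaboration of what the paper leaves implicit, but it is the same argument.
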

\begin{proof}
Denoting by $ {\bf r}_\Lambda$ the vector representing
$r(u_\Lambda)$ and using Proposition \ref{prop:A-continuity-alg}, we get
\begin{equation}\label{eq:aux-1}
\| {\bf r}_\Lambda\|_{\ell_B^s} = \| \mathbf{A}({\bf u} - {\bf u}_\Lambda )\|_{\ell_B^s} 
\lesssim \| {\bf u} - {\bf u}_\Lambda \|_{\ell_B^s} 
\lesssim \| {\bf u} \|_{\ell_B^s} + \| {\bf u}_\Lambda \|_{\ell_B^s} .
 \end{equation}
At this point, we invoke the equivalent formulation of the Galerkin problem given by \eqref{eq:inf-pb-galerkin},
which yields $ \hat{{\bf u}} = ({\widehat{\mathbf{A}}_\Lambda})^{-1} (\mathbf{P}_\Lambda{\bf f})$. 
Using $\mathbf{A}\in\mathcal{D}_a(\eta_L)$ and combining 
Property \ref{prop:inf-matrix} together with Property \ref{prop:inverse.alg},
we obtain  $(\widehat{\mathbf{A}}_\Lambda)^{-1}\in \mathcal{D}_a(\eta_L)$. 
Hence, applying Proposition \ref{prop:A-continuity-alg} to $(\widehat{\mathbf{A}}_\Lambda)^{-1}$ we get 
\[
 \|{\bf u}_\Lambda\|_{\ell_B^s}  = \|\hat{{\bf u}} \|_{\ell_B^s} =
\|({\widehat{\mathbf{A}}_\Lambda})^{-1} (\mathbf{P}_\Lambda{\bf f})\|_{\ell_B^s} \lesssim \| \mathbf{P}_\Lambda{\bf f} \|_{\ell_B^s} \leq \| {\bf f} \|_{\ell_B^s} \;,
\]
where the last step is an easy consequence of the definition of the projector $ \mathbf{P}_\Lambda$. 
By substituting the above inequality  into \eqref{eq:aux-1}, we finally obtain
\begin{equation}
\| {\bf r}_\Lambda\|_{\ell_B^s} 
\lesssim \| {\bf u} \|_{\ell_B^s} + \| {\bf f} \|_{\ell_B^s} = \| {\bf u} \|_{\ell_B^s} + \| \mathbf{A}{\bf u} \|_{\ell_B^s} \lesssim  \| {\bf u} \|_{\ell_B^s} \;,
 \end{equation}
where in the last inequality we used again Proposition \ref{prop:A-continuity-alg}.
\end{proof}

We observe that the previous bound is tailored {to} the ``worst-scenario'':
one expects indeed that for $\Lambda$ large enough the residual becomes
progressively smaller than the solution.

%-------------------------------------------------------------------------------------
\subsection{Exponential case}\label{subsec:spars-res-exp}
%-------------------------------------------------------------------------------------

{As already alluded to in the Introduction, and in striking
  contrast to} the previous algebraic case,  
the implication $v \in {\mathcal A}^{{\eta},{t}}_G \Rightarrow Lv \in {\mathcal A}^{{\eta},{t}}_G$
is false. The following counter-examples prove this fact, and shed light on which could be the correct implication.

\begin{example}[{Banded matrices}]\label{ex:spars1}{\rm
Fix $d=1$ and $t=1$ (hence, $\tau { = \frac{t}{d}} =1$). Recalling the expression (\ref{eq:four140}) for the entries of $\mathbf{A}$,
let us choose $\hat{\nu}_0=\hat{\sigma}_0= {\sqrt{2 \pi}}$, which gives
$$
{a_{\ell,\ell} = 1 \qquad\forall \; \ell\in\mathbb{Z}.}
%\begin{cases}
%\tfrac12 & \text{\rm if } \ell=0\;, \\
%1 & \text{\rm if } \ell \not= 0\;.
%\end{cases}
$$
Next, let us choose $\hat{\sigma}_h=0$ for all $h \not= 0$, which
implies {(because $d=1$)}
$$
|a_{\ell,k}|=\frac1{\sqrt{2\pi}}\, \frac{|\ell|\, |k|}{c_\ell \, c_k} |\hat{\nu}_{\ell-k}|\;, \qquad \ell \not= k \;,
$$
i.e., 
$$
\frac1{2\sqrt{2\pi}}\, |\hat{\nu}_{\ell-k}| \leq |a_{\ell,k}| \leq \frac1{\sqrt{2\pi}}\,|\hat{\nu}_{\ell-k}| \;, 
\qquad \ell \not= k \;, \ \ |\ell|, |k| \geq 1 \;.
$$
At this point, let us fix a real $\eta_L >0$ and an integer $p \geq 0$,
 and let us choose the coefficients $\hat{\nu}_h$ for $h \not= 0$ to satisfy
$$
|\hat{\nu}_h| =
\begin{cases}
\sqrt{2\pi} {\rm e}^{-\eta_L |h|}& \text{if } 0 < |h| \leq p \;, \\
0 & \text{if } |h| > p\;.
\end{cases}
$$ 
In summary, the coefficient $\nu$ of the elliptic operator $L$ is a trigonometric polynomial of degree $p$, whereas
the coefficient $\sigma$ is a constant. The corresponding stiffness matrix $\mathbf{A}$ is banded with $2p+1$ 
non-zero diagonals, and satisfies
\begin{equation}\label{eq:spars3}
\tfrac12  {\rm e}^{-\eta_L |\ell-k|} \leq |a_{\ell,k}| \leq {\rm e}^{-\eta_L |\ell-k|} \;, 
\qquad 0 \leq |\ell-k|\leq p \;, \ \ |\ell|, |k| \geq 1 \;.
\end{equation}

In order to define the vector $\mathbf{v}$, let us introduce the function $\iota \, : \, \mathbb{N}_* \to \mathbb{N}_*$,
{$\iota(n)=2(p+1)n$}. Let us fix a real $\eta>0$ and let us define the components $(\mathbf{v})_k = \hat{v}_k$
of the vector in such a way that
$$
|(\mathbf{v})_k|=
\begin{cases}
{\rm e}^{-\frac{\eta}2 n}& \text{if } k=\iota(n) \text{ for some } n \geq 1\;, \\
0 & \text{otherwise}\;.
\end{cases}
$$ 
Thus, the rearranged components $(\mathbf{v})_n^*$ satisfy $|(\mathbf{v})_n^*|={\rm e}^{-\frac{\eta}2 n}$, 
$n \geq 1$, {whence}
$\mathbf{v} \in \ell_G^{\eta,1}(\mathbb{Z})$ (or, equivalently, $v \in {\mathcal A}^{\eta,1}_G$), with
$\Vert \mathbf{v} \Vert_{\ell_G^{\eta,1}(\mathbb{Z})}=1$,
{according to Definition \ref{def:elpicGev}}.

The definition of the mapping $\iota$ and the banded structure of $\mathbf{A}$ imply that the only non-zero components
of $\mathbf{A}\mathbf{v}$ are those of indices $\iota(n)+q$ for some $n \geq 1$ and $q \in [-p,p]$. For these components
one has
$$
(\mathbf{A}\mathbf{v})_{\iota(n)+q}= a_{\iota(n)+q,\iota(n)} (\mathbf{v})_{\iota(n)} \;,
$$ 
thus, recalling (\ref{eq:spars3}), we easily obtain
\begin{equation}\label{eq:spars4}
\tfrac12 {\rm e}^{-\eta_L p} {\rm e}^{-\frac{\eta}2 n} \leq |(\mathbf{A}\mathbf{v})_{\iota(n)+q}| \leq
{\rm e}^{-\frac{\eta}2 n}\;, \qquad q \in [-p,p]\;.
\end{equation}
This shows that, for any integer $N \geq 1$,
$$
\# \{ \ell \, : \, |(\mathbf{A}\mathbf{v})_{\ell}| \geq \tfrac12 {\rm e}^{-\eta_L p} {\rm e}^{-\frac{\eta}2 N} \, \}
\geq (2p+1)N \;,
$$
hence
$$
|(\mathbf{A}\mathbf{v})^*_{(2p+1)N}| \, {\rm e}^{\frac{\eta}2 (2p+1)N} \geq \tfrac12 {\rm e}^{-\eta_L p}
{\rm e}^{\eta pN} \to +\infty \qquad \text{as } N \to +\infty \;,
$$
i.e., $\mathbf{A}\mathbf{v} \not \in \ell_G^{\eta,1}(\mathbb{Z})$ (or,
equivalently, $Lv \not\in {\mathcal A}^{\eta,1}_G$) {regardless
  of the relative values of $\eta_L$ and $\eta$.}

On the other hand, let ${m}_p$ be the smallest integer such that $\frac12 {\rm e}^{-\eta_L p} > 
{\rm e}^{-\frac{\eta}2 {m}_p}$. Given any $m \geq 1$, let $N \geq 1$ and $Q \in [-p,p]$ be such that
$(\mathbf{A}\mathbf{v})^*_m = (\mathbf{A}\mathbf{v})_{\iota(N)+Q}$, 
{which combined with (\ref{eq:spars4}) yields}
$$
{\rm e}^{-\frac{\eta}2 (N+{m}_p)} < |(\mathbf{A}\mathbf{v})^*_m| \leq {\rm e}^{-\frac{\eta}2 N} \;.
$$  
The {rightmost inequality in (\ref{eq:spars4}), 
namely $|(\bA\bv)_{\iota (N+m_p) +q}| \le e^{-\frac{\eta}{2}(N+m_p)}$},
 shows that there are at most $(2p+1)(N+{m}_p)$ components
of $\mathbf{A}\mathbf{v}$ that are larger than ${\rm e}^{-\frac{\eta}2
  (N+m_p)}$ in modulus. {This implies $m \leq  (2p+1)(N+{m}_p)$, whence}
$$
{\rm e}^{-\frac{\eta}2 N} \leq {\rm e}^{\frac{\eta}2 {m}_p} {\rm e}^{-\frac{\eta}{2(2p+1)} m} \;.
$$
Setting $\bar{\eta}=\frac{\eta}{2p+1}$, we conclude that $\mathbf{A}\mathbf{v} \in \ell_G^{\bar{\eta},1}(\mathbb{Z})$ 
(or, equivalently, $Lv \in {\mathcal A}^{\bar{\eta},1}_G$), with 
$$
\Vert \mathbf{A}\mathbf{v} \Vert_{\ell_G^{\bar{\eta},1}(\mathbb{Z})} \leq 
{\rm e}^{\frac{\eta}2 {m}_p} \Vert \mathbf{v} \Vert_{\ell_G^{\eta,1}(\mathbb{Z})} \;.
$$
{Therefore,
the sparsity class of $\bA\bv$ deteriorates  from $\ell^{\eta,1}_G(\mathbb{Z})$
for $\bv$ to $\ell^{\bar\eta,1}_G(\mathbb{Z})$ with
$\bar\eta=\frac{\eta}{2p+1}$.}
\endproof
}
\end{example}

Next counter-example shows that, when the stiffness matrix $\bA$ is not
banded, in order to have 
{$\bA\bv \in \ell^{\bar{\eta},\bar{t}}_G(\mathbb{Z})$} it is not enough to choose some
$\bar{\eta} < \eta$ as above, but a choice of $\bar{t} < t$ is mandatory.

\begin{example}[{Dense matrices}]\label{ex:spars2}{\rm
Let us {take again $d=t=1$ and}
modify the setting of the previous example, by assuming now that the coefficients $\hat{\nu}_h$
satisfy 
$$
|\hat{\nu}_h| = \sqrt{2\pi} {\rm e}^{-\eta_L |h|} \qquad \text{for all } |h|>0 \;,
$$
so that $\mathbf{A}$ is no longer banded, and its elements satisfy
\begin{equation}\label{eq:spars30}
\tfrac12  {\rm e}^{-\eta_L |\ell-k|} \leq |a_{\ell,k}| \leq {\rm e}^{-\eta_L |\ell-k|} 
\qquad \text{for all } |\ell|, |k| \geq 1 \;.
\end{equation}

{If $M>0$ is an arbitrary integer, we now construct 
a vector $\bv^M = \sum_{n \geq 1} \mathbf{v}^{M,n}$ with gaps of size $\lambda(M)\ge M$ 
between consecutive non-vanishing entries. To this end, we
introduce the function $\iota_M \, : \, \mathbb{N}_* \to \mathbb{N}_*$ defined as
$\iota_M(n):=\lambda(M)n$ and the vectors $\mathbf{v}^{M,n}$ with components}
$$
|(\mathbf{v}^{M,n})_k|={\rm e}^{-\frac{\eta}2 n} \delta_{k,\iota_M(n)} \;, \qquad k \in \mathbb{Z} \;.
$$
{From (\ref{eq:spars30}) and the fact that only the
  $\iota_M(n)$-th entry of $\bv^{M,n}$ does not vanish, we obtain
\begin{equation}\label{eq:spars31}
\tfrac12 {\rm e}^{-\eta_L |\ell - \iota_M(n)|} {\rm e}^{-\frac{\eta}2 n} 
\leq |(\mathbf{A}\mathbf{v}^{M,n})_\ell| \leq
{\rm e}^{-\eta_L |\ell - \iota_M(n)|} {\rm e}^{-\frac{\eta}2 n} \;.
\end{equation}
As in Example \ref{ex:spars1}, it is obvious that 
$\mathbf{v}^M \in \ell_G^{\eta,1}(\mathbb{Z})$ with 
$\Vert \mathbf{v}^M \Vert_{\ell_G^{\eta,1}(\mathbb{Z})}=1$. However, we will
prove below that 
$\|\bA\bv^M\|_{\ell^{\bar\eta,\bar t}_G} \lsim \|\bv^M\|_{\ell^{\eta,1}_G}$ 
cannot hold uniformly in $M$ for any
$\bar\eta>0$ and $\bar t>1/2$. 

We start by examining the cardinality $\#\mathcal{F}_n$ of the set
\[
\mathcal{F}_n:=\{ \ell\in\mathbb{Z} \, : \, |(\mathbf{A}\mathbf{v}^{M,n})_{\ell}|
> {\rm e}^{-\frac{\eta}2 M} \, \}
\]
In view of \eqref{eq:spars31}, the condition 
$|(\mathbf{A}\mathbf{v}^{M,n})_{\ell}| >  {\rm e}^{-\frac{\eta}2 M}$ 
is satisfied by those
$\ell=\iota_M(n)+m$ such that
\begin{equation*}
0 \leq |m| \leq \frac{\eta}{2\eta_L}(M-n) \; ,
\end{equation*}
whence $n\le M$ and $\#\mathcal{F}_n\ge
\frac{\eta}{\eta_L}(M-n)+1$. We now claim that
\begin{equation}\label{eq:C_M}
C_M := \# \{ \ell \, : \, |(\mathbf{A}\mathbf{v}^M)_{\ell}| 
\geq {\rm e}^{-\frac{\eta}2 M} \, \} \geq
\sum_{n = 1}^M
\# \mathcal{F}_n  \; ,
\end{equation}
whose proof we postpone. Assuming \eqref{eq:C_M} we see that
\[
C_M \geq \sum_{n=1}^{M}\left(\frac{\eta}{\eta_L}(M-n)+1
\right) \sim \frac{\eta}{2\eta_L}M^2 \; ,
\]
or equivalently there are about 
$N_M=\left\lceil\frac{\eta}{2\eta_L}M^2\right\rceil$ coefficients
of $\bv^M$ with values at least ${\rm e}^{-\frac{\eta}2 M}$. This
implies that the $N_M$-th rearranged coefficient of 
$\mathbf{A}\mathbf{v}^M$ satisfies
\[
 |(\mathbf{A}\mathbf{v}^{M})^*_{N_M}| \ge {\rm e}^{-\frac{\eta}{2}M}
\ge {\rm e}^{-\frac12 (2\eta_L\eta)^{1/2}N_M^{1/2}} \qquad
\text{for all } M \geq 1 \;.
\]
This proves that for any $\bar{\eta}>0$ and $\bar t> \frac12$, one has
$$
\Vert \mathbf{A}\mathbf{v}^M \Vert_{\ell_G^{\bar{\eta},\bar t}(\mathbb{Z})} \geq 
|(\mathbf{A}\mathbf{v}^{M})^*_{N_M}|\, {\rm e}^{\frac{\bar{\eta}}2
  N_M^{\bar t}} \geq
{\rm e}^{\frac{\bar{\eta}}2 N_M^{\bar t} - \frac12 (2\eta_L\eta)^{1/2}N_M^{1/2}} \to +\infty \qquad
\text{as  } M \to \infty \; ,
$$
whence the following bound cannot be valid}
$$
\Vert \mathbf{A}\mathbf{v} \Vert_{\ell_G^{\bar{\eta},t}(\mathbb{Z})} \lsim 
\Vert \mathbf{v} \Vert_{\ell_G^{\eta,1}(\mathbb{Z})} \;, 
\qquad \text{for all } \mathbf{v} \in \ell_G^{\eta,1}(\mathbb{Z}) \;.
$$

{It remains to prove \eqref{eq:C_M}. We first note that the sets
$\mathcal{F}_n$ are disjoint provided $\iota_M(n+1)-\iota_M(n) =
\lambda(M) \ge \frac{\eta}{\eta_L}M$. We next set
\[
\varepsilon_M:=\min_{1\le n \le M}\min_{\ell\in\mathcal{F}_n}   
|(\mathbf{A}\mathbf{v}^{M,n})_{\ell}| -{\rm e}^{-\frac{\eta}2 M} > 0
\]
which is a constant only dependent on $M$. We}
observe that for every $\ell\in\mathcal{F}_n$, there holds
\begin{eqnarray}
|(\mathbf{A}\mathbf{v}^M)_{\ell}| &\geq& |(\mathbf{A}\mathbf{v}^{M,n})_{\ell}| - 
| \sum_{p\not= n} (\mathbf{A}\mathbf{v}^{M,p})_{\ell} |\geq{\rm
  e}^{-\frac{\eta}2 M} + \varepsilon_M - \sum_{p\not= n} | (\mathbf{A}\mathbf{v}^{M,p})_{\ell} |. \label{example:aux1}
\end{eqnarray}
{We write $\ell\in\mathcal{F}_n$ as $\ell=\iota_M(n)+m$,
make use of \eqref{eq:spars31} and the definition of
$\iota_M(n)=\lambda(M)n$ to deduce
\[
\sum_{p\not= n} |(\mathbf{A}\mathbf{v}^{M,p})_{\ell} |\leq  
\sum_{p\not= n} e^{-\eta_L |\ell -\iota_M(p)|}e^{-\frac{\eta}{2}p}
\leq \sum_{p\not= n} e^{-\eta_L |m+\lambda(M)(n-p)|}\leq 
 \sum_{p\not= n} e^{-\eta_L (\lambda(M)|n-p| - |m|)}.
\]
Since $|m|\le \frac{\eta}{2\eta_L}M$, the above inequality gives}
\begin{equation}\label{example:aux2}
\sum_{p\not= n} |(\mathbf{A}\mathbf{v}^{M,p})_{\ell} |\leq 
2 e^{\eta_L |m|} \sum_{q\geq 1} e^{-\eta_L \lambda(M)q} \leq 
2e^{\frac{\eta}{2}M} \sum_{q\geq 1} e^{-\eta_L \lambda(M) q}\;. 
\end{equation}
Combining \eqref{example:aux1} and \eqref{example:aux2} yields 
$$
|(\mathbf{A}\mathbf{v}^M)_{\ell}|\geq {\rm e}^{-\frac{\eta}2 M} + \varepsilon_M - 
2e^{\frac{\eta}{2}M} \sum_{q\geq 1} e^{-\eta_L \lambda(M) q}\;.
$$
By choosing $\lambda(M)$ sufficiently large, the last term on the
right-hand side of the above inequality can be made arbitrarily small,
in particular {$\le \varepsilon_M$. We thus get 
$|(\mathbf{A}\mathbf{v}^M)_{\ell}|\geq {\rm e}^{-\frac{\eta}2 M}$ and
prove \eqref{eq:C_M}}.
\endproof
}
\end{example}

{Guided by Examples \ref{ex:spars1} and \ref{ex:spars2}}, 
we are ready to state the main
result of this section. {We define
\begin{equation}\label{aux-funct}
\zeta(t) := \Big( \frac{1+t}{\omega_d^{1+t}} \Big)^{\frac{t}{d(1+t)}}
\qquad\forall\; 0<t\le d.
\end{equation}
}

\begin{proposition}[{continuity of $L$ in $\mathcal{A}^{\eta,t}_G$}]\label{propos:spars-res}
Let the differential operator $L$ be such that the corresponding stiffness matrix satisfies 
$\mathbf{A} \in {\mathcal D}_e(\eta_L)$ for some constant $\eta_L>0$.
Assume that $v \in {\mathcal A}^{\eta,t}_G$ for some $\eta>0$ and $t \in (0,d]$. Let one of the two following
set of conditions be satisfied.
\begin{enumerate}[\rm (a)]
\item
{If the} matrix $\mathbf{A}$ is banded with $2p+1$ non-zero
diagonals, {let us set}
$$
\bar{\eta}= \frac{\eta}{(2p+1)^\tau} \;, \qquad \bar{t}= t \;.
$$
\item
{If the matrix $\mathbf{A}$ is dense}, but the coefficients $\eta_L$ and $\eta$ satisfy
the inequality $\eta< \eta_L  \omega_d^{\tau}$, {let us set} 
$$
\bar{\eta}= \zeta(t)\eta \;, \qquad \bar{t}= \frac{t}{1+t} \;.
$$
\end{enumerate}
Then, one has $Lv \in {\mathcal A}^{\bar{\eta},\bar{t}}_G$, with
\begin{equation}\label{eq:spars11bis}
\Vert Lv \Vert_{{\mathcal A}_G^{\bar{\eta},\bar{t}}} \lsim 
\Vert v \Vert_{{\mathcal A}_G^{\eta,t}} \;.
\end{equation}
\end{proposition}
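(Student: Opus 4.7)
The plan is to pass to the sequence-space reformulation given by Proposition \ref{prop:nlg1}, so the task becomes: if $\mathbf{v}\in\ell_G^{\eta,t}(\mathbb{Z}^d)$ then $\mathbf{A}\mathbf{v}\in\ell_G^{\bar\eta,\bar t}(\mathbb{Z}^d)$ with $\|\mathbf{A}\mathbf{v}\|_{\ell_G^{\bar\eta,\bar t}}\lsim\|\mathbf{v}\|_{\ell_G^{\eta,t}}$. For each target cardinality $M$ I would build an explicit $M$-term approximant of $\mathbf{A}\mathbf{v}$ and estimate the resulting best-$M$-term error from above.

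For integers $N,J\geq 1$, let $\mathbf{v}_N$ be a best $N$-term approximation of $\mathbf{v}$ and $\mathbf{A}_J$ be the symmetric truncation of $\mathbf{A}$ of bandwidth $J$ introduced in \eqref{eq:trunc-matr}. The proposed approximant is $\mathbf{A}_J\mathbf{v}_N$: its support is contained in the union of the $\omega_d J^d$-balls centred at the $N$ indices of $\mathbf{v}_N$, so its cardinality is $\lsim \omega_d J^d N$ by Property \ref{prop:matrix-estimate}. Writing
$\mathbf{A}\mathbf{v}-\mathbf{A}_J\mathbf{v}_N=\mathbf{A}(\mathbf{v}-\mathbf{v}_N)+(\mathbf{A}-\mathbf{A}_J)\mathbf{v}_N$
and combining Property \ref{prop:bounded} (for $\|\mathbf{A}\|\lsim 1$), the Gevrey bound $E_N(\mathbf{v})\lsim\exp(-\eta\omega_d^{-t/d}N^{t/d})\|v\|_{\mathcal{A}_G^{\eta,t}}$, and the truncation estimate $\|\mathbf{A}-\mathbf{A}_J\|\lsim (J+1)^{d-1}e^{-\eta_L J}$, I would obtain
\[
\|\mathbf{A}\mathbf{v}-\mathbf{A}_J\mathbf{v}_N\|\lsim \exp(-\eta\omega_d^{-\tau}N^\tau)\|v\|_{\mathcal{A}_G^{\eta,t}} + (J+1)^{d-1}e^{-\eta_L J}\|\mathbf{v}\|,
\]
with $\tau=t/d$.

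In case (a), $\mathbf{A}=\mathbf{A}_p$, so the truncation term is absent and the support of $\mathbf{A}\mathbf{v}_N$ has cardinality at most $(2p+1)N$. Setting $M=(2p+1)N$ and substituting $N=M/(2p+1)$ in the Gevrey bound immediately yields $E_M(Lv)\lsim\exp\bigl(-(\eta/(2p+1)^\tau)\omega_d^{-\tau}M^\tau\bigr)\|v\|_{\mathcal{A}_G^{\eta,t}}$, i.e.\ $\bar t=t$ and $\bar\eta=\eta/(2p+1)^\tau$; this matches the deterioration exhibited in Example \ref{ex:spars1}. In case (b), I would pick $J$ proportional to $N^{t/d}$. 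Under the hypothesis $\eta<\eta_L\omega_d^\tau$ the exponents of $N^{t/d}$ produced by the two error terms are of the same order, and the proportionality constant can be chosen so that the truncation contribution is dominated by the approximation one (the polynomial factor $(J+1)^{d-1}$ being absorbed via an arbitrarily small loss in the rate). The total cardinality then behaves as $M\sim\omega_d J^d N\sim N^{1+t}$, so $N\sim M^{1/(1+t)}$, and the approximation exponent becomes $\eta\omega_d^{-\tau}N^\tau\sim\bar\eta\omega_d^{-\bar\tau}M^{\bar\tau}$ with $\bar\tau=t/(d(1+t))$, i.e.\ $\bar t=t/(1+t)$; a careful accounting of the constants produces the claimed value $\bar\eta=\zeta(t)\eta$.

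The delicate point is case (b): one must simultaneously respect the cardinality constraint $\omega_d J^d N\lsim M$, dominate the truncation error by the approximation error (which is precisely the role of the hypothesis $\eta<\eta_L\omega_d^\tau$), and swallow the polynomial factor $(J+1)^{d-1}$ into an infinitesimal decrease of the exponential rate while extracting the precise constant $\zeta(t)$ in $\bar\eta$. The unavoidable degradation of the exponent from $t$ to $t/(1+t)$ -- which does not occur in the algebraic setting of Proposition \ref{prop:A-continuity-alg} -- is already signalled by the dense-matrix construction of Example \ref{ex:spars2}; together, the two cases confirm that $\mathbf{A}\mathbf{v}$ generically lies in a strictly larger Gevrey class than $\mathbf{v}$ itself.
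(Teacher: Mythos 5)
Your reduction to sequences and the single-level approximant $\mathbf{A}_J\mathbf{v}_N$ are sound as far as they go, and they do settle case (a): there the truncation error is absent, the support count $(2p+1)N$ is correct, and rescaling $M=(2p+1)N$ gives exactly $\bar\eta=\eta/(2p+1)^\tau$, $\bar t=t$ — in fact more directly than the paper's own treatment of the banded case. The gap is in case (b), precisely at the step you defer to ``a careful accounting of the constants'': with a \emph{uniform} bandwidth $J\sim cN^{\tau}$ that accounting does not produce $\bar\eta=\zeta(t)\eta$ under the stated hypothesis. Indeed, from $M\approx\omega_d c^dN^{1+t}$ and the approximation exponent $\eta\omega_d^{-\tau}N^{\tau}$ one gets at best $\bar\eta=\eta\,\omega_d^{-\tau}c^{-t/(1+t)}$, so reaching $\zeta(t)\eta=(1+t)^{t/(d(1+t))}\omega_d^{-\tau}\eta$ forces $c\le(1+t)^{-1/d}$; on the other hand, dominating the truncation term $(J+1)^{d-1}e^{-\eta_L J}$ by the approximation term forces (up to the harmless polynomial factor) $\eta_L c\ge\eta\omega_d^{-\tau}$, i.e. $c\ge\eta/(\eta_L\omega_d^{\tau})$. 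These two constraints are compatible only if $\eta\le(1+t)^{-1/d}\eta_L\omega_d^{\tau}$, which is strictly stronger than the hypothesis $\eta<\eta_L\omega_d^{\tau}$; in the remaining range your construction yields only $\bar\eta=\eta\omega_d^{-\tau}\bigl(\eta_L\omega_d^{\tau}/\eta\bigr)^{t/(1+t)}<\zeta(t)\eta$. This is not a cosmetic loss: the exact value $\zeta(t)\eta$ and the exact admissibility threshold $\eta<\eta_L\omega_d^{\tau}$ are what Proposition \ref{prop:unif-bound-res-exp} chains together three times, so a weaker constant or a strengthened hypothesis would break that argument.

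The missing idea is a \emph{level-dependent} truncation, which is how the paper proceeds (following \cite{CDDV:1998}): it approximates $Lv$ by $w_J=\sum_{j=1}^J L_{\chi(J-j)}(v_j-v_{j-1})$ with $\chi(i)=\lceil i^{\tau}\rceil$, so each retained mode is hit by a bandwidth that shrinks as the size of its coefficient shrinks. The error is then summed using the subadditivity $(J-j)^{\tau}+j^{\tau}\ge J^{\tau}$ together with $\beta=\eta_L-\eta\omega_d^{-\tau}>0$, which keeps the full rate $e^{-\eta\omega_d^{-\tau}J^{\tau}}$ for \emph{every} $\eta<\eta_L\omega_d^{\tau}$, while the cardinality is $\omega_d\sum_{j=0}^{J-1}\lceil j^{\tau}\rceil^{d}\sim\frac{\omega_d}{1+t}J^{1+t}$. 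It is exactly this $\tfrac1{1+t}$ saving — unattainable with a single bandwidth applied to all $N$ modes — that converts the exponent into $\zeta(t)\eta\,\omega_d^{-\bar\tau}M^{\bar\tau}$. So either adopt the layered decomposition, or be explicit that your construction proves the proposition only with a smaller $\bar\eta$ (or under the stronger restriction $\eta\le(1+t)^{-1/d}\eta_L\omega_d^{\tau}$), which would then require revisiting the constants in Proposition \ref{prop:unif-bound-res-exp}.
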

\proof
We adapt to our situation the technique introduced in \cite{CDDV:1998}. Let $L_J$ ($J \geq 0$) be the differential
operator obtained by truncating the Fourier expansion of the coefficients of $L$ to the modes $k$ satisfying
$|k|\leq J$. Equivalently, $L_J$ is the operator whose stiffness matrix $\mathbf{A}_J$ is defined in 
(\ref{eq:trunc-matr}); thus, by Property \ref{prop:matrix-estimate} 
{(exponential case)} we have
$$
\Vert L-L_J \Vert = \Vert \mathbf{A}-\mathbf{A}_J \Vert \leq
C_{\mathbf{A}} {(J+1)}^{d-1}{\rm e}^{-\eta_L J} \;. 
$$
On the other hand, for any $j \geq 1$, let $v_j=P_j(v)$ be a best $j$-term approximation of $v$ (with $v_{0}=0$), 
which therefore satisfies $\Vert v-v_j \Vert \leq {\rm e}^{-\eta \omega_d^{-\tau} j^\tau} 
\Vert v \Vert_{{\mathcal A}^{{\eta},{t}}_G}$, with $\tau=t/d$. Note that the difference $v_j -v_{j-1}$ 
consists of a single {Fourier} mode and satisfies as well
$$
\Vert v_j-v_{j-1} \Vert \lsim {\rm e}^{-\eta \omega_d^{-\tau} j^\tau} 
\Vert v \Vert_{{\mathcal A}^{{\eta},{t}}_G} \;. 
$$
Finally, let us introduce the function $\chi \, : \, \mathbb{N} \to \mathbb{N}$ defined 
as $\chi(j)=\lceil j^\tau \rceil$, the smallest integer larger than or equal to $j^\tau$.

For any $J \geq 1$, let $w_J$ be the approximation of $Lv$ defined as
$$
w_J = \sum_{j=1}^J L_{\chi(J-j)}(v_j-v_{j-1}) \;.
$$
Writing $v=v-v_J + \sum_{j=1}^J (v_j-v_{j-1})$, we obtain 
$$
Lv-w_J = L(v-v_J)
+ \sum_{j=1}^J (L-L_{\chi(J-j)})(v_j-v_{j-1})\;.
$$ 
{We now assume to be in Case (b). Since
  $L:\ell^2(\mathbb{Z}^d)\to\ell^2(\mathbb{Z}^d)$ is continuous}, the last equation yields
\begin{equation}\label{eq:spars11}
\Vert Lv-w_J \Vert \lsim \left( {\rm e}^{-\eta \omega_d^{-\tau} J^\tau} 
+ \sum_{j=1}^J {\big(\lceil (J-j)^\tau \rceil + 1\big)^{d-1}}
 {\rm e}^{-(\eta_L \lceil (J-j)^\tau \rceil +\eta \omega_d^{-\tau} j^\tau )}
\right)\Vert v \Vert_{{\mathcal A}^{{\eta},{t}}_G} \;.
\end{equation}
The exponents of the addends can be bounded from below as follows
{because $\tau\le 1$}
\begin{eqnarray*}
\eta_L \lceil (J-j)^\tau \rceil +\eta \omega_d^{-\tau} j^\tau &=&
\eta_L \lceil (J-j)^\tau \rceil - \eta \omega_d^{-\tau} (J-j)^\tau + \eta \omega_d^{-\tau}( (J-j)^\tau + j^\tau) \\
&\geq& \eta_L (J-j)^\tau  - \eta \omega_d^{-\tau} (J-j)^\tau + \eta \omega_d^{-\tau}((J-j) + j)^\tau \\
&=& \beta (J-j)^\tau + \eta \omega_d^{-\tau} J^\tau \;, 
\end{eqnarray*}
with $\beta= \eta_L -\eta \omega_d^{-\tau} >0$ by assumption. Then,
(\ref{eq:spars11}) yields
\begin{equation}\label{eq:spars12}
\Vert Lv-w_J \Vert \lsim \left( 1 + \sum_{j=0}^{J-1} 
{\big(\lceil j^\tau\rceil+1\big)^{d-1}} {\rm e}^{-\beta j^\tau} \right) 
{\rm e}^{-\eta \omega_d^{-\tau} J^\tau} \Vert v \Vert_{{\mathcal A}^{{\eta},{t}}_G} \lsim
{\rm e}^{-\eta \omega_d^{-\tau} J^\tau} \Vert v \Vert_{{\mathcal A}^{{\eta},{t}}_G} \;.
\end{equation}

On the other hand, by construction $w_J$ belongs to a finite
dimensional space $V_{\Lambda_J}$, where
{
\begin{equation}\label{eq:spars13}
|\Lambda_J| \leq \omega_d\sum_{j=1}^J \chi(J-j)^d = 
\omega_d \sum_{j=0}^{J-1} \lceil j^\tau \rceil^d 
\sim \frac{\omega_d}{1+t} J^{1+t} \qquad \text{as } J \to \infty \;. 
\end{equation}
This implies
$$
\Vert Lv-w_J \Vert \lsim {\rm e}^{-\bar{\eta} \omega_d^{-\bar\tau} |\Lambda_J|^{\bar\tau}}
\Vert v \Vert_{{\mathcal A}^{{\eta},{t}}_G} \;,
$$
with $\bar\tau = \frac{\tau}{1+d\tau} = \frac{t}{d(1+t)}$ and 
$\bar\eta = \left(\frac{1+d\tau}{\omega_d^{1+d\tau}}\right)^{\bar\tau}\eta=\zeta(t)\eta$
as asserted.} 

{We last consider Case (a)}. One has $L_{\chi(J-j)}=L$ if
$\chi(J-j) \geq p$, {whence if 
$j \leq J-p^{1/\tau}$, then the} summation in (\ref{eq:spars11}) can be limited to those $j$
satisfying $j_p \leq j \leq J$, where $j_p= \lceil J- p^{1/\tau} \rceil$. Therefore 
$$
\Vert Lv-w_J \Vert \lsim \left( {\rm e}^{-\eta \omega_d^{-\tau} J^\tau} 
+ \max_{j_p\leq j \leq J} \lceil {(J-j)}^\tau\rceil^{d-1} 
\sum_{j=j_p}^J {\rm e}^{- \eta \omega_d^{-\tau} j^\tau }
\right)\Vert v \Vert_{{\mathcal A}^{{\eta},{t}}_G} \;.
$$
Now, $J-j \leq p^{1/\tau}$ if $j_p\leq j \leq J$ and $j^\tau \geq j_p^\tau \geq (J-p^{1/\tau})^\tau \geq J^\tau - p$, whence
$$
\Vert Lv-w_J \Vert \lsim \left(1+ p^{d-1+1/\tau}{\rm e}^{\eta \omega_d^{-\tau} p}
\right) {\rm e}^{-\eta \omega_d^{-\tau} J^\tau} \Vert v \Vert_{{\mathcal A}^{{\eta},{t}}_G} \;.
$$
We conclude by observing that $|\Lambda_J|\leq (2p+1)J$, since any matrix $\mathbf{A}_J$ has 
at most $2p+1$ diagonals.  \endproof

\bigskip

Finally, we discuss the sparsity class of the residual
$r=r(u_\Lambda)$ for any Galerkin solution $u_\Lambda$.
\begin{proposition}[{sparsity class of the residual}]\label{prop:unif-bound-res-exp}
{Let $\mathbf{A}\in\mathcal{D}_e(\eta_L)$ and
$\bA^{-1} \in\mathcal{D}_e(\bar\eta_L)$, for constants $\eta_L>0$
and $\bar\eta_L\in(0,\eta_L]$ according to Property
\ref{prop:inverse.matrix-estimate}{, and let}
$1\leq d \leq 10$.
If $u \in {\mathcal A}^{\eta,t}_G$ for some $\eta>0$ and $t \in
(0,d]$, such that {$\eta < \omega_d^{t/d(1+2t)}{\bar\eta_L}$}, then
there exist suitable positive constants $\bar{\eta} \leq \eta$ and 
$\bar{t} \leq t$ such that
$r(u_\Lambda) \in {\mathcal A}_G^{\bar{\eta},\bar{t}}$ for any index
set $\Lambda$, with
}
$$
\Vert r(u_\Lambda) \Vert_{{\mathcal A}_G^{\bar{\eta},\bar{t}}} \lsim
\Vert u \Vert_{{\mathcal A}^{\eta,t}_G} \;.
$$
\end{proposition}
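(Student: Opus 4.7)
My plan is to imitate the proof of Proposition \ref{prop:unif-bound-res-alg}, substituting Proposition \ref{propos:spars-res} (Case (b)) wherever the algebraic proof invokes continuity of $\mathbf{A}$ on a sparsity class. Writing $r(u_\Lambda)=f-Lu_\Lambda$ as $\br(u_\Lambda)=\mathbf{A}(\bu-\bu_\Lambda)$ and using the equivalent formulation \eqref{eq:inf-pb-galerkin}, namely $\bu_\Lambda = \widehat{\mathbf{A}}_\Lambda^{-1}\mathbf{P}_\Lambda \mathbf{A}\bu$, the estimate is tracked through the composition
\[
\bu \ \longmapsto\ \mathbf{A}\bu \ \longmapsto\ \mathbf{P}_\Lambda\mathbf{A}\bu
\ \longmapsto\ \bu_\Lambda \ \longmapsto\ \bu-\bu_\Lambda \ \longmapsto\ \mathbf{A}(\bu-\bu_\Lambda)=\br(u_\Lambda),
\]
in which each non-trivial step is handled by a single application of Proposition \ref{propos:spars-res}(b).

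The first two steps use Proposition \ref{propos:spars-res}(b) applied to $\mathbf{A}\in\mathcal{D}_e(\eta_L)$: from $\bu\in\mathcal{A}_G^{\eta,t}$ one gets $\mathbf{A}\bu\in\mathcal{A}_G^{\eta_1,t_1}$ with $\eta_1=\zeta(t)\eta$ and $t_1=t/(1+t)$, provided $\eta<\eta_L\omega_d^{t/d}$; and $\mathbf{P}_\Lambda$ preserves the class, since zeroing coefficients cannot worsen the non-increasing rearrangement. For the third step one invokes Property \ref{prop:inf-matrix} together with Property \ref{prop:inverse.matrix-estimate} to guarantee $\widehat{\mathbf{A}}_\Lambda^{-1}\in\mathcal{D}_e(\bar\eta_L)$, and then applies Proposition \ref{propos:spars-res}(b) a second time to transport $\mathbf{P}_\Lambda\mathbf{A}\bu$ into $\mathcal{A}_G^{\eta_2,t_2}$ with $t_2=t_1/(1+t_1)=t/(1+2t)$ and $\eta_2=\zeta(t_1)\eta_1$. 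This is precisely the stage that carries the second threshold, and the simultaneous requirement $\eta<\eta_L\omega_d^{t/d}$ and $\eta_1<\bar\eta_L\omega_d^{t_1/d}$ translates (after substituting $\eta_1=\zeta(t)\eta$) into the single composite constraint $\eta<\omega_d^{t/d(1+2t)}\bar\eta_L$ assumed in the statement.

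With $\bu_\Lambda\in\mathcal{A}_G^{\eta_2,t_2}$ in hand, we place $\bu$ in the same class via the elementary embedding $\mathcal{A}_G^{\eta,t}\hookrightarrow\mathcal{A}_G^{\eta',t_2}$ valid for every $t_2<t$ and a suitable $\eta'>0$ (the map $N\mapsto \exp(-\eta\omega_d^{-t/d}N^{t/d}+\eta'\omega_d^{-t_2/d}N^{t_2/d})$ is bounded on $\mathbb{N}$ because the larger-$t$ exponent eventually dominates). An application of Lemma \ref{L:nlg2} then yields $\bu-\bu_\Lambda\in\mathcal{A}_G^{\eta_3,t_2}$ with an explicit $\eta_3$ controlled by $\|u\|_{\mathcal{A}_G^{\eta,t}}$. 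A final invocation of Proposition \ref{propos:spars-res}(b) on $\mathbf{A}(\bu-\bu_\Lambda)$ delivers $\br(u_\Lambda)\in\mathcal{A}_G^{\bar\eta,\bar t}$ with $\bar t = t_2/(1+t_2)=t/(1+3t)\le t$ and $\bar\eta=\zeta(t_2)\eta_3\le\eta$, together with the asserted norm bound.

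The principal obstacle is the cumulative bookkeeping of exponential parameters: each invocation of Proposition \ref{propos:spars-res}(b) simultaneously contracts $t$ via $t\mapsto t/(1+t)$, rescales $\eta$ by $\zeta(t)$, and imposes a fresh threshold on the current rate relative to the current matrix decay. Composing the three applications needed for $\mathbf{A}$, $\widehat{\mathbf{A}}_\Lambda^{-1}$, and $\mathbf{A}$ again produces the composite hypothesis of the statement; the dimensional restriction $1\le d\le 10$ is a technical safeguard keeping $\omega_d$ in a range where the constants $\zeta(t)$, the embedding constants, and the triangle constant from Lemma \ref{L:nlg2} stay uniformly under control, so that the chain of thresholds does not degenerate.
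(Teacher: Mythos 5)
Your proposal is correct and follows essentially the same route as the paper: the same equivalent formulation $\bu_\Lambda=(\widehat{\mathbf{A}}_\Lambda)^{-1}\mathbf{P}_\Lambda\bA\bu$, three applications of Proposition \ref{propos:spars-res} (to $\bA$, to $(\widehat{\mathbf{A}}_\Lambda)^{-1}$, and to $\bA$ again) glued by Lemma \ref{L:nlg2} and the embedding of $\mathcal{A}_G^{\eta,t}$ into the weaker class, yielding the same $\bar t=t/(1+3t)$ and a $\bar\eta$ of the same composite $\zeta$-product form, with the single hypothesis $\eta<\omega_d^{t/d(1+2t)}\bar\eta_L$ covering all three thresholds because $\omega_d\ge2$ (i.e.\ $d\le10$) forces $\zeta(t)<1$ and hence the rates to decrease along the chain. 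The only difference is presentational: the paper tracks the parameters backward from the target class, while you compose them forward.
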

\begin{proof}
We first remark that the hypothesis $1\leq d \leq 10$ guarantees $\omega_d\geq2$ 
(see e.g. \cite[Corollary 2.55]{Folland:real-analysis}); this implies $r < \omega_d^r$ for any $r>0$, 
{whence} the function $\zeta$ introduced in (\ref{aux-funct}) satisfies $\zeta(t)<1$ for any $t>0$.
Assume for the moment we are given $\bar{\eta}$ and $\bar{t}$. 
By using Proposition \ref{propos:spars-res} and Lemma \ref{L:nlg2}, we get
\begin{equation}\label{eq:aux-2}
\| {\bf r}_\Lambda\|_{\ell_G^{\bar{\eta},\bar{t}}} = \| \mathbf{A}({\bf u} - {\bf u}_\Lambda )\|_{\ell_G^{\bar{\eta},\bar{t}}} 
\lesssim \| {\bf u} - {\bf u}_\Lambda \|_{\ell_G^{{\eta_1},{t_1}}} 
\lesssim {\| {\bf u} \|_{\ell_G^{2^{\tau_1}{\eta_1},{t_1}}} + 
\| {\bf u}_\Lambda \|_{\ell_G^{2^{\tau_1}{\eta_1},{t_1}}}},
 \end{equation}
 where, $\bar{\tau}=\bar{t}/d,\tau_1=t_1/d$ and
 the following relations hold
 \[
 \bar\eta=\zeta(t_1)\eta_1 , \qquad \bar t =\frac{t_1}{1+t_1}<t_1 \ .
 \]
 From \eqref{eq:inf-pb-galerkin} we have $ {\bf u}_\Lambda = ({\widehat{\mathbf{A}}_\Lambda})^{-1} (\mathbf{P}_\Lambda{\bf f})$. 
Using Property \ref{prop:inf-matrix} and applying Proposition \ref{propos:spars-res} to $(\widehat{\mathbf{A}}_\Lambda)^{-1}$ we get 
\[
\|{\bf u}_\Lambda\|_{\ell_G^{2^{\tau_1}{\eta_1},{t_1}}}   = 
\|({\widehat{\mathbf{A}}_\Lambda})^{-1} (\mathbf{P}_\Lambda{\bf f})\|_{\ell_G^{2^{\tau_1}{\eta_1},{t_1}}}  \lesssim 
\| \mathbf{P}_\Lambda{\bf f} \|_{\ell_G^{{\eta_2},{t_2}}}  \leq \| {\bf f} \|_{\ell_G^{{\eta_2},{t_2}}}  \;, 
\]
with 
\[
 2^{\tau_1}\eta_1={\zeta(t_2)\eta_2 < \eta_2} \ , \qquad 
 {t_1}=\frac{t_2}{1+t_2}<t_2 \ .
 \]
By substituting the above inequality  into \eqref{eq:aux-2} and  using again Proposition \ref{propos:spars-res} we get 
\begin{equation}
\| {\bf r}_\Lambda\|_{\ell_G^{\bar{\eta},\bar{t}}} 
\lesssim 
\| {\bf u} \|_{\ell_G^{2^{\tau_1}{\eta_1},{t_1}}} +
\| {\bf f} \|_{\ell_G^{{\eta_2},{t_2}}}  = 
\| {\bf u} \|_{\ell_G^{2^{\tau_1}{\eta_1},{t_1}}} +
\| \mathbf{A}{\bf u} \|_{\ell_G^{{\eta_2},{t_2}}}
\lesssim  
\| {\bf u} \|_{\ell_G^{\eta,t}}
\end{equation}
where
\[
\eta_2={\zeta(t)\eta  <\eta}\ , \qquad 
 {t_2}=\frac{t}{1+t}<t \ .
 \]
This shows that the thesis holds true for the choice
\[
 \bar{\eta}=\Big(\frac12\Big)^{\frac{t}{d(1+2t)}}
 \zeta \Big(\frac{t}{1+2t}\Big)
 \zeta \Big(\frac{t}{1+t}\Big)
 \zeta (t)
 \eta,
\qquad
\bar t = \frac{t}{1+3t}.
\]
It remains to verify the assumptions of Proposition
\ref{propos:spars-res} when $\bA$ is dense. Since $\omega_d\geq 2$ and  
\[
t_1 = \frac{t}{1+2t} < t_2 = \frac{t}{1+t} < t,
\]
we have {$\omega_d^{\tau_1}<\omega_d^{\tau_2}<\omega_d^{\tau}$}.
Moreover, using $\eta_1<2^{\tau_1}\eta_1<\eta_2<\eta$ 
{ and $\eta_L \ge \bar\eta_L > \omega_d^{-\tau_1}\eta$ yields
%
%\begin{eqnarray}
%\eta_L\geq \bar\eta_L&\geq
%&& \omega_d^{-\tau_1}\eta >\omega_d^{-\tau}\eta\nonumber\\
%\eta_L\geq \bar\eta_L&\geq
%&& \omega_d^{-\tau_1}\eta>\omega_d^{-\tau_1}\eta_1, \nonumber\\
%\bar\eta_L&\geq
%&& \omega_d^{-\tau_1}\eta>\omega_d^{-\tau_2}\eta>
%\omega_d^{-\tau_2}\eta_2\ .\nonumber
%\end{eqnarray}
%
\[
\eta<\omega_d^\tau \eta_L, 
\qquad
\eta_1 < \omega_d^{\tau_1}\eta_L,
\qquad
\eta_2 < \omega_d^{\tau_2}\bar\eta_L,
\]
which are the required conditions to apply Proposition 
\ref{propos:spars-res} when $\bA$ is dense. This concludes the proof.}
\end{proof}

{
\begin{remark}[{definition of $\omega_d$}]{\rm
The limitation $1\leq d \leq 10$ stems from the fact that the measure
of the unit {Euclidean ball $\omega_d$} in $\mathbb{R}^d$
monotonically decreases to $0$ as $d \to \infty$. To avoid such a restriction, one could modify
the definition of the Gevrey classes $G^{\eta,t}_p(\Omega)$ given in (\ref{eq:gevrey-class}), by replacing
the Euclidean norm $|k|=\Vert k \Vert_2$ appearing in the exponential by the maximum norm $\Vert k \Vert_\infty$. 
Consequently, throughout the rest of the paper $\omega_d$ would be replaced by the quantity $2^d$, strictly
larger than 1 for any $d$. \endproof
 }
\end{remark}
}

%%%%%%%%%%%%%%%%%%%%%%%%%%%%%%%%%%%%%%%%%%%%%%%%%%%%%%%%%%%%%%%%%%%%%%%%%%%%%%%%%%%%%%
\section{Coarsening}\label{S:coarsening}
%%%%%%%%%%%%%%%%%%%%%%%%%%%%%%%%%%%%%%%%%%%%%%%%%%%%%%%%%%%%%%%%%%%%%%%%%%%%%%%%%%%%%%

\newcommand{\wg}{{\ell^{\frac{\eta}2,t}_G({\mathbb{Z}}}}
\newcommand{\uu}{{\bf{u}}}
\newcommand{\vv}{{\bf{v}}}
\newcommand{\ww}{{\bf{w}}}
\newcommand{\zz}{{\bf{z}}}
\renewcommand{\aa}{{\bf{a}}}
\newcommand{\bb}{{\bf{b}}}
\newcommand{\cc}{{\bf{c}}}

We start by considering an example that sheds light on the role of
coarsening for the exponential case. We then state and prove a
seemingly new coarsening result, which is valid for both classes.

%------------------------------------------------------------------------------------
\subsection{Example of coarsening}\label{S:example-coarse}
%------------------------------------------------------------------------------------
Let $\aa,\bb\in\mathbb{R}^p$ for $p\ge1$ be the vectors
\[
\aa := (1,0,\cdots,0),
\quad
\bb := \frac{1}{p} (1,1,\cdots,1). 
\]
Let $\vv,\zz$ be the sequences defined by
\[
\vv := \big( {\rm e}^{-\eta k} \aa \big)_{k=0}^\infty,
\quad
\zz := \big( {\rm e}^{-\eta k} \bb \big)_{k=0}^\infty.
\]
We first observe that
\[
\|\vv\|^2 = {p\|\zz\|^2} = \frac{1}{1-{\rm e}^{2\eta}}\;,
\qquad
\Vert \vv \Vert_{\ell^{2\eta,1}_G(\mathbb{Z})} = 
{p \Vert \zz \Vert_{\ell^{2\eta/p,1}_G(\mathbb{Z})} = 1}
\]
(recall that $\omega_d=2$ for $d=1$). Given a parameter $\vare<1$,
we now construct a perturbation $\ww$ of $\vv$ which is much less
sparse {than $\bv$} by simply scaling $\zz$ and adding it to
$\vv$ {(see Fig. \ref{fig:coarsening} (a))}:
\[
\ww := \vv + \vare \zz = \big({\rm e}^{-\eta k}
(\aa+\vare\bb)\big)_{k=1}^\infty\; .
\]
%
%%%%%%%%%%%%%%%%%%%%%%%%%%%%%%%%%%%%%%%%%%%%%%%%%%%%%

\begin{figure}[!htbp]
\centering
\subfigure[\label{fig:1L}]{
\includegraphics[width=0.45\textwidth]{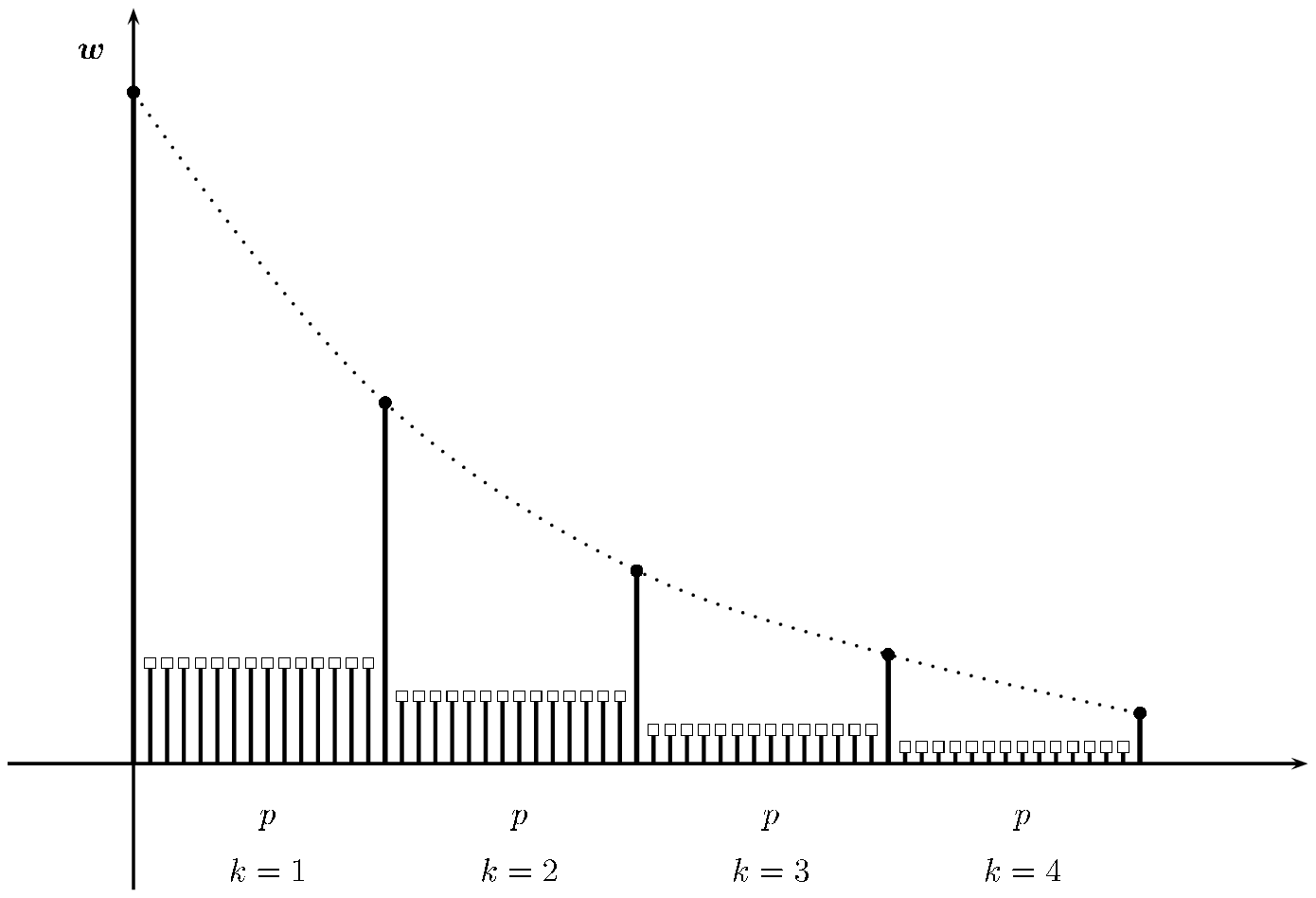}
}
\hspace{0.1cm}
\subfigure[\label{fig:1R}]{
\includegraphics[width=0.45\textwidth]{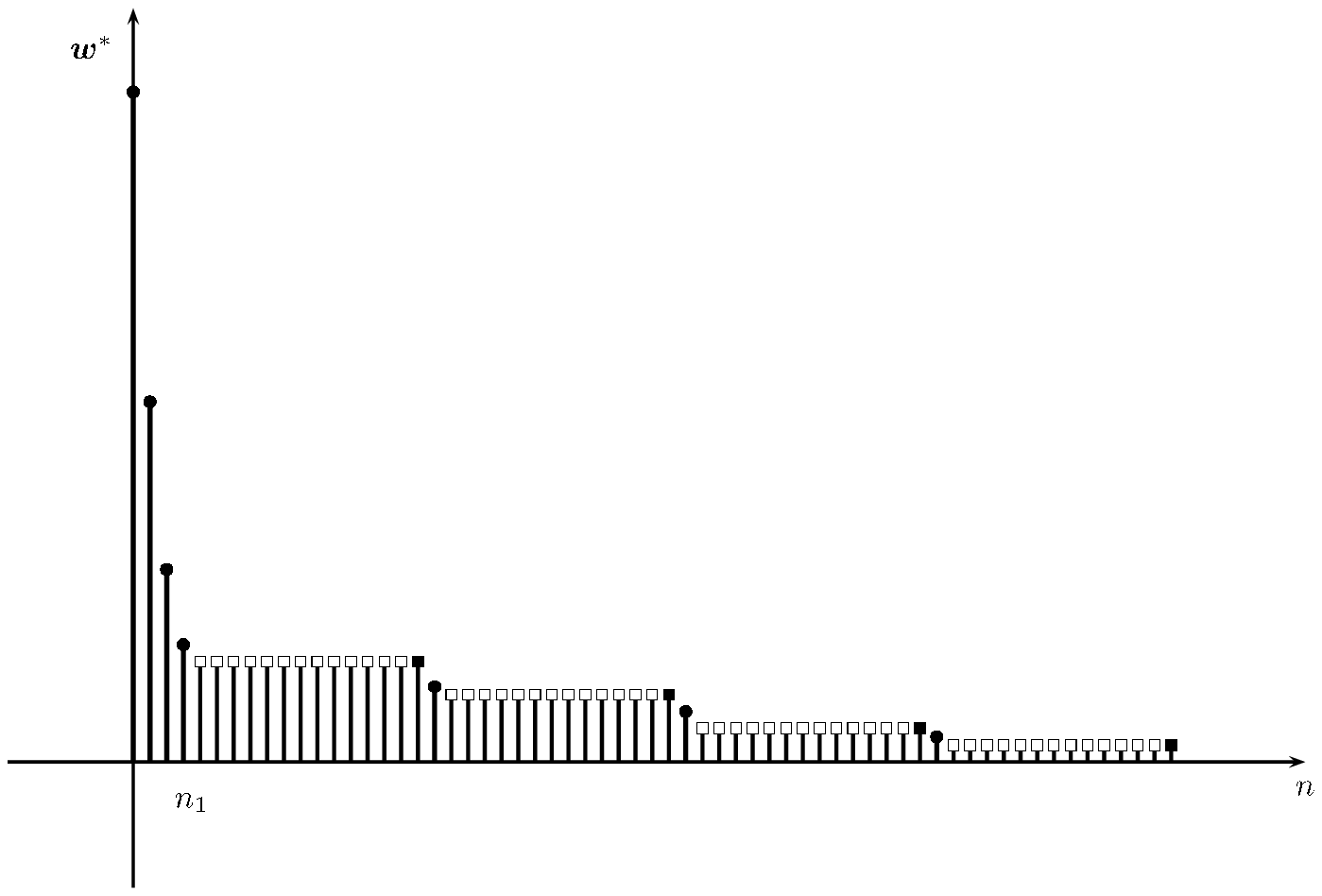}
}
\caption{Pictorial representation of (a) the components of the vector 
${\ww=\vv+\vare\zz}$
and (b) its rearrangement ${\ww}^*$. {It turns out that $\ww^*$ exhibits
the decay rate ${\rm e}^{-k\eta}$ 
of $\vv$ up to a level of accuracy $\|\ww-\vv\|$ in
$\ell^2(\mathbb{Z})$ but a worse decay rate 
${\rm e}^{-k\frac{\eta}{p}}$ of $\zz$ for smaller
tolerances. Therefore, truncating $\ww^*$ with a threshold
$\delta\ge\|\ww-\vv\|$ captures the behavior of $\vv$.}
}
\label{fig:coarsening} 
\end{figure}

%%%%%%%%%%%%%%%%%%%%%%%%%%%%%%%%%%%%%%%%%%%%%%%%%%%%%%%%%%%%%%%%%%%%%%
The first task is to compute the norms of $\ww$. We obviously have
$\|\ww\| \simeq  \|\vv\|$. To determine the weak quasi-norm of $\ww$
we need to find the rearrangement $\ww^*$ (see Fig. \ref{fig:coarsening} (b)). 
Let $n_1$ be the smallest integer such
that
\[
\Big(1+\frac{\vare}{p}\Big) {\rm e}^{-\eta n_1} \ge \frac{\vare}{p}
{\rm e}^{-\eta} > \Big(1+\frac{\vare}{p}\Big) {\rm e}^{-\eta (n_1+1)}\;,
\]
namely the index corresponding to the first crossing of the
exponential curve ${\rm e}^{-\eta n}$ dictating the behavior of the first portion of the
rearranged sequence $\ww^*$ (which coincides with the behavior of $\vv^*$), 
and the first plateaux of $\zz$. This implies
\[
\frac{1}{\eta} \log\Big(1+\frac{p}{\vare}\Big)
< n_1 
\le 1+ \frac{1}{\eta} \log\Big(1+\frac{p}{\vare}\Big)\;.
\]
Next, let $n_2$ be the smallest integer such that
\[
\Big(1+\frac{\vare}{p}\Big) {\rm e}^{-\eta n_2} \ge \frac{\vare}{p}
{\rm e}^{-2\eta} > \Big(1+\frac{\vare}{p}\Big) {\rm e}^{-\eta (n_2+1)}\;,
\]
which corresponds to the beginning of a number of decreasing
exponentials preceeding the second plateaux of $\ww^*$. This implies
\[
1+ \frac{1}{\eta} \log\Big(1+\frac{p}{\vare}\Big)
< n_2 
\le 2+ \frac{1}{\eta} \log\Big(1+\frac{p}{\vare}\Big)
\]
and shows that $n_2-n_1=1$, and that there is exactly one exponential between
the first and second plateaux. Iterating this argument, we see that the difference between two
consecutive {$n_j$'s} is just $1$, and that there is exactly one exponential between
two consecutive plateaux {(see Fig \ref{fig:coarsening} (b))}.

We are now ready to compute the weak quasi-norm of $\ww$. Let $\nu_k$
denote the index corresponding to the end of the $k$-th plateaux
{of $\ww$},
which in turn corresponds to the value $w^*_{\nu_k} =
{\rm e}^{-\eta k}$. Then
\[
\nu_k = pk + n_1 \sim pk + \frac{1}{\eta} \log\Big(1+\frac{p}{\vare}\Big) \;.
\]
To determine the class of $\ww$, we seek $\lambda$ so that
$\ww\in \ell^{\lambda,1}_G(\mathbb{Z})$, namely
\[
\sup_{k\geq 0} \Big({\rm e}^{\lambda \nu_k /2} {\rm e}^{-\eta k}\Big) < \infty
\quad
\Leftrightarrow
\quad
\frac12 \lambda pk - \eta k \le 0
\quad
\Leftrightarrow
\quad
\lambda\le \frac{2\eta}{p} \;.
\]
We thus realize that $\ww\in\ell^{2\eta/p,1}_G(\mathbb{Z})$ belongs to a
sparsity class much worse than that of $\vv$, that deteriorates as
the size $p$ of the plateaux tends to $\infty$. On the other hand,
we note that {the restrictions
  $\ww^*_{|[1,n_1]}=\vv^*_{|[1,n_1]}$ coincide, thereby} showing that the decay rate of
the first part of $\ww^*$ is the same as that of $\vv^*$ 
{(see Fig \ref{fig:coarsening}(b))}.
This example explains the need to coarsen the vector $\ww$ starting at latest 
at $n_1$, to eliminate the tail of $\ww^*$
which decays with rate $2\eta/p$ instead of the optimal rate $2\eta$ of $\vv$.

{In addition, we} observe that the best $n_1$-term approximation of $\ww$ satisfies
\[
\|\ww-\ww_{n_1}\|^2 = \sum_{k=0}^\infty p
\frac{\vare^2}{p^2} {\rm e}^{-2k\eta} = \frac{\vare^2}{p}
\frac{1}{1-{\rm e}^{-2\eta}} = { \|\vv-\ww\|^2 = \vare^2
  \|\zz\|^2}\; ,
\]
{which is precisely the size of the perturbation error of $\vv$.
Given an error tolerance $\delta\ge\vare\|\zz\|$, the best $N$-term 
approximation $\ww_N$ of $\ww$ satisfying $\|\ww-\ww_N\|\le\delta$ 
would require
$$
 N \sim \frac1\eta \log\frac{1}{\delta} = 
\frac2{2\eta} \log\frac{\Vert \vv \Vert_{\ell^{2\eta,1}_G(\mathbb{Z})}}{\delta}\;.
$$
}

%------------------------------------------------------------------------------------
\subsection{New coarsening Result}\label{S:new-coarsening}
%------------------------------------------------------------------------------------
%
We extract the following lesson from the example of Sect.
\ref{S:example-coarse}: for
as long as we deal with the first part of {$\ww^*$}, which has a decay
rate ${\rm e}^{-k\eta}$ dictated by that of {$\vv^*$}, we could coarsen $\ww$
and obtain an approximation of both $\ww$ and $\vv$ with {the} decay
rate ${\rm e}^{-k\eta}$ of $\vv$. This requires limiting the accuracy to
size {$\|\vv-\ww\|$} since a smaller accuracy utilizes the tail of $\ww$ which
has a slower decay ${\rm e}^{-k\frac{\eta}{p}}$. 

We express this heuristics in the following theorem, which goes back 
to Cohen, Dahmen, and DeVore {\cite{CDDV:1998}}.
However, our proof is much more elementary and the statement much more precise.
Although the result holds for the general setting of Sect. \ref{sec:abstract-nl}, we just present it for the
exponential case, since it will be used only in this situation. 

\begin{theorem}[coarsening]\label{T:coarsening}
Let $\vare>0$ and let $v \in {\mathcal A}^{\eta,t}_G$ and $w \in V$ be so that
\[
\|v-w \| \le \vare.
\]
Let $N=N(\vare)$ be the smallest integer such that the best $N$-term
approximation $w_N$ of $w$ satisfies
\[
\|w - w_N\| \le 2\vare.
\]
Then, {$\|v-w_N\|\le3\vare$ and}
\[
N \le \frac{\omega_d}{\ \eta^{d/t}} 
\left(\log\frac{\Vert v \Vert_{{\mathcal A}^{\eta,t}_G}}{\vare}\right)^{d/t}\!\!\!+1 \;.
\]
\end{theorem}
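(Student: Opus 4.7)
The strategy is the by-now classical one: use the Gevrey decay of $v$ to produce a small index set $\Lambda_M$ on which $v$ is already approximated to accuracy $\varepsilon$, then argue that the orthogonal projection of $w$ onto this same set gives a near-$2\varepsilon$ approximation of $w$, and finally invoke minimality of $N$ to conclude $N\le M$. The explicit formula for $M$ will come directly from inverting the Gevrey rate \eqref{eq:nlg.300}.

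The bound $\|v-w_N\|\le 3\varepsilon$ is immediate from the triangle inequality $\|v-w_N\|\le\|v-w\|+\|w-w_N\|\le\varepsilon+2\varepsilon$. For the cardinality estimate, let $M=M(\varepsilon)$ be the smallest nonnegative integer such that
\[
E_M(v)\le\varepsilon,
\]
and let $\Lambda_M\subset\mathbb{Z}^d$ with $|\Lambda_M|=M$ be a set realizing the infimum (or $\varepsilon$-close to it), so that $\|v-P_{\Lambda_M}v\|\le\varepsilon$. Since $v\in\mathcal{A}_G^{\eta,t}$, the estimate $E_M(v)\le\phi(M)\|v\|_{\mathcal{A}_G^{\eta,t}}$ with $\phi(M)=\exp(-\eta\omega_d^{-t/d}M^{t/d})$ shows that this smallest $M$ satisfies
\[
M\le \frac{\omega_d}{\eta^{d/t}}\Big(\log\frac{\Vert v \Vert_{{\mathcal A}^{\eta,t}_G}}{\varepsilon}\Big)^{d/t}+1,
\]
by the same inversion argument leading to \eqref{eq:nl.gen.2o} (the $+1$ absorbs the discreteness).

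The crucial step is then to show $N\le M$. Since $P_{\Lambda_M}w$ is the $\ell^2$-orthogonal projection of $w$ onto $V_{\Lambda_M}$, it is the best approximant of $w$ from that subspace; comparing with the trial element $P_{\Lambda_M}v$ and using $\|v-w\|\le\varepsilon$ together with the fact that $P_{\Lambda_M}$ is a contraction gives
\[
\|w-P_{\Lambda_M}w\|\le\|w-P_{\Lambda_M}v\|\le\|w-v\|+\|v-P_{\Lambda_M}v\|\le\varepsilon+\varepsilon=2\varepsilon.
\]
Hence the set $\Lambda_M$ provides an $M$-term approximation of $w$ within $2\varepsilon$, so $E_M(w)\le 2\varepsilon$, and therefore by the minimality in the definition of $N(\varepsilon)$ we conclude $N\le M$. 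Combining this with the estimate on $M$ above yields the claimed bound.

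I do not expect any real obstacle: both halves reduce to triangle-inequality arguments, and the only quantitative ingredient is the Gevrey decay rate of $E_M(v)$ from the assumption $v\in\mathcal{A}_G^{\eta,t}$. The point where our argument improves on earlier proofs is the direct use of the best $M$-term set $\Lambda_M$ of $v$ as a trial set for $w$, which avoids the sorting/thresholding machinery used in \cite{CDDV:1998} and explains why the statement is valid for both algebraic and exponential classes with essentially the same proof.
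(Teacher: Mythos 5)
Your proof is correct and follows essentially the same route as the paper's: pick the minimal index set on which $v$ is resolved to accuracy $\varepsilon$ (whose cardinality is bounded via the inverse of the Gevrey rate, i.e.\ \eqref{eq:nl.gen.2o} with \eqref{eq:nlg.301}), show by the triangle inequality that projecting $w$ onto that same set already gives a $2\varepsilon$-approximation, and invoke minimality of $N$. The only cosmetic difference is that you use the best-approximation property of $P_{\Lambda_M}w$ while the paper splits $w=v+z$ and uses that $I-P_{\Lambda_\varepsilon}$ is a contraction; these are interchangeable one-line arguments.
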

\begin{proof}
Let $\Lambda_\vare$ be the set of indices corresponding to the best
approximation of {$v$} with accuracy $\vare$. So $\Lambda_\vare$ is a
minimal set with properties
\[
\|v - P_{\Lambda_\vare} v\|\le \vare,
\qquad
|\Lambda_\vare| \le \frac{\omega_d}{\ \eta^{d/t}} 
\left(\log\frac{\Vert v \Vert_{{\mathcal A}^{\eta,t}_G}}{\vare}\right)^{d/t}\!\!\! +1 \;.
\]
If $z=w-v$, then
\begin{align*}
\|w - P_{\Lambda_\vare}w \| & \le \|(v+z) - P_{\Lambda_\vare}(v+z)\|
= \|(v- P_{\Lambda_\vare} v) + (z-P_{\Lambda_\vare} z )\|
\\
& \le \|v- P_{\Lambda_\vare} v\| + \|z-P_{\Lambda_\vare} z\|
 \le \vare + \|z\| \le 2\vare \;,
\end{align*}
because $I-P_{\Lambda_\vare}$ is the projector onto $V_{\mathbb{Z}^d \setminus \Lambda_\vare}$.
Since $N$ is the cardinality of the smallest set satisfying the above
relation, we deduce that $N \le |\Lambda_\vare|$. This concludes the proof.
\end{proof}

%%%%%%%%%%%%%%%%%%%%%%%%%%%%%%%%%%%%%%%%%%%%%%%%%%%%%%%%%%%%%%%%%%%%%%%%%%%%%%%%%%%%%%
\section{Optimality properties of adaptive algorithms: algebraic case}\label{sec:complexity}
%%%%%%%%%%%%%%%%%%%%%%%%%%%%%%%%%%%%%%%%%%%%%%%%%%%%%%%%%%%%%%%%%%%%%%%%%%%%%%%%%%%%%%

The rest of the paper will be devoted to investigating complexity issues for the sequence of approximations 
$u_n=u_{\Lambda_n}$ generated by any of the adaptive algorithms presented in Sect. \ref{sec:plain-adapt-alg}. 
In particular, we wish to estimate the cardinality
of each $\Lambda_n$ and check whether its growth is ``optimal'' with respect to the sparsity class 
${\mathcal A}_\phi$ of the exact solution, in the sense that $|\Lambda_n|$ is comparable to 
the cardinality of the index set of the best approximation of $u$ yielding the same error
$\Vert u - u_n \Vert$.  

The algebraic case will be dealt with in the present section, whereas the exponential case will be analyzed
in the next one. The two cases differ in that no coarsening is needed for optimality in the former case,
whereas we will prove optimality in the latter case only for the algorithms that incorporate a coarsening step.
The reason of such a difference can be {attributed}, on the one hand, to the slower growth of the activated degrees
of freedom in the exponential case as opposed to the algebraic case and, on the other hand, to the discrepancy 
in the sparsity classes of the residuals and the solution in the
exponential case, {discussed in Sect. \ref{subsec:spars-res-exp}.}

%---------------------------------------------------------------------------------
\subsection{ADFOUR with moderate D\"orfler marking}\label{subsec:moderate-adfour-alg}
%---------------------------------------------------------------------------------

The approach followed in the sequel, which has been proposed in \cite{Gantumur-Stevenson:2007} 
in the wavelet framework and adopted in \cite{Stevenson:2007, Nochetto-et-al:2008} 
in the finite-element framework, allows us to prove the optimality of the algorithm in the algebraic case, provided
D\"orfler marking is not too aggressive.

The two following  lemmas will be useful in the subsequent analysis.

\begin{lemma}[{localized a posteriori upper bound}]\label{lem:optim1}
Let $\Lambda \subset \Lambda_* \subset \mathbb{Z}^d$ be nonempty subsets of indices. Let $u_\Lambda \in V_\Lambda$
and $u_{\Lambda_*} \in V_{\Lambda_*}$ be the Galerkin approximations of Problem (\ref{eq:four.1}). Then
$$
\tvert u_{\Lambda_*} - u_{\Lambda} \tvert^2 \leq \frac1{\alpha_*} 
\sum_{k \in \Lambda_* \setminus \Lambda} |\hat{R}_k(u_\Lambda)|^2 
= \frac1{\alpha_*} \eta^2(u_\Lambda, \Lambda_*) \;.
$$
\end{lemma}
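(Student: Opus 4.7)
The plan is to exploit the nested Galerkin orthogonality together with Parseval/Cauchy--Schwarz applied only to the new indices $\Lambda_*\setminus\Lambda$, and then convert the $H^1_p$-norm on one factor into the energy norm via \eqref{eq:four.1bis} to absorb it into the left-hand side.

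First, I would set $e := u_{\Lambda_*}-u_\Lambda\in V_{\Lambda_*}$ and write, using the Galerkin identity at both levels (which gives $a(u-u_{\Lambda_*},v)=0$ for all $v\in V_{\Lambda_*}$ and in particular for $v=e$),
\[
\tvert e\tvert^2 \;=\; a(u_{\Lambda_*}-u_\Lambda,e) \;=\; a(u-u_\Lambda,e) \;=\; \langle r(u_\Lambda),e\rangle.
\]
Expanding the duality pairing in the Fourier basis and using $\hat r_k\hat e_k = \hat R_k\hat E_k$ (the $c_k$ factors cancel between the $H^{-1}_p$ and $H^1_p$ normalizations), I get
\[
\tvert e\tvert^2 \;=\; \sum_{k\in\mathbb{Z}^d}\hat R_k(u_\Lambda)\,\hat E_k .
\]

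Next I would localize this sum to $\Lambda_*\setminus\Lambda$: the terms with $k\notin\Lambda_*$ vanish because $e\in V_{\Lambda_*}$ forces $\hat E_k=0$ there, while the terms with $k\in\Lambda$ vanish by the Galerkin condition \eqref{eq:four.2.1ter} which gives $\hat R_k(u_\Lambda)=0$. Applying Cauchy--Schwarz on the remaining index set yields
\[
\tvert e\tvert^2 \;\le\; \Big(\sum_{k\in\Lambda_*\setminus\Lambda}|\hat R_k(u_\Lambda)|^2\Big)^{1/2}\Big(\sum_{k\in\Lambda_*\setminus\Lambda}|\hat E_k|^2\Big)^{1/2} \;\le\; \eta(u_\Lambda;\Lambda_*\setminus\Lambda)\,\Vert e\Vert.
\]

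Finally I would use the lower equivalence $\sqrt{\alpha_*}\Vert e\Vert\le\tvert e\tvert$ from \eqref{eq:four.1bis} to bound $\Vert e\Vert\le\tvert e\tvert/\sqrt{\alpha_*}$, which after cancellation of one factor of $\tvert e\tvert$ gives $\tvert e\tvert\le\alpha_*^{-1/2}\,\eta(u_\Lambda;\Lambda_*\setminus\Lambda)$. Squaring yields the claim; the equality with $\eta^2(u_\Lambda,\Lambda_*)$ written in the statement follows trivially because the additional indices in $\Lambda$ contribute zero to $\sum_{k\in\Lambda_*}|\hat R_k(u_\Lambda)|^2$. I do not anticipate any real obstacle here: the only mild point is keeping track of the normalization constants $c_k$ so that $\hat r_k\hat e_k$ rewrites cleanly as $\hat R_k\hat E_k$, and making sure the duality pairing is applied between $H^{-1}_p$ (containing $r(u_\Lambda)$) and $H^1_p$ (containing $e$).
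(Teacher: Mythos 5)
Your proposal is correct and follows essentially the same route as the paper: identify $\tvert u_{\Lambda_*}-u_\Lambda\tvert^2$ with the residual paired against $u_{\Lambda_*}-u_\Lambda$, localize the resulting Fourier sum to $\Lambda_*\setminus\Lambda$ using the support of the difference and the Galerkin condition $\hat r_k(u_\Lambda)=0$ on $\Lambda$, and conclude by Cauchy--Schwarz together with the coercivity bound $\sqrt{\alpha_*}\Vert\cdot\Vert\le\tvert\cdot\tvert$. Your write-up merely makes explicit the normalization bookkeeping and the final absorption step that the paper leaves implicit.
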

\begin{proof}
One has
$$
\tvert u_{\Lambda_*} - u_{\Lambda} \tvert^2 =a( u_{\Lambda_*} - u_{\Lambda}, u_{\Lambda_*} - u_{\Lambda})
=(f, u_{\Lambda_*} - u_{\Lambda})-a(u_\Lambda, u_{\Lambda_*} - u_{\Lambda})=
\sum_{k \in \Lambda_*}\hat{r}_k(u_\Lambda)
{(\hat{u}_{\Lambda_*} - \hat{u}_{\Lambda})_k}
$$
{because $\Lambda_*$ is the support of $u_{\Lambda_*} - u_{\Lambda}$.}
The {asserted} result follows immediately by the Cauchy-Schwarz inequality, 
{upon} recalling that $\hat{r}_k(u_\Lambda)=0$
for all $k \in \Lambda$.
\end{proof}

\begin{lemma}[{D\"orfler property}]\label{lem:optim2}
Let $\Lambda \subset \Lambda_* \subset \mathbb{Z}^d$ be nonempty subsets of indices. Let $u_\Lambda \in V_\Lambda$
and $u_{\Lambda_*} \in V_{\Lambda_*}$ be the Galerkin approximations of Problem (\ref{eq:four.1}). 
{Let} the marking parameter $\theta$ satisfies $\theta \in (0,\theta_*)$, where
$\theta_*=\sqrt{\frac{\alpha_*}{\alpha^*}}$,
{and} set $\mu_\theta=1-\frac{\alpha^*}{\alpha_*}\theta^2>0$.  If 
$$
\tvert u- u_{\Lambda_*}  \tvert^2 \leq \mu \tvert u - u_{\Lambda} \tvert^2 \;,
$$
for some $\mu \in (0, \mu_\theta]$, then $\Lambda^*$ fulfils D\"orfer's condition, i.e.,
$$
\eta(u_{\Lambda}, \Lambda^*) \geq \theta \eta(u_{\Lambda}) \;.
$$
\end{lemma}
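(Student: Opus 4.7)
The plan is to combine Galerkin orthogonality with the localized upper bound of Lemma \ref{lem:optim1} and the global lower a posteriori bound coming from the estimator-energy-norm equivalence, and then to check that the restriction $\mu\le\mu_\theta$ is exactly what is needed to conclude.

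First, since $V_\Lambda\subset V_{\Lambda_*}$, the Galerkin solution $u_\Lambda$ is also the $a$-orthogonal projection of $u_{\Lambda_*}$ onto $V_\Lambda$, so $a(u-u_{\Lambda_*},u_{\Lambda_*}-u_\Lambda)=0$ and we obtain the Pythagorean identity
\[
\tvert u-u_\Lambda\tvert^2 \;=\; \tvert u-u_{\Lambda_*}\tvert^2 + \tvert u_{\Lambda_*}-u_\Lambda\tvert^2.
\]
Invoking the hypothesis $\tvert u-u_{\Lambda_*}\tvert^2\le\mu\tvert u-u_\Lambda\tvert^2$ gives the key intermediate inequality
\[
\tvert u_{\Lambda_*}-u_\Lambda\tvert^2 \;\ge\; (1-\mu)\tvert u-u_\Lambda\tvert^2.
\]

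Next, I would bound the left-hand side above via Lemma \ref{lem:optim1}:
\[
\tvert u_{\Lambda_*}-u_\Lambda\tvert^2 \;\le\; \frac{1}{\alpha_*}\,\eta^2(u_\Lambda,\Lambda_*),
\]
and the right-hand side below by the leftmost inequality in \eqref{eq:four.2.1bis} applied to $u_\Lambda$, namely $\tvert u-u_\Lambda\tvert \ge \frac{1}{\sqrt{\alpha^*}}\,\eta(u_\Lambda)$. Chaining these two bounds with the Pythagorean estimate yields
\[
\frac{1}{\alpha_*}\,\eta^2(u_\Lambda,\Lambda_*) \;\ge\; \frac{1-\mu}{\alpha^*}\,\eta^2(u_\Lambda),
\qquad\text{i.e.}\qquad
\eta^2(u_\Lambda,\Lambda_*) \;\ge\; \frac{\alpha_*}{\alpha^*}(1-\mu)\,\eta^2(u_\Lambda).
\]

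Finally, the hypothesis $\mu\le\mu_\theta=1-\tfrac{\alpha^*}{\alpha_*}\theta^2$ is equivalent to $\tfrac{\alpha_*}{\alpha^*}(1-\mu)\ge\theta^2$, so that the previous display becomes $\eta(u_\Lambda,\Lambda_*)\ge\theta\,\eta(u_\Lambda)$, which is the D\"orfler condition. The condition $\theta<\theta_*=\sqrt{\alpha_*/\alpha^*}$ is precisely what makes $\mu_\theta>0$, so the statement is nonvacuous. No real obstacle is expected: the only subtle point is keeping track of the two factors $\alpha_*/\alpha^*$ (one from the upper estimator bound, one from the lower) which together produce the squared ratio $\alpha^*/\alpha_*$ appearing in $\mu_\theta$ and explain the ``moderate'' upper restriction on $\theta$.
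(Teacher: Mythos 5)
Your proof is correct and follows essentially the same route as the paper's: the energy-norm Pythagoras identity plus the hypothesis gives $\tvert u_{\Lambda_*}-u_\Lambda\tvert^2\ge(1-\mu)\tvert u-u_\Lambda\tvert^2$, which is then sandwiched between the localized upper bound of Lemma \ref{lem:optim1} and the lower a posteriori bound $\tvert u-u_\Lambda\tvert\ge\eta(u_\Lambda)/\sqrt{\alpha^*}$, and the restriction $\mu\le\mu_\theta$ is exactly what turns the resulting factor $\frac{\alpha_*}{\alpha^*}(1-\mu)$ into $\theta^2$. The only (immaterial) difference is bookkeeping: you cite the energy-norm equivalence \eqref{eq:four.2.1bis} directly, whereas the paper nominally refers to \eqref{eq:four.2.3} while in fact using the same inequality.
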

\proof
{Since $u - u_{\Lambda_*} \perp u_{\Lambda} - u_{\Lambda_*}$ in
  the energy norm because of Pythagoras, 
the assumption yields}
$$
\tvert u - u_{\Lambda} \tvert^2 = \tvert u - u_{\Lambda_*} \tvert^2
+ \tvert u_{\Lambda_*} - u_{\Lambda} \tvert^2 \leq \mu \tvert u - u_{\Lambda} \tvert^2 
+ \tvert u_{\Lambda_*} - u_{\Lambda} \tvert^2 \; .
$$
{Invoking the lower bound in (\ref{eq:four.2.3}) gives} 
$$
\tvert u_{\Lambda_*} - u_{\Lambda} \tvert^2 \geq (1-\mu) \tvert u - u_{\Lambda} \tvert^2 \geq
(1-\mu)\frac1{\alpha^*} \eta^2(u_{\Lambda}) \; ,
$$
{whence applying Lemma \ref{lem:optim1} implies}
$$
\eta^2(u_\Lambda, \Lambda_*) \geq (1-\mu)\frac{\alpha_*}{\alpha^*} \eta^2(u_{\Lambda}) \geq
(1-\mu_\theta)\frac{\alpha_*}{\alpha^*} \eta^2(u_{\Lambda}) = \theta^2 \eta^2(u_{\Lambda}).
$$
{This concludes the proof.}\endproof

We are ready to estimate the growth of degrees of freedom 
{generated by the algorithm {\bf ADFOUR} of}
Sect. \ref{sec:defADFOUR}. For the moment, we place ourselves in the abstract framework of 
Sect. \ref{sec:abstract-nl}, only the final result {being
  specifically for} the algebraic case.

\begin{proposition}[{cardinality of $\partial\Lambda_n$}]\label{prop:optim1}
Let $\theta$ satisfy the condition stated in Lemma \ref{lem:optim2}, and 
let $\mu \in (0,\mu_\theta]$ be fixed.
 Let $\{\Lambda_n, \, u_n \}_{n\geq 0}$ be the sequence generated by the adaptive algorithm 
{\bf ADFOUR}, {and set} $\varepsilon_n^2=\mu \tvert u - u_n \tvert^2$. If the solution $u$ belongs to 
the sparsity class ${\mathcal A}_\phi$, then
\begin{equation}
|\partial \Lambda_n|=|\Lambda_{n+1}|-|\Lambda_n| \leq  \kappa \, 
\phi^{-1}\left(\frac{\varepsilon_n}{\Vert u \Vert_{{\mathcal
        A}_\phi}}\right) \;, 
\qquad{\forall\ n\ge0\;,}
\end{equation}
{where $\kappa>1$ is the constant in \eqref{eq:nl.gen.2}.}
\end{proposition}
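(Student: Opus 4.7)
The plan is to produce a small index set $\Lambda_\varepsilon$ from nonlinear approximation theory, show that $\Lambda_n\cup\Lambda_\varepsilon$ already triggers the D\"orfler condition through Lemma~\ref{lem:optim2}, and then invoke the minimality built into Assumption~\ref{ass:minimality}. This is essentially the Gantumur--Stevenson total-error manoeuvre ported from wavelets/FEM to the Fourier setting.

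Specifically, since $u\in\mathcal{A}_\phi$, the estimate \eqref{eq:nl.gen.2} applied with accuracy $\varepsilon_n/\sqrt{\alpha^*}$ furnishes a set $\Lambda_\varepsilon\subset\mathbb{Z}^d$ such that $\Vert u-P_{\Lambda_\varepsilon}u\Vert\le \varepsilon_n/\sqrt{\alpha^*}$ and
$$
|\Lambda_\varepsilon|\le\kappa\,\phi^{-1}\!\left(\frac{\varepsilon_n}{\sqrt{\alpha^*}\,\Vert u\Vert_{\mathcal{A}_\phi}}\right).
$$
Setting $\Lambda_*:=\Lambda_n\cup\Lambda_\varepsilon$, the minimality of $u_{\Lambda_*}$ in the energy norm on $V_{\Lambda_*}\supseteq V_{\Lambda_\varepsilon}$ together with \eqref{eq:four.1bis} gives
$$
\tvert u-u_{\Lambda_*}\tvert\le \tvert u-P_{\Lambda_\varepsilon}u\tvert\le \sqrt{\alpha^*}\,\Vert u-P_{\Lambda_\varepsilon}u\Vert\le\varepsilon_n=\sqrt{\mu}\,\tvert u-u_n\tvert,
$$
i.e. $\tvert u-u_{\Lambda_*}\tvert^2\le\mu\,\tvert u-u_n\tvert^2$.

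Applying Lemma~\ref{lem:optim2} with the pair $\Lambda_n\subset\Lambda_*$ yields $\eta(u_n,\Lambda_*)\ge\theta\,\eta(u_n)$. Because $\hat{r}_k(u_n)=0$ for $k\in\Lambda_n$ by \eqref{eq:four.2.1ter}, only indices in $\Lambda_*\setminus\Lambda_n\subseteq\Lambda_\varepsilon$ contribute to $\eta(u_n,\Lambda_*)$, so this latter subset (lying in $\Lambda_n^c$) also satisfies the D\"orfler condition \eqref{eq:four.2.5.5ter}. Assumption~\ref{ass:minimality} then forces
$$
|\partial\Lambda_n|\le|\Lambda_*\setminus\Lambda_n|\le|\Lambda_\varepsilon|,
$$
which is the announced bound, modulo the spurious factor $\sqrt{\alpha^*}$ inside $\phi^{-1}$ that can be absorbed into a redefinition of the constant $\kappa$ (and which in the algebraic case $\phi^{-1}(\lambda)=\lambda^{-d/s}$ amounts to a harmless multiplicative factor $(\alpha^*)^{d/(2s)}$).

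The argument is not computationally deep; the one delicate point is conceptual, namely reconciling the D\"orfler threshold, which lives on the \emph{residual} (dual) side, with the nonlinear approximation hypothesis, which gives information on the \emph{primal} side. Lemma~\ref{lem:optim2} bridges the two via the Pythagorean identity $\tvert u-u_n\tvert^2=\tvert u-u_{\Lambda_*}\tvert^2+\tvert u_{\Lambda_*}-u_n\tvert^2$ combined with the two-sided a~posteriori bound \eqref{eq:four.2.1bis}; the range restriction $\mu\le\mu_\theta$ (equivalently $\theta<\sqrt{\alpha_*/\alpha^*}$) is exactly what is needed for this bridge to close, and explains why the present optimality argument requires \emph{moderate}, not \emph{aggressive}, D\"orfler marking.
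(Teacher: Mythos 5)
Your argument coincides with the paper's own proof: construct $\Lambda_\varepsilon$ from \eqref{eq:nl.gen.2}, take the overlay $\Lambda_*=\Lambda_n\cup\Lambda_\varepsilon$, use minimality of the Galerkin solution on $V_{\Lambda_*}\supseteq V_{\Lambda_\varepsilon}$ to verify the hypothesis of Lemma \ref{lem:optim2}, and conclude from the minimal-cardinality Assumption \ref{ass:minimality} that $|\partial\Lambda_n|\le|\Lambda_\varepsilon|$. The only differences are cosmetic and in your favor: you explicitly track the energy-versus-$H^1_p$ norm factor $\sqrt{\alpha^*}$ (which the paper silently absorbs into the constant), and your observation that the residual vanishes on $\Lambda_n$, so that the subset $\Lambda_*\setminus\Lambda_n\subseteq\Lambda_\varepsilon$ already satisfies the D\"orfler condition, makes the final minimality step slightly more precise than the paper's phrasing.
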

\begin{proof}
{Let $\varepsilon=\varepsilon_n$ and make use of
  (\ref{eq:nl.gen.2}) for $u\in{\mathcal A}_\phi$}:
 there exists $\Lambda_{\varepsilon}$ 
and $w_{\varepsilon} \in V_{\Lambda_{\varepsilon}}$ such that
$$
\tvert u - w_{\varepsilon} \tvert^2 \leq \varepsilon^2 \qquad \text{and} \qquad 
|\Lambda_{\varepsilon}| \leq  \kappa \, \phi^{-1}\left(\frac{\varepsilon}{\Vert u \Vert_{{\mathcal A}_\phi}}\right).
$$
Let $\Lambda_*=\Lambda_n \cup \Lambda_\epsilon$ be the overlay of the two index sets, and let 
$u_* \in V_{\Lambda_*}$ be the Galerkin approximation of Problem (\ref{eq:four.1}). Then, since 
$V_{\Lambda_\epsilon} \subseteq V_{\Lambda_*}$, we have
$$
\tvert u - u_* \tvert^2 \leq \tvert u - w_{\varepsilon} \tvert^2 \leq \mu \tvert u - u_n \tvert^2 \;.
$$
Thus, we are entitled to apply Lemma \ref{lem:optim2} to $\Lambda_n$ and $\Lambda_*$, yielding
$$
\eta(u_n, \Lambda^*) \geq \theta \eta(u_n) \;.
$$
By the minimality property of the cardinality of $\Lambda_{n+1}$ among all sets satisfying D\"orfler {property}
for $u_n$ (Assumption \ref{ass:minimality}), we deduce that
$|\Lambda_{n+1}|\leq |\Lambda_*| \leq |\Lambda_n| + |\Lambda_\epsilon|$, i.e.,
\begin{equation}\label{eq:optim11}
|\Lambda_{n+1}| - |\Lambda_n| \leq |\Lambda_\epsilon| \;,
\end{equation}
whence the result.
\end{proof}

\begin{corollary}[{cardinality of $\Lambda_n$: general case}]\label{cor:optim1}
{Let the assumptions of Proposition \ref{prop:optim1} be valid
  and $\rho = \sqrt{1-\frac{\alpha_*}{\alpha^*}\theta^2}$
be given by (\ref{eq:def_rhotheta}). Then}
\begin{equation}\label{eq:optim12}
|\Lambda_n| \leq \kappa \, \sum_{k=0}^{n-1} \phi^{-1}\left(\rho^{k-n} \, 
\frac{\varepsilon_n}{\Vert u \Vert_{{\mathcal A}_\phi}}\right)  \;, 
\qquad {\forall\ n\ge0 \;.}
\end{equation}
\end{corollary}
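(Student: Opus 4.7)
The plan is to sum the per-step increment bound of Proposition \ref{prop:optim1} after rewriting each $\varepsilon_k$ in terms of $\varepsilon_n$ via the contraction property of Theorem \ref{teo:four1}.

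First, I would note that since $\Lambda_0=\emptyset$ and $\Lambda_{k+1}=\Lambda_k\cup\partial\Lambda_k$, we have the telescoping bound
\[
|\Lambda_n| \le \sum_{k=0}^{n-1}\big(|\Lambda_{k+1}|-|\Lambda_k|\big) = \sum_{k=0}^{n-1}|\partial\Lambda_k|,
\]
and by Proposition \ref{prop:optim1} applied at each iteration,
\[
|\partial\Lambda_k| \le \kappa\,\phi^{-1}\!\Big(\frac{\varepsilon_k}{\Vert u\Vert_{{\mathcal A}_\phi}}\Big), \qquad 0\le k\le n-1,
\]
where $\varepsilon_k = \sqrt{\mu}\,\tvert u-u_k\tvert$.

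Next, I would invoke Theorem \ref{teo:four1}, which yields $\tvert u-u_{k+1}\tvert \le \rho\,\tvert u-u_k\tvert$ and hence, iterating from $k$ up to $n$,
\[
\tvert u-u_n\tvert \le \rho^{\,n-k}\,\tvert u-u_k\tvert, \qquad 0\le k\le n.
\]
Multiplying by $\sqrt{\mu}$ this translates into $\varepsilon_n\le \rho^{\,n-k}\varepsilon_k$, i.e.
\[
\frac{\varepsilon_k}{\Vert u\Vert_{{\mathcal A}_\phi}} \ge \rho^{\,k-n}\,\frac{\varepsilon_n}{\Vert u\Vert_{{\mathcal A}_\phi}}.
\]
Since $\phi$ is strictly decreasing, so is $\phi^{-1}$; therefore
\[
\phi^{-1}\!\Big(\frac{\varepsilon_k}{\Vert u\Vert_{{\mathcal A}_\phi}}\Big) \le \phi^{-1}\!\Big(\rho^{\,k-n}\,\frac{\varepsilon_n}{\Vert u\Vert_{{\mathcal A}_\phi}}\Big).
\]

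Combining the two displays and summing over $k=0,\dots,n-1$ produces the desired bound \eqref{eq:optim12}. There is no real obstacle here: the argument is essentially a telescoping sum combined with the geometric decay of the energy error guaranteed by Theorem \ref{teo:four1}; the monotonicity of $\phi^{-1}$ is what lets us replace the unknown $\varepsilon_k$ by an explicit geometric multiple of the final $\varepsilon_n$. The only care needed is ensuring the indices are correct (the first increment uses $k=0$ since $\Lambda_0=\emptyset$, and for $k=n$ the factor $\rho^{k-n}=1$ recovers Proposition \ref{prop:optim1} at the last step).
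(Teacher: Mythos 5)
Your proof is correct and follows essentially the same route as the paper: sum the per-step increments from Proposition \ref{prop:optim1}, use the contraction of Theorem \ref{teo:four1} to bound $\varepsilon_k$ from below by $\rho^{k-n}\varepsilon_n$, and conclude by the monotonicity of $\phi^{-1}$. The index bookkeeping and the direction of the monotonicity argument are handled exactly as in the paper's proof.
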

\begin{proof}
Recalling that $|\Lambda_0|=0$, the previous proposition yields
$$
|\Lambda_n| = \sum_{k=0}^{n-1} |\partial\Lambda_k|  \leq \kappa \, \sum_{k=0}^{n-1} \phi^{-1}\left( 
\frac{\varepsilon_k}{\Vert u \Vert_{{\mathcal A}_\phi}}\right)  \;. 
$$
On the other hand, by Theorem \ref{teo:four1} one has
\begin{equation}\label{eq:optim12bis}
{\varepsilon_n = \sqrt{\mu} \tvert u-u_n \tvert 
\leq \sqrt{\mu} \rho^{n-k} \tvert u-u_k \tvert
= \rho^{n-k} \varepsilon_k \qquad\forall\ 0\le k \le n-1 \; ,} 
\end{equation}
and we conclude recalling the monotonicity of $\phi$.
\end{proof}

At this point, we assume to be in the algebraic case, i.e. $u \in {\mathcal A}_B^s$ for some $s>0$. Then, 
(\ref{eq:optim12}) reads
$$
|\Lambda_n| \leq \kappa \, \mu^{-d/2s} 
\tvert u-u_n \tvert^{-d/s} \Vert u \Vert_{{\mathcal A}_B^s}^{d/s}
{ ~\sum_{k=0}^{n-1} \left(\rho^{d/s}\right)^{n-k}\;, 
\qquad\forall\ n\ge0 \;.}
$$ 
Summing-up the geometric series and using (\ref{eq:four.1bis}), we arrive at the following result.

\begin{theorem}[{cardinality of $\Lambda_n$: algebraic case}]\label{teo:optim2}
Under the assumptions of Proposition \ref{prop:optim1}, the growth of the active degrees of freedom
produced by {\bf ADFOUR} in the algebraic case is estimated as follows:
$$
{|\Lambda_n| \le C_* \, \Vert u-u_n \Vert^{-d/s} \Vert u \Vert_{{\mathcal A}_B^s}^{d/s}\;, 
\qquad\forall\ n\ge0} \;,
$$
where the constant $C_*$ depends only on $\alpha_*$, $\mu$ and $\rho$. \endproof
\end{theorem}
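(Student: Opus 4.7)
The plan is essentially bookkeeping, since the paragraph immediately preceding the theorem has already performed the substantive reduction. Starting from Corollary \ref{cor:optim1}, I would plug in the explicit inverse $\phi^{-1}(\lambda)=\lambda^{-d/s}$ from \eqref{eq:nlb.201}, so that each summand becomes a power of $\rho^{d/s}$ times the common factor $\varepsilon_n^{-d/s}\Vert u\Vert_{{\mathcal A}_B^s}^{d/s}$; this is exactly the displayed bound preceding the theorem statement. The key algebraic observation is that $\rho<1$ by \eqref{eq:def_rhotheta}, hence $\rho^{d/s}<1$, so the geometric series $\sum_{k=0}^{n-1}\rho^{(n-k)d/s}$ is bounded by the convergent geometric sum $\rho^{d/s}/(1-\rho^{d/s})$, uniformly in $n$.

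Next, I would recall that $\varepsilon_n = \sqrt{\mu}\,\tvert u-u_n\tvert$ and convert the energy norm back to the $H^1_p$-norm by invoking the equivalence \eqref{eq:four.1bis}, which yields $\tvert u-u_n\tvert^{-d/s}\le (\alpha_*)^{-d/2s}\Vert u-u_n\Vert^{-d/s}$. Collecting the prefactor $\kappa$ from Proposition \ref{prop:optim1}, the factor $\mu^{-d/2s}$, the geometric-series bound, and the norm-equivalence constant into a single quantity $C_*$ then gives the claimed inequality with $C_*$ depending only on $\alpha_*$, $\mu$ and $\rho$ (and, implicitly, on $s$ and $d$ through the exponent).

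There is no real obstacle here, and no single step deserves to be called the hard part: Proposition \ref{prop:optim1} encapsulates the nontrivial content (the use of D\"orfler minimality together with Lemma \ref{lem:optim2}), while Corollary \ref{cor:optim1} already combined that per-step bound with the contraction property from Theorem \ref{teo:four1}. The only tiny point that requires attention is that the geometric series is being summed backward in $k$ from $n-1$ down to $0$, so I would index the sum by $j=n-k$ to display transparently that it equals $\sum_{j=1}^{n}(\rho^{d/s})^j\le \rho^{d/s}/(1-\rho^{d/s})$.
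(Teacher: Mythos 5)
Your proposal is correct and follows exactly the paper's own argument: substitute $\phi^{-1}(\lambda)=\lambda^{-d/s}$ into Corollary \ref{cor:optim1}, sum the geometric series in $\rho^{d/s}$, and pass from the energy norm to the $H^1_p$-norm via \eqref{eq:four.1bis}, absorbing $\kappa$, $\mu^{-d/2s}$, $\alpha_*^{-d/2s}$ and the series bound into $C_*$. The norm-equivalence direction you use ($\tvert u-u_n\tvert\ge\sqrt{\alpha_*}\,\Vert u-u_n\Vert$) is the right one, so nothing further is needed.
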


This result is ``optimal'' in that the number of active degrees of
freedom is {governed}, up to a 
multiplicative constant, by the same law 
{(\ref{eq:nl.gen.2})-\eqref{eq:nlb.200}} as for the best approximation 
of $u$. The optimality of this result is related to the ``sufficiently fast'' growth of the 
active degrees of freedom: the increment of degrees of freedom at each interation may be comparable to 
the total number of previously activated degrees of freedom (geometric growth).

%-----------------------------------------------------------------------------------
\subsection{{A-ADFOUR:} Aggressive ADFOUR}
%-----------------------------------------------------------------------------------
%
We now examine Algorithm {\bf A-ADFOUR}, defined in Sect. \ref{subsec:aggress}, which allows the choice of the
parameter $\theta$ as close to 1 as desired. Such a feature is in the spirit of high regularity, or
equivalently a large value of $s$ for $u\in {\mathcal A}_B^s$. This is a
novel approach which combines the contraction property in Theorem \ref{teo:four3} and the key property of uniform
boundedness of the residuals stated in Proposition \ref{prop:unif-bound-res-alg}.

\begin{theorem}[{cardinality of $\Lambda_n$ for {\bf A-ADFOUR}}]\label{T:aggressive-ADFOUR}
Let the assumptions of Property \ref{prop:inverse.alg} and Theorem \ref{teo:four3} be fulfilled, 
and let $u \in {\mathcal A}_B^s$ for some $s>0$.
Then, the growth of the active degrees of freedom produced by {\bf A-ADFOUR} is estimated as follows:
$$
{|\Lambda_n| \le C_* \, J^d \, \Vert u-u_n \Vert^{-d/s} \Vert u \Vert_{{\mathcal A}_B^s}^{d/s}\;, 
\qquad\forall\ n\ge0\;.}
$$
Here, $J$ is the ($\theta$-dependent) input parameter of {\bf ENRICH},
whereas the constant {$C_*$ is independent of $\theta$.}
\end{theorem}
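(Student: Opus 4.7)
The plan is to combine three ingredients: the cardinality overhead introduced by {\bf ENRICH}, the optimal cardinality of D\"orfler marking applied to the residual in the algebraic sparsity class, and the geometric contraction of Theorem~\ref{teo:four3}. First I would reduce the bound on $|\Lambda_n|$ to one on the pre-enrichment cardinalities $|\widetilde{\partial\Lambda}_k|$ produced by {\bf D\"ORFLER} inside {\bf E-D\"ORFLER}: since {\bf ENRICH} adds a ball of radius $J$ around each point of $\widetilde{\partial\Lambda}_k$, estimate \eqref{eq:estim-enrich} yields $|\partial\Lambda_k|\le \omega_d J^d |\widetilde{\partial\Lambda}_k|$, so that $|\Lambda_n|\le \omega_d J^d \sum_{k=0}^{n-1}|\widetilde{\partial\Lambda}_k|$.

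Next, to bound each $|\widetilde{\partial\Lambda}_k|$, I would invoke Assumption~\ref{ass:minimality} of minimality of the D\"orfler set together with the general estimate \eqref{eq:boundDorfler} specialised to ${\mathcal A}_B^s$; this gives $|\widetilde{\partial\Lambda}_k| \lsim (1-\theta^2)^{-d/(2s)} \big(\|r_k\|_{{\mathcal A}_B^s}/\|r_k\|\big)^{d/s}$. The crucial step is then Proposition~\ref{prop:unif-bound-res-alg}, which under the assumptions of Property~\ref{prop:inverse.alg} (and for $s<s^*=\eta_L-d$) provides the $k$-uniform bound $\|r_k\|_{{\mathcal A}_B^s}\lsim \|u\|_{{\mathcal A}_B^s}$. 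This uniform sparsity preservation of the residual is absent in the exponential case and is precisely what makes coarsening unnecessary here.

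To close the argument I would replace $\|r_k\|$ by $\|u-u_k\|$ via the lower bound in \eqref{eq:four.2.1}, and then use the contraction of Theorem~\ref{teo:four3}: since $\tvert u-u_n\tvert \le \bar\rho^{n-k}\tvert u-u_k\tvert$ for $0\le k\le n$, \eqref{eq:four.1bis} yields $\|u-u_k\|^{-1}\lsim \bar\rho^{n-k}\|u-u_n\|^{-1}$ with an implicit constant depending only on $\alpha_*,\alpha^*$. Combining all the inequalities gives
\[
 |\Lambda_n|\lsim J^d \,\|u\|_{{\mathcal A}_B^s}^{d/s}\, \|u-u_n\|^{-d/s} \sum_{k=0}^{n-1}\bar\rho^{(n-k)d/s},
\]
and since $\bar\rho<1$ by the hypothesis on $\theta$, the geometric series is bounded by $\bar\rho^{d/s}/(1-\bar\rho^{d/s})$ and can be absorbed into $C_*$; the $\theta$-independence of $C_*$ follows because, once $\theta$ is fixed so that $\bar\rho<1$, only $\alpha_*$, $\alpha^*$ and $s$ enter the remaining constants (the factor $J^d$ being exhibited separately).

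The main obstacle is Proposition~\ref{prop:unif-bound-res-alg} itself: the whole optimality argument for {\bf A-ADFOUR} rests on the fact that the Galerkin inverse maps ${\mathcal A}_B^s$ into itself with a uniform bound on the sparsity norm. Without this, the D\"orfler step could keep activating many more frequencies per iteration than the best $N$-term approximation of $u$ requires and optimality would break down, exactly as happens in the exponential setting of Sect.~\ref{sec:adfour-coarse}. Everything else is essentially bookkeeping: D\"orfler minimality, the uniform enrichment factor $\omega_d J^d$, and a geometric summation fed by the linear convergence of $\|u-u_n\|$.
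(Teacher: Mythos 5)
Your proposal follows the paper's proof step for step: reduction to the pre-enrichment sets via \eqref{eq:estim-enrich}, minimality of the D\"orfler set together with \eqref{eq:boundDorfler} specialised to ${\mathcal A}_B^s$, the uniform residual bound of Proposition \ref{prop:unif-bound-res-alg} (which is indeed the crucial ingredient, as you say), the lower bound in \eqref{eq:four.2.1}, and a geometric summation driven by the contraction of Theorem \ref{teo:four3}. Up to that point the bookkeeping is the same as in the paper.

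There is, however, a genuine gap at the one claim that distinguishes this theorem, namely that $C_*$ is independent of $\theta$. In your combined display the marking factor $(1-\theta^2)^{-d/(2s)}$ coming from $\bar\phi^{-1}\bigl(\sqrt{1-\theta^2}\,\Vert r_k\Vert/\Vert r_k\Vert_{{\mathcal A}_B^s}\bigr)$ has silently disappeared, and your justification of $\theta$-independence --- ``once $\theta$ is fixed so that $\bar\rho<1$, only $\alpha_*$, $\alpha^*$ and $s$ enter'' --- is not a proof of independence: absorbing $\theta$-dependent quantities into $C_*$ after fixing $\theta$ yields a constant that blows up like $(1-\theta^2)^{-d/(2s)}$ as $\theta\to1$, which is precisely the degeneracy the aggressive algorithm is designed to eliminate. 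The correct closing step, which is the point of the paper's proof, is to keep both $\theta$-dependent factors explicit and let them cancel: the geometric series is controlled by its leading term, $\sum_{k=0}^{n-1}\bar\rho^{(n-k)d/s}\lsim\bar\rho^{d/s}$, and since $\bar\rho=(\alpha^*/\alpha_*)^{1/2}\sqrt{1-\theta^2}$ this supplies exactly the factor $(1-\theta^2)^{+d/(2s)}$ that cancels the $(1-\theta^2)^{-d/(2s)}$ from the D\"orfler cardinality bound, leaving constants depending only on $\alpha_*$, $\alpha^*$, $d$, $s$, with all remaining $\theta$-dependence confined to the explicit factor $J^d$. (The residual factor $1/(1-\bar\rho^{d/s})$ hidden in the summation is harmless in the regime the theorem targets, where $\theta$ is close to $1$ and hence $\bar\rho$ is close to $0$.) Without exhibiting this cancellation, the stated theorem --- in particular the $\theta$-uniformity of $C_*$ --- is not proved.
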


\begin{proof}
At each iteration $n$, the set $\widetilde{\partial\Lambda}_n$ selected by {\bf D\"ORFLER} is minimal, hence by 
(\ref{eq:four.2.5.5bis}), (\ref{eq:nl.gen.2o}) and (\ref{eq:nlb.201}), one has
$$
|\widetilde{\partial\Lambda}_n| \leq 
\left(\sqrt{1-\theta^2} \, \Vert r_n \Vert\right)^{-d/s} \Vert r_n \Vert_{{\mathcal A}_B^s}^{d/s} \ + \ 1 \;.
$$
Using (\ref{eq:four.2.1bis}) and Proposition \ref{prop:unif-bound-res-alg}, this bound becomes
$$
|\widetilde{\partial\Lambda}_n| \lsim
\left(\sqrt{1-\theta^2} \, \tvert u-u_n \tvert\right)^{-d/s} \Vert u \Vert_{{\mathcal A}_B^s}^{d/s} \;.
$$
On the other hand, estimate (\ref{eq:estim-enrich}) for the procedure {\bf ENRICH} yields
$$
|\partial\Lambda_n| \lsim J^d \,
\left(\sqrt{1-\theta^2} \, \tvert u-u_n \tvert\right)^{-d/s} \Vert u \Vert_{{\mathcal A}_B^s}^{d/s} \;.
$$
Now, as in the proof of Corollary \ref{cor:optim1},
\begin{equation}\label{eq:optim14}
|\Lambda_n| \lsim J^d \, (1-\theta^2)^{-d/s} 
\left( \sum_{k=0}^{n-1} \tvert u-u_k \tvert^{-d/s}\right) \Vert u \Vert_{{\mathcal A}_B^s}^{d/s} \;.
\end{equation}
The contraction property of Theorem \ref{teo:four3} yields for $0 \leq k \leq n-1$
$$
\tvert u-u_n \tvert \le \bar{\rho}^{n-k} \tvert u-u_k \tvert \;,
$$
with $\bar{\rho}=C_0\sqrt{1-\theta^2}<1$ (see \ref{eq:aggr3}); thus,
$$
\sum_{k=0}^{n-1} \tvert u-u_k \tvert^{-d/s}
\le \tvert u-u_n \tvert^{-d/s}
\sum_{k=0}^{n-1} \bar{\rho}^{\frac{d}{s}(n-k)}\lsim
\bar{\rho}^{\frac{d}{s}} \tvert u-u_n \tvert^{-d/s} \lsim (1-\theta^2)^{d/s} \Vert u-u_n \Vert^{-d/s} \;.
$$
Substituting into (\ref{eq:optim14}), the powers of $1-\theta^2$
cancel out, and the {asserted estimate follows.}
\end{proof}

%%%%%%%%%%%%%%%%%%%%%%%%%%%%%%%%%%%%%%%%%%%%%%%%%%%%%%%%%%%%%%%%%%%%%%%%%%%%%%%%%%%%%%
\section{Optimality properties of adaptive algorithms: exponential case}\label{sec:adfour-coarse}
%%%%%%%%%%%%%%%%%%%%%%%%%%%%%%%%%%%%%%%%%%%%%%%%%%%%%%%%%%%%%%%%%%%%%%%%%%%%%%%%%%%%%%

From now on, let us assume that $u \in {\mathcal A}_G^{\eta,t}$ for some $\eta>0$ and $t \in (0,d]$. 
{Let us first} observe that none of the arguments that led to the complexity estimates of the 
previous section can be extended to the present situation.

For {\bf ADFOUR} with moderate D\"orfler marking, Corollary \ref{cor:optim1} in which $\phi^{-1}$ is replaced by
its logarithmic expression yields a bound for $|\Lambda_n|$ which is at least $n$ times larger than the optimal bound
$$
|\Lambda_n^{\rm best}| \leq \kappa \, \frac{\omega_d}{\eta^{d/t}}
\left( \log \frac{\Vert u \Vert_{{\mathcal A}_G^{\eta,t}}}{\varepsilon_n} \right)^{d/t}
$$
for the given accuracy $\varepsilon_n$ (see the proof of Proposition \ref{prop:opt-inner-loop} for more
details, in a similar situation). Manifestedly, the first cause of non-optimality is the crude bound (\ref{eq:optim11}), 
which in this case is no longer absorbed by the summation of a geometric series as in the algebraic case.

On the other hand, for {\bf A-ADFOUR} a sharp estimate of the increment $|\widetilde{\partial\Lambda}_n|$ is indeed 
used in the proof of Theorem \ref{T:aggressive-ADFOUR}, but this involves the sparsity class of the residual, which in the
exponential case may be different from that of the solution, as discussed in Sect. \ref{subsec:spars-res-exp}.

Incorporating a coarsening step in the algorithms allows us to avoid, at least in part, these drawbacks. For these reasons,
herafter we investigate the optimality properties of the two algorithms with coarsening presented in 
Sect. \ref{sec:plain-adapt-alg}

%--------------------------------------------------------------------------------------
\subsection{{C-ADFOUR:} ADFOUR with coarsening}
%--------------------------------------------------------------------------------------
Let us {now} discuss the complexity of Algorithm {\bf C-ADFOUR}, defined in Sect. \ref{subsec:coarse-adfour}. 
The following optimal result holds.

\begin{theorem}[{cardinality of $\Lambda_n$ for {\bf C-ADFOUR}}]\label{teo:optim.adcoars.1} 
Assume that the solution $u$ belongs to ${\mathcal A}^{\eta,t}_G$, for some $\eta >0$ and
$t \in (0,d]$. Then, there exists a constant $C>1$ such that the cardinality of the set 
$\Lambda_n$ of the active degrees of freedom produced by {\bf
  C-ADFOUR} satisfies the bound
\begin{equation}\label{Lambda-n}
|\Lambda_n| \leq \kappa \frac{\omega_d}{\eta^{d/t}}
\left( \log  \frac{\Vert u \Vert_{{\mathcal A}^{\eta,t}_G}}{\Vert u-u_n \Vert} + \log C \right)^{d/t} \;, 
\qquad\forall\ n\ge0.
\end{equation}
\end{theorem}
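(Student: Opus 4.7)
My plan is to exploit the coarsening step to bound $|\Lambda_n|$ directly by the cardinality of an optimal best $N$-term approximation of $u$, bypassing any use of the sparsity class of the residual. The argument splits naturally into three short steps.

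First, I would observe that for $n \geq 1$ the set $\Lambda_n$ is produced as the output of the call $\Lambda_n := {\bf COARSE}(u_{n-1,K_{n-1}}, \epsilon_{n-1})$ performed at the end of outer iteration $n-1$, with accuracy $\epsilon_{n-1} = \frac{1}{\sqrt{\alpha_*}}\|r_{n-1,K_{n-1}}\|$ and $K_{n-1}$ bounded independently of $n$ by Theorem \ref{teo:adfour-coarse-convergence}(i). Setting $w_{n-1} := u_{n-1,K_{n-1}}$, the a posteriori upper bound in \eqref{eq:four.2.1} yields $\|u - w_{n-1}\| \le \epsilon_{n-1}$, so the hypothesis of Theorem \ref{T:coarsening} is met.

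Second, I would invoke Theorem \ref{T:coarsening} with $v = u \in {\mathcal A}^{\eta,t}_G$, $w = w_{n-1}$ and accuracy $\epsilon_{n-1}$; since {\bf COARSE} is defined to return a set of minimal cardinality for the accuracy $2\epsilon_{n-1}$, Theorem \ref{T:coarsening} applies and delivers
$$
|\Lambda_n| \le \frac{\omega_d}{\eta^{d/t}}\left(\log \frac{\Vert u \Vert_{{\mathcal A}^{\eta,t}_G}}{\epsilon_{n-1}}\right)^{d/t} + 1.
$$

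Third, I would convert the right-hand side into the bound stated in \eqref{Lambda-n}. Property \ref{prop:cons-coarse} applied to the output of {\bf COARSE} gives $\tvert u - u_n \tvert \le 3\sqrt{\alpha^*}\,\epsilon_{n-1}$, and combining this with \eqref{eq:four.1bis} yields
$$
\epsilon_{n-1} \ge \tfrac{1}{3}\sqrt{\tfrac{\alpha_*}{\alpha^*}}\,\|u - u_n\|.
$$
Substituting into the previous estimate produces
$$
|\Lambda_n| \le \frac{\omega_d}{\eta^{d/t}}\left(\log \frac{C\,\Vert u \Vert_{{\mathcal A}^{\eta,t}_G}}{\|u - u_n\|}\right)^{d/t} + 1
$$
with $C = 3\sqrt{\alpha^*/\alpha_*}$; the trailing $+1$ is absorbed into the multiplicative factor $\kappa > 1$ exactly as in the passage from \eqref{eq:nl.gen.2o} to \eqref{eq:nl.gen.2}, which is legitimate provided $\|u-u_n\|$ is small enough (larger $n$ only makes the bound easier, and the case $n=0$ is trivial since $\Lambda_0 = \emptyset$).

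The main point is conceptual rather than technical: the argument works only because coarsening is invoked, which lets us measure cardinality against the sparsity class of the solution $u$ via Theorem \ref{T:coarsening}, rather than against the sparsity class of the residual $r(u_n)$. As illustrated in Sect. \ref{subsec:spars-res-exp}, in the exponential regime the latter can be strictly worse than the former, so any attempt at a D\"orfler-based increment estimate analogous to Proposition \ref{prop:optim1}, combined with a geometric series, would fail to reproduce the optimal logarithmic bound \eqref{Lambda-n}.
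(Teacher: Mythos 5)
Your proposal is correct and follows essentially the same route as the paper's proof: identify $\Lambda_n$ as the output of {\bf COARSE} with threshold $\epsilon_{n-1}=\alpha_*^{-1/2}\Vert r_{n-1,K_{n-1}}\Vert \ge \Vert u-u_{n-1,K_{n-1}}\Vert$, apply Theorem \ref{T:coarsening} with $v=u$, and then use Property \ref{prop:cons-coarse} together with \eqref{eq:four.1bis} to replace $\epsilon_{n-1}$ by $\Vert u-u_n\Vert$ up to the constant $C=3\sqrt{\alpha^*/\alpha_*}$. Your explicit handling of the trailing $+1$ via $\kappa$ and of the trivial case $n=0$ only spells out what the paper leaves implicit ("up to a shift in the index").
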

\begin{proof} Since each Galerkin approximation $u_{n+1}$ comes just after a call 
{$\Lambda_{n+1}:= {\bf COARSE} (u_{n,k+1}, \vare_n)$
with threshold 
$\varepsilon_n=\alpha_*^{-1/2}\|r_{n,k+1}\|\ge\|u-u_{n,k+1}\|$}, 
Theorem \ref{T:coarsening} yields
\[
|\Lambda_{n+1}| \leq \frac{\omega_d}{\ \eta^{d/t}} 
\left(\log\frac{\Vert u \Vert_{{\mathcal A}^{\eta,t}_G}}{\varepsilon_n}\right)^{d/t}\!\!\!+1 \;.
\]
On the other hand, (\ref{eq:four.1bis}) and Property \ref{prop:cons-coarse} yield
\begin{equation}\label{u-un+1}
{\Vert u-u_{n+1} \Vert \le \alpha_*^{-1/2} \tvert u-u_{n+1} \tvert
\leq 3 (\alpha^*/\alpha_*)^{1/2}\varepsilon_n.}
\end{equation}
{Since $n\ge-1$, this} gives the result, up to a shift in the
index.  \end{proof}

\medskip
Next, we investigate the optimality of each inner loop. We already know from 
Theorem \ref{teo:adfour-coarse-convergence} that the number $K_n$ of inner iterations is
bounded independently of $n$. So, we just estimate the growth of degrees of freedom when going from $k$ to $k+1$.
We only consider the case of a moderate D\"orfler marking, i.e., we 
{subject} $\theta$ to the condition stated in Lemma
\ref{lem:optim2} (since the case of $\theta$ close to 1 will be covered in the next subsection). The following result
holds.

\begin{proposition}[{cardinality of $\Lambda_{n,k}$ for {\bf C-ADFOUR}}]\label{prop:opt-inner-loop} 
Assume that $u \in {\mathcal A}^{\eta,t}_G$ for some $\eta >0$ and
$t \in (0,d]$, and that the marking parameter satisfies $\theta \in (0,\theta_*)$, where
$\theta_*=\sqrt{\frac{\alpha_*}{\alpha^*}}$. Then, there exist constants $C>1$ and $\bar{\eta} \in (0,\eta]$
such that, for all $n \geq 0$ and all $k=1, \dots , K_n$, one has
$$
|\Lambda_{n,k}| \leq \kappa \frac{\omega_d}{\bar{\eta}^{d/t}}
\left( \log  \frac{\Vert u \Vert_{{\mathcal A}^{\eta,t}_G}}{\Vert u-u_{n+1} \Vert} + \log C \right)^{d/t} \;. 
$$
\end{proposition}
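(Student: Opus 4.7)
The plan is to write
\[
|\Lambda_{n,k}| \leq |\Lambda_n| + \sum_{j=0}^{k-1} |\partial\Lambda_{n,j}|,
\]
and bound each contribution separately. The first term $|\Lambda_n|$ is already controlled by Theorem \ref{teo:optim.adcoars.1}; to recast that bound in terms of $\Vert u-u_{n+1}\Vert$ I would invoke the outer-loop contraction $\tvert u-u_{n+1}\tvert \le \rho \tvert u - u_n\tvert$ of Theorem \ref{teo:adfour-coarse-convergence} together with \eqref{eq:four.1bis}, yielding $\Vert u-u_n\Vert \ge c_0 \Vert u-u_{n+1}\Vert$ for some fixed $c_0>0$ and hence $\log\bigl(\Vert u\Vert_{\mathcal{A}_G^{\eta,t}}/\Vert u-u_n\Vert\bigr) \le \log\bigl(\Vert u\Vert_{\mathcal{A}_G^{\eta,t}}/\Vert u-u_{n+1}\Vert\bigr) + \log(1/c_0)$; the additive constant is absorbed into $\log C$.

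For the inner increments, each iteration of the inner loop is precisely an instance of the moderate D\"orfler mechanism of Sect. \ref{subsec:moderate-adfour-alg} applied to $(\Lambda_{n,j}, u_{n,j})$, so Proposition \ref{prop:optim1} with the exponential choice $\phi(N)=\mathrm{exp}(-\eta\omega_d^{-t/d}N^{t/d})$ (whose inverse is given by \eqref{eq:nlg.301}) yields
\[
|\partial\Lambda_{n,j}| \leq \kappa \,\phi^{-1}\!\left(\frac{\varepsilon_{n,j}}{\Vert u\Vert_{\mathcal{A}_G^{\eta,t}}}\right), \qquad \varepsilon_{n,j}^2 = \mu\, \tvert u - u_{n,j}\tvert^2.
\]

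The crux is to bound $\varepsilon_{n,j}$ from below by a fixed multiple of $\Vert u-u_{n+1}\Vert$, uniformly in $j<K_n$. I would first use the non-termination of the inner loop: for every $j<K_n$ one has $\Vert r_{n,j}\Vert > \sqrt{1-\theta^2}\,\Vert r_n\Vert$, which combined with the left inequality in \eqref{eq:four.2.1bis} gives $\tvert u-u_{n,j}\tvert \geq (\sqrt{1-\theta^2}/\sqrt{\alpha^*})\Vert r_n\Vert$. On the other hand, the proof of Theorem \ref{teo:adfour-coarse-convergence}(ii) shows $\tvert u-u_{n+1}\tvert \le 3(\sqrt{\alpha^*}/\alpha_*)\sqrt{1-\theta^2}\,\Vert r_n\Vert$, whence $\Vert r_n\Vert \ge c_1 \Vert u-u_{n+1}\Vert$ for a universal $c_1>0$. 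Chaining the two estimates and using \eqref{eq:four.1bis} one more time gives $\varepsilon_{n,j}\ge c_2 \Vert u-u_{n+1}\Vert$, and consequently
\[
|\partial\Lambda_{n,j}| \le \kappa\,\frac{\omega_d}{\eta^{d/t}}\left(\log\frac{\Vert u\Vert_{\mathcal{A}_G^{\eta,t}}}{\Vert u-u_{n+1}\Vert} + \log(1/c_2)\right)^{d/t}.
\]

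Finally, I would sum over $j=0,\ldots,k-1$. Since Theorem \ref{teo:adfour-coarse-convergence}(i) supplies a uniform upper bound $K_n\le\bar K$ on the inner iteration count, the sum contributes at most $\bar K$ copies of the right-hand side above. The factor $\bar K$ is absorbed into the prefactor via the elementary rescaling $\bar K\,\omega_d/\eta^{d/t} = \omega_d/\bar\eta^{d/t}$ with $\bar\eta := \eta/\bar K^{t/d} \in (0,\eta]$, and the result is combined with the already-controlled $|\Lambda_n|$ contribution (enlarging $C$ if necessary) to yield the claimed bound. The main obstacle is the chain of inequalities needed to connect the mid-inner-loop quantity $\varepsilon_{n,j}$ to the post-coarsening error $\Vert u-u_{n+1}\Vert$; once this is in place, the absorption of $\bar K$ into $\bar\eta$ and the merging of the two pieces are straightforward.
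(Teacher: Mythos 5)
Your proof is correct, and it follows the paper's skeleton (split $|\Lambda_{n,k}|$ into $|\Lambda_n|$ plus the inner increments, bound each $|\partial\Lambda_{n,j}|$ through Proposition \ref{prop:optim1} with the exponential $\phi$, use the uniform bound $K_n\lesssim 1$, and absorb the iteration count into $\bar\eta$), but it handles the crux step by a genuinely different device. The paper relates each $\varepsilon_{n,j}$ to the terminal inner accuracy via the inner contraction $\varepsilon_{n,K_n}\le\rho^{K_n-j}\varepsilon_{n,j}$ (its \eqref{eq:optim12bis}), which produces additive $(K_n-j)|\log\rho|$ terms; it then sums the $t/d$-th powers $|\partial\Lambda_{n,j}|^{t/d}$ using subadditivity (here $t\le d$ is used), collects an $O(K_n^2)|\log\rho|$ remainder, and finally invokes \eqref{eq:adgev.2}, \eqref{Lambda-n} and \eqref{u-un+1}, ending with $\bar\eta\le\eta/(1+K_n)$. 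You instead lower-bound every mid-loop accuracy directly: the non-termination test gives $\Vert r_{n,j}\Vert>\sqrt{1-\theta^2}\,\Vert r_n\Vert$ for $j<K_n$, and the estimate $\tvert u-u_{n+1}\tvert\le 3(\sqrt{\alpha^*}/\alpha_*)\sqrt{1-\theta^2}\,\Vert r_n\Vert$ from the proof of Theorem \ref{teo:adfour-coarse-convergence}(ii) lets the $\sqrt{1-\theta^2}$ factors cancel, so $\varepsilon_{n,j}\ge c_2\Vert u-u_{n+1}\Vert$ with $c_2$ independent of $\theta$ (your $c_1$ does depend on $\theta$, but that washes out). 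This avoids the $(K_n-j)|\log\rho|$ bookkeeping entirely, lets you sum raw cardinalities, and in fact yields the slightly milder degradation $\bar\eta\sim\eta/(1+\bar K)^{t/d}$ compared with the paper's $\eta/(1+K_n)$ when $t<d$. One small bookkeeping point: merging the $|\Lambda_n|$ piece with the summed increments cannot be done by only enlarging $C$ — you must also enlarge the prefactor, i.e. take $\bar\eta=\eta/(1+\bar K)^{t/d}$ rather than $\eta/\bar K^{t/d}$ — but this is exactly the adjustment you already anticipate and is immaterial.
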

\begin{proof} Each inner loop of {\bf C-ADFOUR} can be viewed as a truncated version of {\bf ADFOUR}; hence, the 
analysis of this algorithm given in Sect. \ref{subsec:moderate-adfour-alg} can be adapted to the exponential case.
In particular, for each increment $\partial\Lambda_{n,j}$ of degrees of freedom, Proposition \ref{prop:optim1} 
gives 
$$
|\partial\Lambda_{n,j}| \le \frac{\omega_d}{\ \eta^{d/t}} 
\left(\log\frac{\Vert u \Vert_{{\mathcal
        A}^{\eta,t}_G}}{\varepsilon_{n,j}}\right)^{d/t}\!\!\!+1 \; ,
{\qquad\forall\ 0\le j \le K_n.}
$$
Since,
{$\varepsilon_{n,K_n} \leq \rho^{K_n-j}\varepsilon_{n,j}$ 
by (\ref{eq:optim12bis}), it follows that}
$$
|\partial\Lambda_{n,j}| \le \frac{\omega_d}{\ \eta^{d/t}} 
\left(\log\frac{\Vert u \Vert_{{\mathcal A}^{\eta,t}_G}}{\varepsilon_{n,K_n}} + (K_n-j)|\log \rho| \right)^{d/t}\!\!\!+1 \;.
$$ 
Thus, recalling that $t \leq d$ by assumption, we have
\begin{eqnarray*}
|\Lambda_{n,k}|^{t/d} &\leq& |\Lambda_{n}|^{t/d} + \sum_{j=0}^{k-1} |\partial\Lambda_{n,j}|^{t/d} \\
&\leq& |\Lambda_{n}|^{t/d} + \kappa \frac{\omega_d^{t/d}}{\eta}
\left(k \log\frac{\Vert u \Vert_{{\mathcal A}^{\eta,t}_G}}{\varepsilon_{n,K_n}} + O(K_n^2) |\log \rho| \right) \;.
\end{eqnarray*}
{Combining  \eqref{eq:adgev.2}, \eqref{Lambda-n},
  and \eqref{u-un+1} with
$k \leq K_n \lsim 1$, we conclude the assertion with
$\bar\eta \le \eta/(1+K_n)$.}
\end{proof}

We remark that the previous result provides a complexity bound, relative to the sparsity class ${\mathcal A}^{\eta,t}_G$
of the solution, which is optimal with respect to the index $t$, 
{but} suboptimal with respect to the index $\bar{\eta}< \eta$.
%\bnew{(unless a single step is performed in each inner loop) {\bf Is this
% correct? It seems that $K_n\ge1$}}.

%------------------------------------------------------------------------------------
\subsection{{PC-ADFOUR:} Predictor/Corrector ADFOUR}
%------------------------------------------------------------------------------------
At last, we discuss the optimality of Algorithm {\bf PC-ADFOUR}, presented in the second part of
Sect. \ref{subsec:coarse-adfour}.

\begin{theorem}[{cardinality of {\bf PC-ADFOUR}}]\label{teo:pc-adfour}
Suppose that $u \in {\mathcal A}^{\eta,t}_G$, for some $\eta >0$ and $t \in (0,d]$. 
Then, there exists a constant $C>1$ such that the cardinality of the set 
$\Lambda_n$ of the active degrees of freedom produced by {\bf PC-ADFOUR} satisfies the bound 
$$
|\Lambda_n| \leq \kappa \frac{\omega_d}{\eta^{d/t}}
\left( \log  \frac{\Vert u \Vert_{{\mathcal A}^{\eta,t}_G}}{\Vert u-u_n \Vert} + \log C \right)^{d/t} \;, 
{\qquad\forall\ n\ge0.}
$$
If, in addition, the assumptions of Proposition \ref{prop:unif-bound-res-exp} are satisfied, then 
the cardinality of the intermediate sets $\widehat\Lambda_{n+1}$ activated in the predictor step
can be estimated as
$$
|\widehat\Lambda_{n+1}| \leq | \Lambda_n | +
\kappa J^d \frac{\omega_d^2}{\bar{\eta}^{d/\bar{t}}}
\left( \log  \frac{\Vert u \Vert_{{\mathcal A}^{\eta,t}_G}}{\Vert u-u_n \Vert} 
+ \big|\log\sqrt{1-\theta^2}\,\big| + \log C \right)^{d/\bar{t}} \;, 
{\qquad\forall\ n\ge0\;,}
$$
where $J$ is the input parameter of {\bf ENRICH}, and $\bar{\eta}\leq \eta$, $\bar{t}\leq t$ 
are the parameters which occur in the thesis of Proposition \ref{prop:unif-bound-res-exp}.
\end{theorem}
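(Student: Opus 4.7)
\medskip\noindent
\emph{Proof plan.} The two estimates are essentially independent: the bound on $|\Lambda_n|$ follows from the coarsening step performed at each iteration, while the bound on $|\widehat\Lambda_{n+1}|$ follows from the enrichment step together with the sparsity of the residual established in Proposition \ref{prop:unif-bound-res-exp}.

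First, I would handle the bound on $|\Lambda_n|$ by mimicking the argument in Theorem \ref{teo:optim.adcoars.1}. At the end of iteration $n$, the set $\Lambda_{n+1}$ is produced by the call $\mathbf{COARSE}(\widehat{u}_{n+1},\varepsilon_n)$ with $\varepsilon_n=\tfrac{1}{\alpha_*}\sqrt{1-\theta^2}\,\|r_n\|$. As already observed in the proof of Theorem \ref{teo:pc-adfour-convergence}, $\|u-\widehat u_{n+1}\|\le\varepsilon_n$, so Theorem \ref{T:coarsening} applies and yields
\[
|\Lambda_{n+1}|\le \kappa\,\frac{\omega_d}{\eta^{d/t}}\left(\log\frac{\|u\|_{\mathcal{A}_G^{\eta,t}}}{\varepsilon_n}\right)^{d/t}.
\]
To replace $\varepsilon_n$ by $\|u-u_{n+1}\|$ I would invoke Property \ref{prop:cons-coarse}: estimate \eqref{eq:def-coarse-ter} together with \eqref{eq:four.1bis} gives $\|u-u_{n+1}\|\le 3\sqrt{\alpha^*/\alpha_*}\,\varepsilon_n$, so $\varepsilon_n^{-1}\le 3\sqrt{\alpha^*/\alpha_*}\,\|u-u_{n+1}\|^{-1}$, and the logarithm absorbs the multiplicative constant into a $\log C$ term. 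A harmless index shift then yields the first claim.

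Second, for the predictor set $\widehat\Lambda_{n+1}=\Lambda_n\cup\widehat{\partial\Lambda}_n$, I would split
\[
|\widehat\Lambda_{n+1}|\le|\Lambda_n|+|\widehat{\partial\Lambda}_n|,
\]
and estimate $|\widehat{\partial\Lambda}_n|$ by unrolling the \textbf{E-D\"ORFLER} procedure: if $\widetilde{\partial\Lambda}_n=\mathbf{D\ddot ORFLER}(r_n,\theta)$, then \eqref{eq:estim-enrich} gives $|\widehat{\partial\Lambda}_n|\le\omega_d J^d\,|\widetilde{\partial\Lambda}_n|$. Under Assumption \ref{ass:minimality}, the set $\widetilde{\partial\Lambda}_n$ has minimal cardinality, so the general bound \eqref{eq:boundDorfler} combined with the explicit inverse \eqref{eq:nlg.301} of $\phi$ in the exponential case produces
\[
|\widetilde{\partial\Lambda}_n|\le\frac{\omega_d}{\bar\eta^{d/\bar t}}\left(\log\frac{\|r_n\|_{\mathcal{A}_G^{\bar\eta,\bar t}}}{\|r_n\|}+\bigl|\log\sqrt{1-\theta^2}\,\bigr|\right)^{d/\bar t}+1.
\]
Now Proposition \ref{prop:unif-bound-res-exp} supplies the sparsity of the residual, namely $r_n\in\mathcal{A}_G^{\bar\eta,\bar t}$ with $\|r_n\|_{\mathcal{A}_G^{\bar\eta,\bar t}}\lesssim\|u\|_{\mathcal{A}_G^{\eta,t}}$, while the a posteriori lower bound \eqref{eq:four.2.1} gives $\|r_n\|\ge\alpha_*\|u-u_n\|$; both substitutions contribute additive $\log C$ terms inside the parenthesis. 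Multiplying by $\omega_d J^d$ and absorbing the trailing $+1$ into the constant $\kappa>1$ (as is done throughout Section \ref{sec:complexity}) yields the claimed bound on $|\widehat\Lambda_{n+1}|$.

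The only delicate point is the invocation of Proposition \ref{prop:unif-bound-res-exp}, whose hypotheses on $\eta<\omega_d^{t/d(1+2t)}\bar\eta_L$ and $1\le d\le 10$ must be verified to guarantee that $r_n$ lies in some exponential sparsity class at all; the rest of the derivation is essentially an exercise in chaining the coarsening theorem, the D\"orfler-marking cardinality estimate, and the contraction property of the algorithm.
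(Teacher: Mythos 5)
Your proposal is correct and follows essentially the same route as the paper: the first bound is obtained exactly as in Theorem \ref{teo:optim.adcoars.1} by applying Theorem \ref{T:coarsening} with the threshold $\varepsilon_n=\tfrac1{\alpha_*}\sqrt{1-\theta^2}\,\Vert r_n\Vert$ and then using Property \ref{prop:cons-coarse} with \eqref{eq:four.1bis} plus an index shift, while the second bound chains the minimal-cardinality D\"orfler estimate for $r_n\in\mathcal{A}_G^{\bar\eta,\bar t}$ (Proposition \ref{prop:unif-bound-res-exp}), the {\bf ENRICH} count \eqref{eq:estim-enrich}, and \eqref{eq:four.2.1} to replace $\Vert r_n\Vert_{\mathcal{A}_G^{\bar\eta,\bar t}}$ and $\Vert r_n\Vert$ by $\Vert u\Vert_{\mathcal{A}_G^{\eta,t}}$ and $\Vert u-u_n\Vert$. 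This is precisely the paper's argument, with constants absorbed into $\log C$ and $\kappa$ in the same way.
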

\begin{proof} The proof of the first bound is the same as 
{that} of Theorem \ref{teo:optim.adcoars.1}.
Concerning the second bound, {we invoke Proposition
  \ref{prop:unif-bound-res-exp} to write
$r_n \in \mathcal{A}_G^{\bar\eta,\bar t}$ and  recall that 
$\|r_n-P_{\widetilde{\partial\Lambda_n}} r_n\|\le (1-\theta^2)^{1/2}\|r_n\|$
for each iteration $n$. This, combined with the minimality of
the set $\widetilde{\partial\Lambda}_n$ selected 
by {\bf D\"ORFLER}, yields}
$$
|\widetilde{\partial\Lambda}_n| \leq 
 \frac{\omega_d}{\bar{\eta}^{d/\bar{t}}}
\left( \log  \frac{\Vert r_n \Vert_{{\mathcal A}^{\bar{\eta},\bar{t}}_G}}{\sqrt{1-\theta^2}\Vert r_n \Vert} 
\right)^{d/\bar{t}} \!\! + 1\;.
$$
Estimate (\ref{eq:estim-enrich}) for {\bf ENRICH} yields
$$
|{\partial\Lambda}_n| \leq \kappa J^d \, \frac{\omega_d^2}{\bar{\eta}^{d/\bar{t}}}
\left( \log  \frac{\Vert r_n \Vert_{{\mathcal A}^{\bar{\eta},\bar{t}}_G}}{\sqrt{1-\theta^2}\Vert r_n \Vert} 
\right)^{d/\bar{t}} \;.
$$
Using {(\ref{eq:four.2.1})} and Proposition
\ref{prop:unif-bound-res-exp}, {this time to replace $r_n$ by $u$
and $u-u_n$,}
we obtain the {desired} result.  \end{proof}

We observe that in the case $\bar\eta<\eta$ and $\bar t< t$, 
the cardinalities $|\widehat\Lambda_{n+1}|$ and $| \Lambda_n|$ are not bounded by comparable quantities.
This looks like a non-optimal result, yet it appears to be intimately related to the fact that in general the residuals
belongs to a worse sparsity class than the solution. 

%-----------------------------------------------------------------------------------
\section*{Acknowledgements}
We wish to thank Dario Bini for providing us with Property 
\ref{prop:inverse.matrix-estimate}, 
and Paolo Tilli for insightful discussions on the exponential classes.\\
The first and the third author have been partially supported by the Italian research fund PRIN 2008 
``Analisi e sviluppo di metodi numerici avanzati per EDP''. The second author has been partially supported by NSF grants DMS-0807811 and DMS-1109325.

%-----------------------------------------------------------------------------------

\end{document}